\documentclass[11pt]{article}
\usepackage[T1]{fontenc}
\usepackage[utf8]{inputenc}
\usepackage{lmodern}
\usepackage{microtype}
\usepackage[margin=1in]{geometry}
\usepackage{amsmath,amssymb,amsthm,mathtools}
\usepackage{booktabs}
\usepackage{enumitem}
\usepackage{xcolor}
\usepackage{hyperref}
\usepackage{mathrsfs}

\hypersetup{colorlinks=true,linkcolor=blue,citecolor=blue,urlcolor=blue}

\newtheorem{theorem}{Theorem}[section]
\newtheorem{lemma}[theorem]{Lemma}
\newtheorem{proposition}[theorem]{Proposition}

\newtheorem{conjecture}[theorem]{Conjecture}

\newcommand{\R}{\mathbb{R}}
\newcommand{\tr}{\operatorname{tr}}
\newcommand{\Spec}{\operatorname{Spec}}
\newcommand{\rank}{\operatorname{rank}}
\newcommand{\im}{\operatorname{im}}
\newcommand{\Ker}{\operatorname{ker}}
\newcommand{\Span}{\operatorname{span}}
\newcommand{\supp}{\operatorname{supp}}
\newcommand{\bd}{\partial}
\newcommand{\ip}[2]{\left\langle #1,#2\right\rangle}
\newcommand{\norm}[1]{\left\lVert #1\right\rVert}
\newcommand{\diag}{\operatorname{diag}}
\newcommand{\one}{\mathbf 1}

\newcommand{\calC}{\mathcal{C}}
\newcommand{\calR}{\mathcal{R}}
\newcommand{\calF}{\mathcal{F}}
\newcommand{\calK}{\mathcal{K}}
\newcommand{\calN}{\mathcal{N}}
\newcommand{\scrH}{\mathscr{H}}
\newcommand{\scrU}{\mathscr{U}}
\newcommand{\scrV}{\mathscr{V}}
\newcommand{\scrE}{\mathscr{E}}

\begin{document}

\title{\bf The Duval--Reiner Conjecture:  Counterexamples and the  Second Partial-Sum Inequality}

\author{
Jing Huang\footnote{Email address:
jhuangmath@gzhu.edu.cn
}}

\date{\small
School of Mathematics and Information Science, Guangzhou University, Guangzhou,  510006, China\\
}

\maketitle

\begin{abstract}
Let \(F\subseteq\binom{V}{q}\) be a \(q\)-uniform family on a finite
vertex set \(V\). Write \(s_r(F)\) for the sum of the \(r\) largest
eigenvalues of its simplicial up-Laplacian and \(d_F(v)\) for the
degree of \(v\in V\). Then $D_r(F)=\sum_{v\in V}\min\{d_F(v),r\}$
is the \(r\)-th partial sum of the conjugate degree sequence of \(F\).
The majorization assertion in the Duval--Reiner conjecture
[Trans. Amer. Math. Soc., 2002]
states that \(s_r(F)\le D_r(F)\) for every \(q\)-uniform family \(F\)
and every \(r\ge1\).
We disprove this assertion in two complementary senses: for every
\(r\ge5\), there is a strict counterexample at index \(r\) in some
uniformity, while every uniformity \(q\ge3\) admits a strict
counterexample at some index \(r\ge5\).
In contrast, we prove the universal inequality
\(s_2(F)\le D_2(F)\) and classify all equality cases.
The counterexamples are obtained from two \(3\)-uniform seeds with
explicitly computed characteristic polynomials through
defect-preserving ridge-whiskering and set-complement duality.
For the second partial sum, core completion reduces the problem to the
boundary matrix of a complete simplex, where Ky Fan variational and
compression arguments yield both the inequality and its equality
classification.

\medskip
{\bf Key words}: Duval--Reiner conjecture, simplicial Laplacian, spectral majorization,
conjugate degree sequence, equality classification.

\medskip
{\bf MSC}:  05E45, 05C65, 05C50, 15A42.

\end{abstract}

\section{Introduction}
\label{sec:introduction}
Let \(V\) be a finite set and let \(q\geq1\) be an integer. We write
\(\binom{V}{q}\) for the collection of all \(q\)-element subsets of \(V\).
A subfamily \(F\subseteq\binom{V}{q}\) is called a
\emph{\(q\)-uniform family}; its members are called \emph{facets}. Its
downward closure $\langle F\rangle=\{\tau\subseteq V:\tau\subseteq\sigma
\text{ for some }\sigma\in F\}$
is the \emph{simplicial complex} generated by \(F\).  For a face \(\tau\in\langle F\rangle\), define $\dim\tau=|\tau|-1.$
A \emph{ridge} is a face of dimension \(q-2\), equivalently a
\((q-1)\)-element subset of a facet.
Fix a linear order on \(V\).  For
\(\sigma=\{v_0<\cdots<v_{q-1}\}\), define the signed boundary by
\[
	\bd e_\sigma
	=
	\sum_{j=0}^{q-1}(-1)^j
	e_{\sigma\setminus\{v_j\}}.
\]
Let \(B_F\) be the corresponding boundary matrix, with columns indexed by
the facets of \(F\) and rows indexed by the ridges occurring in those
facets.  For \(q=1\), the unique possible row is indexed by
\(\varnothing\).  Let
$
L_F^+=B_FB_F^{\mathsf T}
$
and
$
M_F=B_F^{\mathsf T}B_F.
$
Thus \(L_F^+\) is the simplicial up-Laplacian, whereas \(M_F\) is its
facet Gram matrix. Both matrices are real symmetric positive semidefinite, and they have the same positive eigenvalues, with multiplicity.
List them in weakly decreasing order and pad the list with zeros when
necessary:
\[
	\lambda_1(F)\geq\lambda_2(F)\geq\cdots\geq0,
	\qquad
	s_r(F)=\sum_{i=1}^r\lambda_i(F).
\]
Thus \(s_r(F)\) is the \(r\)-th Ky Fan sum of the simplicial
up-Laplacian.
For \(v\in V\), let $d_F(v)=|\{\sigma\in F:v\in\sigma\}|$
be its vertex degree.  The conjugate vertex-degree sequence and its \(r\)-th partial sum are
\[
	d_i^T(F)=|\{v\in V:d_F(v)\geq i\}|,
	\qquad
	D_r(F)=\sum_{i=1}^r d_i^T(F)
	=
	\sum_{v\in V}\min\{d_F(v),r\}.
\]
Both sequences have total sum \(q|F|\):
\[
	\sum_i\lambda_i(F)
	=
	\tr(L_F^+)
	=
	q|F|
	=
	\sum_{v\in V}d_F(v)
	=
	\sum_i d_i^T(F).
\]

This framework extends the graph setting. When \(q=2\), \(F\) is the
edge set of a finite simple graph, \(B_F\) is an oriented vertex--edge
incidence matrix, and \(L_F^+=B_FB_F^{\mathsf T}\) is its usual
Laplacian with isolated vertices omitted. The graph Laplacian goes back
to Kirchhoff's work on electrical networks \cite{Kirchhoff1847}, while
Eckmann developed the corresponding finite-dimensional Hodge theory for
simplicial complexes \cite{Eckmann1944}.
Beyond these origins, their spectral properties are of considerable combinatorial interest.
For graphs, Grone and Merris conjectured
that the graph Laplacian spectrum is majorized by the conjugate degree
sequence \cite{GroneMerris1994}; Duval and Reiner subsequently formulated a higher-dimensional generalization
for
uniform simplicial families
\cite[Conjecture~1.2]{DuvalReiner2002}.
To state their conjecture in full, recall that a \(q\)-family on
\(V=\{v_1<\cdots<v_n\}\) is \emph{shifted} if, whenever
\(1\le i_1<\cdots<i_q\le n\) and
\(1\le j_1<\cdots<j_q\le n\) satisfy
\(i_\ell\le j_\ell\) for every \(1\le\ell\le q\), the inclusion
$
\{v_{j_1},\ldots,v_{j_q}\}\in F
$
implies
$
\{v_{i_1},\ldots,v_{i_q}\}\in F.
$
A family is \emph{isomorphic to a shifted family} if some relabeling of
its vertices makes it shifted.
\begin{conjecture}
\label{conj:duval-reiner}
For every \(q\)-uniform family \(F\) and every \(r\ge1\), $s_r(F)\leq D_r(F).$
Moreover, equality holds simultaneously for all \(r\ge1\) if and only if
\(F\) is isomorphic to a shifted family.
\end{conjecture}

Conceptually, the conjecture asks whether vertex-incidence data control
every leading Ky Fan sum of a higher-dimensional boundary operator, with
shifted families as the exact model.
Some recent papers use the name ``Duval--Reiner conjecture'' for the
majorization assertion alone \cite{HanLuWang2026,Lew2025}. In the
original formulation, however, the equality clause concerns coincidence
of the entire spectral and conjugate vertex-degree sequences, rather
than equality of a single partial sum \(s_r(F)=D_r(F)\).
Beyond these nuances, this framework has influenced proposed generalizations of Brouwer's
spectral-sum conjecture to simplicial complexes \cite{Abebe12,Abebe}.

Duval and Reiner proved the first partial-sum inequality in every
uniformity and the second in uniformity \(q=2\); they also established
invariance under set and family complementation and preservation under
joins \cite{DuvalReiner2002}. Duval later extended the exact
shifted spectral identity to pairs of shifted complexes
\cite{Duval2005}. In the graph case, related results treated trees
\cite{Stephen2007}, one-regular semibipartite graphs \cite{Katz2005},
cographs \cite{BapatLalPati2008}, and near-threshold graphs
\cite{Kirkland2009}. Stephen also reduced the relative graph-pair
majorization problem to the ordinary graph case \cite{Stephen2007}, and
Bai's proof of the Grone--Merris conjecture completed the majorization
theorem for all graphs \cite{Bai2011}.

For higher uniformities, Fan, Wu, and Wang obtained local degree bounds
and balancedness criteria at the first index
\cite{FanWuWang2025}. Lew proved a general Ky Fan bound in terms of ridge
degrees and verified all the Duval--Reiner inequalities for
\(q\)-partite \(q\)-families \cite{Lew2025}. Zhang and Fan refined the
universal first-index bound and established homological and local
equality criteria \cite{ZhangFan2026}. Most recently, Han, Lu, and Wang
proved the second partial-sum inequality for arbitrary \(3\)-families
\cite{HanLuWang2026}. Thus, before the present work, the second
inequality remained open for unrestricted \(q\)-families with \(q\ge4\),
and no complete majorization result was known in any unrestricted
uniformity \(q\ge3\).

Our first main result gives strict counterexamples in two complementary
quantifier regimes, thereby disproving the majorization assertion and
hence the Duval--Reiner conjecture.
\begin{theorem}
\label{thm:counterexamples}
For every integer \(r\ge5\), there exist an integer \(q\ge3\), a finite
set \(V\), and a family \(F\subseteq\binom{V}{q}\) such that
$s_r(F)> D_r(F).$
Moreover, every integer \(q\ge3\) admits such a counterexample at some
index \(r\ge5\). In both assertions, \(F\) may be chosen so that
$\bigcap_{\sigma\in F}\sigma=\varnothing.$
\end{theorem}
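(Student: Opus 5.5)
The plan follows the abstract: build two explicit $3$-uniform seed families, then propagate them to all required indices and uniformities with two defect-preserving operations. Write $\delta_r(F)=s_r(F)-D_r(F)$ for the defect at index $r$.

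\textbf{Step 1 (seeds).} I would find two $3$-uniform families $F_1,F_2$ on small vertex sets, say at most $10$ vertices, with $\delta_5(F_1)>0$ and $\delta_6(F_2)>0$. One seed is used for odd offsets and the other for even offsets, so that together they cover every residue the whiskering step needs. Natural candidates are families whose up-Laplacian has a few large eigenvalues concentrated on a dense core, while the degrees are capped by $\min\{d,r\}$. An example would be a family containing many facets through a common ridge-rich core, but with every vertex of degree only slightly above $r$.

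For each seed I would compute the characteristic polynomial of $M_F$ exactly and factor it over $\mathbb Q$. Then I would certify $s_r>D_r$ by a rational lower bound on the top $r$ roots, for example by sign changes of the factors at rational points. $D_r$ itself is an integer read off from the degrees. I would also arrange $\bigcap_{\sigma\in F_i}\sigma=\varnothing$.

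\textbf{Step 2 (ridge-whiskering).} Attach a new facet $\rho\cup\{w\}$ to a ridge $\rho$ of $F$, where $w$ is a new vertex. Alternatively, attach to a carefully chosen configuration of ridges so that the vertex degrees shift uniformly. The goal is a lemma of the following form:
\[
\lambda_{i}(F')\ge\lambda_i(F)\ \text{ for all } i,\qquad \lambda_{r+1}(F')\ \text{large enough},
\]
so that $s_{r+1}(F')-D_{r+1}(F')\ge s_r(F)-D_r(F)$.

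The eigenvalue part follows from interlacing: $M_{F'}$ contains $M_F$ as a principal submatrix, so its top eigenvalues do not decrease. The new vertex has degree $1$ and adds exactly $1$ to $D_r$. Each vertex of $\rho$ gains a degree, but this raises $D_r$ only when that vertex's degree was below $r$.

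The cleanest version probably uses a ``defect-preserving'' whisker. It raises the index by one, adds one to the trace on both sides, and uses Ky Fan subadditivity in reverse via an explicit test subspace. Iterating from the two seeds gives every $r\ge5$ with $q=3$. That already proves the first assertion, with room to spare.

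\textbf{Step 3 (other uniformities).} Use set-complement duality, which Duval and Reiner proved: replacing each facet $\sigma$ by $V\setminus\sigma$ gives a $(|V|-q)$-family. Its spectrum is the old one shifted in a controlled way, and $D_r$ transforms compatibly, so the defect is preserved at a related index. Starting from whiskered $3$-uniform examples on $n$ vertices, we get counterexamples in uniformity $n-3$. Since whiskering adds vertices one at a time, $n$ ranges over all large integers, which covers every large $q$. The remaining small $q$ (say $q=4,5,\dots$ below that threshold) would be handled by a join or cone-free padding. Coning is not allowed because it would create a common vertex, so I would use the join with a single disjoint facet, for which Duval and Reiner's join preservation controls both sides.

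Emptiness of the intersection must be checked to persist through each operation. This holds automatically for whiskering. For complements it needs every vertex to lie outside some facet's complement, i.e.\ inside some facet, which holds because isolated vertices are excluded.

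\textbf{Main obstacle.} The hardest part is Step 2: proving that whiskering does not lose defect. In particular, the new eigenvalue must be at least the increment in $D_{r+1}$, which requires an explicit Rayleigh-quotient test vector rather than plain interlacing. The exact seed computations are tedious but routine.
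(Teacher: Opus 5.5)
\textbf{There is a genuine gap.} Step 1 is where the whole theorem lives, and you do not carry it out. The statement is existential, so a proof must exhibit explicit families and certify $s_r>D_r$; you only describe heuristically what a seed might look like. The violations are tiny: in the paper the defect is $d_*=\theta_4+\theta_5-12\approx2\times10^{-4}$, certified from exact characteristic polynomials and a root-separation identity. No heuristic can replace an explicit, exactly verified example. Worse, your plan needs a $3$-uniform seed with $\delta_6>0$, and nothing guarantees one exists. The paper's two seeds give $3$-uniform violations only at $r=5$ and $r\in\{8,9\}$. The paper reaches $r=6$ only in uniformity $4$, by complementing the 15-facet seed at index $9$ via $\Delta_{f-r}(F^\star)=\Delta_r(F)$, and it explicitly does not claim violations for every pair $(q,r)$. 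Your conclusion that every $r\ge5$ occurs with $q=3$ is therefore stronger than the theorem and unsupported.

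Step 2 is stated only as a goal, and in this generality it is false. For any added facet $\sigma$, Cauchy interlacing together with $\tr M_{F'}=\tr M_F+q$ gives $s_{r+1}(F')\ge s_r(F)+q$; note the trace grows by $q$, not by one. On the other side, $D_{r+1}(F')=D_r(F)+q+|\{v\notin\sigma:d_F(v)\ge r+1\}|$, because each vertex of $\sigma$ contributes exactly $1$ whatever its degree. So an index shift preserves the defect when every vertex of degree $>r$ lies in $\sigma$. Otherwise you lose an integer that only the uncontrolled slack $\sum_{i>r}(\lambda_i(F)-\lambda_{i+1}(F'))$ could repay. The 16-facet seed has five vertices of degree $\ge6$, so this criterion cannot be met at $r=5$. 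Whiskers on the ridge $\{2,3\}$ of $\calF_{15}$ do satisfy it and give $3$-uniform violations at every $r\ge8$, but $r=6,7$ stay uncovered.

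The paper uses whiskering differently. It whiskers at the fixed index $8$, using the eigenvector condition $M_F\mathbf y_\eta=(d_F(\eta)+q-1)\mathbf y_\eta$, so the top eigenvalue rises by exactly $t$ to pay for the $t$ new degree-one vertices. Complementation then moves the index to $(15+t)-8=7+t$ in uniformity $t+4$, which is what ties $r$ to $q$. Finally, your small-$q$ fallback breaks the very condition it was meant to protect. The join of $F$ with a single disjoint facet $\tau$ is $\{\sigma\cup\tau:\sigma\in F\}$, an iterated cone, so $\tau\subseteq\bigcap_{\sigma\in F}(\sigma\cup\tau)$ and the intersection is nonempty. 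With seven-vertex seeds the fallback is also unnecessary, since the complements of $\calF_{15}(t)$ already realize every $q\ge4$.
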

Exact characteristic polynomials certify two
\(3\)-uniform counterexample seeds, whose positive defects are propagated
by ridge-whiskering and set-complement duality.
The order of the quantifiers is essential: the theorem does not assert
the existence of a counterexample for every pair \((q,r)\) with
\(q\ge3\) and \(r\ge5\). The empty-intersection condition rules out
counterexamples obtained merely by coning a lower-uniformity example.
Our second main result establishes the second partial-sum inequality in
every uniformity and gives a complete classification of its equality
cases.
To formulate the equality conditions, let
\[
P(F)=\{v\in V:d_F(v)=1\},
\qquad
Q(F)=\{v\in V:d_F(v)\ge2\}
\]
be the pendant and core vertex sets. For \(q\ge2\), put
\[
Q=Q(F),\qquad
m=|Q|,\qquad
\calK=F\cap\binom Qq,\qquad
\calN=\binom Qq\setminus\calK,
\]
and
\[
\calC=
\left\{
\rho\in\binom Q{q-1}:
\rho\cup\{z\}\in F
\text{ for some }z\in P(F)
\right\}.
\]
Let \(\bd_Q\) be the signed boundary matrix of \(\binom Qq\), and, for
\(\mathcal A\subseteq\binom Qq\), let \(\bd_{\mathcal A}\) be the column
submatrix of \(\bd_Q\) indexed by \(\mathcal A\).

\begin{theorem}
	\label{thm:main-inequality}
	For every integer \(q\ge1\), every finite set \(V\), and every family
\(F\subseteq\binom{V}{q}\), we have $s_2(F)\le D_2(F).$
	After isolated vertices are deleted, equality holds if and only if at
	least one of the following conditions is satisfied.
	\begin{enumerate}[label=\textup{(\roman*)},leftmargin=*]
		\item \(q=1\), with \(F\) arbitrary.

		\item \(q\geq2\) and \(F\) has at most two facets.

		\item \(q\geq2\), \(m=q-1\), and \( F=\{Q\cup\{z\}:z\in Z\} \)  for some \(Z\subseteq V\setminus Q\) with \(|Z|\geq2\).

		\item \(q\ge2\), every facet of \(F\) contains at most one pendant
		vertex, and all of the following hold:
		\begin{enumerate}[label=\textup{(\alph*)},leftmargin=2.4em]
			\item \(m\ge q+1\), \(|\calC|\le2\), and
			\(
				\rank_{\R}\bd_{\calN}
				\le\binom{m-1}{q-1}-2;
			\)
			\item \(|\rho\cap\gamma|\le q-3\) for all distinct
			\(\rho,\gamma\in\calC\);
			\item \(\rho\cup\{x\}\in\calK\) for every
			\(\rho\in\calC\) and \(x\in Q\setminus\rho\).
		\end{enumerate}
	\end{enumerate}
\end{theorem}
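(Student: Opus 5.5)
The plan follows the route announced in the abstract. First, a \emph{core completion} reduces the problem to the boundary matrix of the complete simplex on \(Q\). Then a Ky Fan variational argument on \(M_F\) gives the inequality. Equality is read off from the cases where each estimate in the chain is tight.

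\textbf{Trivial cases and bookkeeping.} For \(q=1\), \(B_F\) is a single row of ones, so \(\lambda_1=|F|\) and \(s_2=|F|=D_2\); this gives (i). For \(q\ge2\) write \(D_2(F)=|P(F)|+2m\). If \(|F|\le2\), then \(s_2=\tr M_F=q|F|=D_2\), since every vertex has degree at most \(2\); this gives (ii).

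\textbf{Core completion.} Each pendant vertex \(z\) lies in exactly one facet \(\sigma\), and the facet \(\sigma\) has the shape \(\rho\cup\{z\}\) only when \(\sigma\) contains one pendant vertex. I would show that \(s_2\) can only increase under a monotone modification that makes the family ``pendant-free plus whiskers.''
\begin{enumerate}[label=(\arabic*)]
\item Facets with \(k\ge2\) pendant vertices are handled first. Their boundary rows on ridges containing a pendant vertex are private to that column, so the column norm is \(q\) while the coupling to the rest passes only through the ridges inside \(\sigma\cap Q\). A \(2\times2\) compression bound then gives a deficit relative to their contribution of \(k\) to \(D_2\).
\item The remaining whiskers contribute a block structure: a pendant facet \(\rho\cup\{z\}\) has one private ridge \(\rho\), and otherwise acts like the column of \(\rho\cup\{x\}\) with \(x\) replaced by a private direction.
\item Replace \(\calK\) by all of \(\binom Qq\), i.e.\ add \(\calN\). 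Since \(M\) is a Gram matrix, adding columns only increases Ky Fan sums; more precisely, the largest two eigenvalues of \(B B^{\mathsf T}\) are monotone in the column set.
\end{enumerate}
This reduces the claim to bounding \(s_2\) of the matrix \([\bd_Q\mid W]\), where \(W\) holds the whisker columns indexed by \(\calC\). The target bound is \(2m+|P|\).

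\textbf{Ky Fan step.} Use \(s_2=\max\tr(U^{\mathsf T}LU)\) over orthonormal pairs \(U\). The spectrum of \(\bd_Q\bd_Q^{\mathsf T}\) is known: the up-Laplacian of the full simplex on \(m\) vertices has eigenvalue \(m\) on the image of \(\bd\) and \(0\) on its complement. Each whisker column \(w_\rho\) equals \(e_\rho\) plus the coordinates on the ridges through the pendant vertex. Projecting onto the ridges inside \(Q\), the matrix becomes \(\bd_Q\bd_Q^{\mathsf T}+\sum_{\rho\in\calC}e_\rho e_\rho^{\mathsf T}\), together with private blocks, each of which is a \(q\)-dimensional rank-one piece. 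A compression and Cauchy interlacing estimate bounds the top two eigenvalues by \(m+1\) each, or by \(m+\) a whisker contribution. Combined with the fact that each private block contributes at most its own trace \(q\), split as \(1\) counted in \(P\) and \(q-1\) absorbed, this yields \(s_2\le 2m+|P|\). When \(m\le q-1\), only the configuration (iii) with \(m=q-1\) can occur. There the matrix is a star of whiskers sharing the ridge \(Q\), the top eigenvalue is \(|Z|+(q-1)\) up to checking, and the bound is tight.

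\textbf{Equality.} Tightness forces each step to be tight.
\begin{itemize}
\item Step (1) forces at most one pendant vertex per facet.
\item The top two eigenvectors must lie where both the \(\bd_Q\)-image eigenvalue \(m\) and the whisker boosts are attained. This requires \(|\calC|\le2\); when there are two whiskers, it also requires the two unit vectors \(e_\rho,e_\gamma\) to stay orthogonal after projection, which is condition (b), \(|\rho\cap\gamma|\le q-3\).
\item The completion step must lose nothing. The added columns \(\bd_\calN\) must be orthogonal to the optimal 2-space, which requires the coboundary stars of each \(\rho\in\calC\) to lie in \(\calK\) (condition (c)). The 2-dimensional optimizer must also survive in \(\im\bd_\calK\), which is the rank condition \(\rank\bd_\calN\le\binom{m-1}{q-1}-2\) in (a), since \(\rank\bd_Q=\binom{m-1}{q-1}\).
\end{itemize}
The converse is a direct construction of two orthonormal vectors attaining \(2m+|P|\).

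\textbf{Main obstacle.} I expect the hardest part to be step (1), showing that multi-pendant facets and the whisker coupling lose enough, sharply enough to also pin down the equality case. I would attempt it via a Schur-complement bound on the \(2\times2\) Ky Fan compression.
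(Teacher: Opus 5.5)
There is a genuine gap: the central bound \(s_2(\widehat F)\le 2m+p\) in the high-core case is never established.

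\textbf{The ridge-space decomposition fails.} Your picture of \([\bd_Q\mid W][\bd_Q\mid W]^{\mathsf T}\) as \(\bd_Q\bd_Q^{\mathsf T}+\sum_{\rho\in\calC}e_\rho e_\rho^{\mathsf T}\) ``together with private blocks'' is not a decomposition of that matrix. A whisker column is \(\pm e_\rho+\mathbf u_z\), where \(\mathbf u_z\) is supported on the \(q-1\) ridges through \(z\). Those ridges are the private ones; \(\rho\) is the shared one. Each rank-one term therefore carries cross terms \(\pm(e_\rho\mathbf u_z^{\mathsf T}+\mathbf u_ze_\rho^{\mathsf T})\), and each \(\rho\) occurs \(b_\rho\) times. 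These cross terms are what create the eigenvalue \(m+b_\rho\); already for one ridge carrying \(b\) whiskers, \(\lambda_1=m+b\). Interlacing for compressions of a coupled matrix gives no upper bound for the whole matrix.

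\textbf{Bounding eigenvalues one at a time is not enough.} Bounding the top two eigenvalues separately cannot yield \(\lambda_1+\lambda_2\le 2m+p\) when \(|\calC|\ge3\). What is needed is a bound on the total positive excess, \(\sum_\lambda(\lambda-m)_+\le p_1\le p\), followed by \(\lambda\le m+(\lambda-m)_+\).

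\textbf{How the paper gets this bound.} It works in facet space and splits \(M_{\widehat F}\) orthogonally:
\begin{itemize}
\item heavy facets give a block \(qI\);
\item differences of whiskers on a common \(\rho\) give a \((q-1)\)-eigenspace;
\item \(\Ker\bd_Q\) gives \(0\);
\item what remains is \(mI+T_F\) on \(\scrH_Q\oplus\Span\{\boldsymbol{\ell}_\rho:\rho\in\calC\}\), with coupling \(e_\rho\mapsto\sqrt{b_\rho}\,\mathbf y_\rho\).
\end{itemize}
Writing \(T_F=\sum_\rho T_\rho\), each \(T_\rho\) has nonzero eigenvalues only \(b_\rho\) and \(-(m-q+1)\). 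Subadditivity of \(\tr((\cdot)_+)\) then gives \(\tr((T_F)_+)\le p_1\). This is the rigorous form of your ``\(1\) counted in \(P\), \(q-1\) absorbed'' heuristic. Its equality case, pairwise orthogonality of the \(\mathbf y_\rho\), yields condition (b) and the explicit eigenspaces \(\scrE_{>m},\scrE_{=m}\).

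\textbf{Consequences for your equality analysis.} Without those eigenspaces, the equality argument has nothing concrete to test. ``The optimizer must survive'' must become: some maximizing \(2\)-space of \(M_{\widehat F}\) is supported on \(F\). That translates to \(\dim\scrV_{\calK}\ge2\) together with \(\supp\mathbf y_\rho\subseteq\calK\). In ridge space the relevant space is \(\im\bd_Q\cap\Ker\bd_{\calN}^{\mathsf T}\), of dimension \(\binom{m-1}{q-1}-\rank\bd_{\calN}\). It is not \(\im\bd_{\calK}\), whose rank can be larger: for \(K_4\) minus an edge it is \(3\) against \(2\).

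\textbf{The low-core inequality is not addressed.} When \(m\le q-1\), completion adds nothing, and such families go well beyond configuration (iii). Examples are many pairwise ridge-disjoint facets with \(m\le q-2\), or a star over \(Q\) mixed with heavy facets when \(m=q-1\). The paper handles these directly. It uses \(M_F=qI_n\) when \(m\le q-2\). For \(m=q-1\) it uses the explicit spectrum \(b+m\), \(m+1\) (multiplicity \(e\)), \(m\) (multiplicity \(b-1\)). Both are compared with the pendant count.

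\textbf{Your ``main obstacle'' is trivial.} If \(\sigma\) has two pendant vertices, every \((q-1)\)-subset of \(\sigma\) contains one. So \(\sigma\) shares no ridge with any other facet, and \(e_\sigma\) is an exact \(q\)-eigenvector. There is no coupling through ridges inside \(\sigma\cap Q\), since \(|\sigma\cap Q|\le q-2\), and no Schur complement is needed.
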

This theorem classifies equality only at \(r=2\), not equality of the
full sequences in Conjecture~\ref{conj:duval-reiner}; notably,
case~\textup{(iv)} allows \(\calN\neq\varnothing\), so equality does not
require a complete core. For \(r=2\), core
completion and Ky Fan's variational principle reduce the spectral problem
to the complete-simplex boundary block; equality is governed jointly by
the rank of the missing-core boundary matrix and the configuration of the
pendant-supporting ridges.

Since the inequalities at \(r=1,2\) hold in every uniformity, whereas
Theorem~\ref{thm:counterexamples} provides a counterexample at every
index \(r\ge5\), the only unresolved universal majorization inequalities
are $s_3(F)\le D_3(F)$ and $s_4(F)\le D_4(F)$
for every finite uniform family \(F\).

The paper is organized as follows.
Section~\ref{sec:notation} develops the spectral and boundary-map
notation and the required matrix tools.
Section~\ref{sec:seeds} proves Theorem~\ref{thm:counterexamples} from two
\(3\)-uniform seeds using ridge-whiskering and set-complement duality.
Section~\ref{sec:core-inequality} proves
Theorem~\ref{thm:main-inequality} and classifies all equality cases.
The appendix records the seed matrices and verifies their characteristic
polynomials by exact arithmetic.
	
	\section{Preliminary}\label{sec:notation}

We retain the notation from the Introduction. For every nonempty
oriented face \(\sigma=\{v_0<\cdots<v_k\}\), extend the boundary
convention by
\[
\bd e_\sigma
=
\sum_{i=0}^k(-1)^i e_{\sigma\setminus\{v_i\}},
\qquad
\bd e_\varnothing=0.
\]
For faces \(\rho\) and \(\sigma\), define the oriented incidence
coefficient by
\[
\varepsilon(\rho,\sigma)
=
\ip{e_\rho}{\bd e_\sigma}
=
\begin{cases}
(-1)^i,
&\rho=\sigma\setminus\{v_i\}\text{ for some }i,\\
0,
&\text{otherwise}.
\end{cases}
\]
The identity \(\bd^2=0\) follows from the usual cancellation: for
\(0\le a<b\le k\), the two contributions to the coefficient of
\(e_{\sigma\setminus\{v_a,v_b\}}\) in \(\bd^2e_\sigma\) are
$(-1)^a(-1)^{b-1}$ and $(-1)^b(-1)^a$
whose sum is zero.
When convenient, we index the rows of \(B_F\) by all of
\(\binom{V}{q-1}\), adjoining zero rows for the ridges not occurring in
\(F\). Changing the vertex order or the oriented face bases replaces
\(B_F\) by \(PB_FQ\) for signed permutation matrices \(P\) and \(Q\).
Consequently, \(L_F^+\) and \(M_F\) change only by orthogonal similarity,
so their spectra are independent of these choices.

For a real symmetric matrix \(A\), write \(A\succeq0\) when $
\mathbf x^{\mathsf T}A\mathbf x\ge0$  for every $\mathbf x,$
and write \(A\preceq0\) when \(-A\succeq0\). If
\(A\in\R^{n\times n}\), let \(\Spec(A)\) denote its eigenvalue multiset,
and arrange its eigenvalues as
$\lambda_1(A)\ge\cdots\ge\lambda_n(A).$
For \(0\le r\le n\), define
\[
s_r(A)=\sum_{i=1}^r\lambda_i(A),
\qquad
s_0(A)=0.
\]
For a positive-semidefinite matrix, append zero eigenvalues and thereby
extend \(s_r(A)\) to every \(r\ge0\). With \(s_0(F)=0\), the notation
from the Introduction satisfies $s_r(F)=s_r(L_F^+)=s_r(M_F)\  (r\ge0).$
For \(a\in\R\), put \(a_+=\max\{a,0\}\). For a coordinate vector
\(\mathbf x=(x_\omega)_{\omega\in\Omega}\) and a subspace
\(\scrU\subseteq\R^\Omega,\)  define
\[
\supp(\mathbf x)=\{\omega\in\Omega:x_\omega\ne0\},
\qquad
\supp(\scrU)=\bigcup_{\mathbf x\in\scrU}\supp(\mathbf x).
\]

The case \(q=1\) is immediate. If \(F=\varnothing\), then
\(s_r(F)=D_r(F)=0\). If \(F\ne\varnothing\), then \(B_F\) is the
one-row all-one matrix, so \(L_F^+\) has the single positive eigenvalue
\(|F|\), while every nonisolated vertex has degree one. Hence
$s_r(F)=D_r(F)=|F|$ for \(r\ge1\).
Deleting isolated vertices changes neither side. Accordingly, except
where stated otherwise, we henceforth assume
$q\ge2$ and $V=\bigcup_{\sigma\in F}\sigma.$
	
We next collect the variational and compression tools used in the proof
of Theorem~\ref{thm:main-inequality}.   If \(\scrU\) is a subspace
	of a finite-dimensional Euclidean space, let \(\Pi_{\scrU}\) denote the
	orthogonal projection onto \(\scrU\).  We use \(\norm{\cdot}\) for the
	Euclidean norm on vectors and its induced spectral norm on matrices.  For a
	finite set \(\Omega\), a rank-\(k\)
	orthogonal projection on \(\R^\Omega\) is a matrix \(\Pi\in\R^{\Omega\times\Omega}\)
	such that \(\Pi^2=\Pi=\Pi^{\mathsf T}\) and \(\rank(\Pi)=k\).
The following is Ky Fan's maximum principle; while standard,
its equality case requires explicit formalization for our application.
	\begin{lemma}\label{lem:kyfan}
		Let \(A\in\R^{\Omega\times\Omega}\) be symmetric.
		Then, for every \(1\le k\le |\Omega|\),
		\[
		s_k(A)
		=
		\max_\Pi \tr(\Pi A),
		\]
		where \(\Pi\) ranges over all rank-\(k\) orthogonal projections on \(\R^\Omega\).
		Moreover, if \(a=\lambda_k(A)\) is the \(k\)-th largest eigenvalue of \(A\), a \(k\)-dimensional subspace \(\scrU\)  satisfies \(\tr(\Pi_{\scrU}A)=s_k(A)\) if and only if $\scrE_{>a}\subseteq\scrU\subseteq\scrE_{>a}\oplus\scrE_{=a},$
		where \(\scrE_{>a}\) and \(\scrE_{=a}\) denote the sums of the eigenspaces
		of \(A\) with eigenvalues strictly greater than, and equal to \(a\),
		respectively.
	\end{lemma}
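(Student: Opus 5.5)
We diagonalize \(A\) and reduce the trace to a weighted sum of eigenvalues. Fix an orthonormal eigenbasis \(\mathbf u_1,\dots,\mathbf u_n\) of \(A\), with \(n=|\Omega|\) and \(A\mathbf u_i=\lambda_i\mathbf u_i\), where \(\lambda_1\ge\cdots\ge\lambda_n\). For a rank-\(k\) orthogonal projection \(\Pi=\Pi_{\scrU}\), put \(w_i=\mathbf u_i^{\mathsf T}\Pi\mathbf u_i=\norm{\Pi\mathbf u_i}^2\). Then
\[
\tr(\Pi A)=\sum_{i=1}^n\lambda_i w_i,
\qquad
0\le w_i\le1,
\qquad
\sum_i w_i=\tr\Pi=k.
\]
The upper bound \(w_i\le 1\) holds because \(\Pi\) is a contraction.

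The inequality \(\tr(\Pi A)\le s_k(A)\) then follows from a linear-programming exchange argument. Set \(a=\lambda_k\) and write
\[
s_k(A)-\tr(\Pi A)=\sum_{i\le k}(\lambda_i-a)(1-w_i)+\sum_{i>k}(a-\lambda_i)w_i .
\]
Here the terms \(a\sum_{i}(\cdot)\) cancel because \(\sum_{i\le k}(1-w_i)=\sum_{i>k}w_i\). Each summand is nonnegative, which gives the inequality. The span of \(\mathbf u_1,\dots,\mathbf u_k\) attains the bound, so the maximum equals \(s_k(A)\).

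For the equality clause, equality holds if and only if every summand in the displayed identity vanishes. This amounts to two conditions: \(w_i=1\) whenever \(\lambda_i>a\), and \(w_i=0\) whenever \(\lambda_i<a\). These must be translated into subspace conditions, and this translation is the step that needs care, since the weights only record diagonal information. The key observation is that \(w_i=\norm{\Pi\mathbf u_i}^2=1\) forces \(\Pi\mathbf u_i=\mathbf u_i\), that is, \(\mathbf u_i\in\scrU\). This uses \(\norm{\Pi\mathbf u_i}^2+\norm{(I-\Pi)\mathbf u_i}^2=1\). Likewise \(w_i=0\) forces \(\mathbf u_i\perp\scrU\).

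Hence equality holds if and only if \(\scrE_{>a}\subseteq\scrU\) and \(\scrU\perp\scrE_{<a}\). The second condition is equivalent to \(\scrU\subseteq\scrE_{>a}\oplus\scrE_{=a}\), since the eigenspaces decompose \(\R^\Omega\) orthogonally.

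Conversely, suppose \(\scrE_{>a}\subseteq\scrU\subseteq\scrE_{>a}\oplus\scrE_{=a}\). Then \(\dim\scrU=k\) lets us choose the eigenbasis adapted to \(\scrU\). Write \(\scrU=\scrE_{>a}\oplus\scrW\) with \(\scrW\subseteq\scrE_{=a}\), and pick the basis of \(\scrE_{=a}\) to extend an orthonormal basis of \(\scrW\). In this basis \(\tr(\Pi_{\scrU}A)\) is directly the sum of the \(k\) largest eigenvalues. Alternatively, the vanishing criterion above applies, and it is basis-independent once stated in terms of \(\scrU\). Either way, equality holds, which completes the equivalence.
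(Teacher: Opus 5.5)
Your proposal is correct and follows the paper's argument. It uses the same weights $w_i=\norm{\Pi\mathbf u_i}^2$ and the same exact nonnegative decomposition of $s_k(A)-\tr(\Pi A)$; your split at $i\le k$ versus $i>k$ agrees with the paper's split at $\lambda_j>a$ versus $\lambda_j<a$, because the extra terms have $\lambda_i=a$ and vanish. It also uses the same translation of $w_i=1$ and $w_i=0$ into $\mathbf u_i\in\scrU$ and $\mathbf u_i\perp\scrU$. The differences are minor. You derive the maximum principle itself from this identity and state $w_i\le 1$ explicitly, where the paper simply cites Ky Fan. Your separate converse paragraph is redundant, since each step of your equality chain is already a biconditional.
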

	
	\begin{proof}
		The variational formula is Ky Fan's maximum principle \cite[Theorem~1]{Fan}.
		It remains to describe the equality case.
		Put \(n=|\Omega|\), and let \(\lambda_1\ge\cdots\ge\lambda_n\) be the eigenvalues of \(A\).
		Choose an orthonormal eigenbasis \(\mathbf u_1,\ldots,\mathbf u_n\)
		with \(A\mathbf u_j=\lambda_j\mathbf u_j\). For a \(k\)-dimensional subspace
		\(\scrU\), put \(a_j=\norm{\Pi_{\scrU}\mathbf u_j}^2\). Then
		\[
		s_k(A)=\sum_{\lambda_j>a}\lambda_j+ \bigl(k-\dim\scrE_{>a}\bigr)a.
		\]
		Because \(\sum_j a_j=k\), subtraction gives the exact identity
		\[
		s_k(A)-\tr(\Pi_{\scrU}A)
		=\sum_{\lambda_j>a}(\lambda_j-a)(1-a_j)
		+\sum_{\lambda_j<a}(a-\lambda_j)a_j.
		\]
		Every summand is nonnegative. Equality therefore holds exactly when
		\(a_j=1\) for \(\lambda_j>a\) and \(a_j=0\) for \(\lambda_j<a\).
		For an orthogonal projection, \(\norm{\Pi_{\scrU}\mathbf u_j}=1\) is equivalent to
		\(\mathbf u_j\in\scrU\), and \(\norm{\Pi_{\scrU}\mathbf u_j}=0\) is equivalent to
		\(\mathbf u_j\perp\scrU\). This is precisely the displayed containment.
	\end{proof}
We will need the following standard monotonicity property of Ky Fan sums.
	
	\begin{lemma}[\cite{Bhatia}]\label{lem:kyfan-monotone}
		Let \(A,B\in\R^{n\times n}\) be symmetric. If \(B-A\succeq 0\), then
		\(s_r(A) \le s_r(B)\)
		for every \(1\le r\le n\).
	\end{lemma}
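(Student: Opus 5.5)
The statement is Lemma~\ref{lem:kyfan-monotone}: if \(B-A\succeq0\), then \(s_r(A)\le s_r(B)\). The plan is to derive it directly from the variational formula in Lemma~\ref{lem:kyfan}, applied with \(\Omega=\{1,\ldots,n\}\).

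Fix \(1\le r\le n\). By Lemma~\ref{lem:kyfan}, choose a rank-\(r\) orthogonal projection \(\Pi\) attaining \(s_r(A)=\tr(\Pi A)\); concretely, \(\Pi\) is the projection onto the span of \(r\) orthonormal eigenvectors of \(A\) for its \(r\) largest eigenvalues. The key step is then the trace inequality \(\tr(\Pi(B-A))\ge0\). To see this, write \(\Pi=\sum_{j=1}^r\mathbf u_j\mathbf u_j^{\mathsf T}\) with \(\mathbf u_1,\ldots,\mathbf u_r\) orthonormal. Then
\[
\tr\bigl(\Pi(B-A)\bigr)=\sum_{j=1}^r\mathbf u_j^{\mathsf T}(B-A)\mathbf u_j\ge0,
\]
and each summand is nonnegative by the definition of \(B-A\succeq0\).

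Combining this with the maximum principle for \(B\) gives
\[
s_r(A)=\tr(\Pi A)\le\tr(\Pi B)\le\max_{\Pi'}\tr(\Pi' B)=s_r(B),
\]
where \(\Pi'\) ranges over rank-\(r\) orthogonal projections. This is the claim.

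There is no real obstacle; the only point needing care is the trace nonnegativity \(\tr(\Pi C)\ge0\) for \(C\succeq0\), which the rank-one decomposition of \(\Pi\) handles. As an alternative route, one could invoke Weyl's monotonicity \(\lambda_i(A)\le\lambda_i(B)\) from Courant--Fischer and sum over \(i\le r\). I will use the Ky Fan route because it relies only on Lemma~\ref{lem:kyfan}.
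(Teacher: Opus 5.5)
Your proof is correct: a projection \(\Pi\) attaining \(s_r(A)=\tr(\Pi A)\), together with \(\tr\bigl(\Pi(B-A)\bigr)=\sum_j\mathbf u_j^{\mathsf T}(B-A)\mathbf u_j\ge0\) and the upper bound \(\tr(\Pi B)\le s_r(B)\), uses exactly the two directions of Lemma~\ref{lem:kyfan} that are needed. Nothing in Lemma~\ref{lem:kyfan} depends on the present lemma, so the argument is not circular.

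The paper gives no proof of its own here; it cites Bhatia for the statement as a standard fact. Your derivation therefore differs only in making the section self-contained, since it relies on nothing beyond the Ky Fan principle already recorded in Lemma~\ref{lem:kyfan}.

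The Weyl-monotonicity route you mention would prove something stronger: the eigenvalue-by-eigenvalue inequality \(\lambda_i(A)\le\lambda_i(B)\), from which the partial-sum statement follows by summation. The paper only ever uses the partial-sum form, in the proof of Lemma~\ref{lem:completion}, so your Ky Fan argument is enough.
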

	
	For a real symmetric matrix
	\(A=\sum_j\lambda_j\mathbf u_j\mathbf u_j^{\mathsf T}\), define its
	\emph{positive part} by
	$A_+=\sum_{\lambda_j>0}\lambda_j\mathbf u_j\mathbf u_j^{\mathsf T}.$
	Thus \(\tr(A_+)=\sum_j(\lambda_j(A))_+\), independently of the
	chosen orthonormal eigenbasis.
We also record a subadditivity property for the trace of the positive part.
	
	\begin{lemma}\label{lem:positive-part}
		Let \(A_1,\ldots,A_\ell\in\R^{\Omega\times\Omega}\) be symmetric,
		where \(\Omega\) is finite. Then
		\[
		\tr\bigl((A_1+\cdots+A_\ell)_+\bigr)
		\le
		\sum_{i=1}^\ell \tr((A_i)_+).
		\]
	\end{lemma}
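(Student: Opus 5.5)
The plan is to write the trace of the positive part variationally and then use subadditivity of a maximum. For a symmetric matrix \(A\in\R^{\Omega\times\Omega}\), I claim
\[
\tr(A_+)=\max_{\Pi}\tr(\Pi A),
\]
where \(\Pi\) ranges over all orthogonal projections on \(\R^\Omega\) of every rank, including \(\Pi=0\).

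First I will establish this identity. Let \(A=\sum_j\lambda_j\mathbf u_j\mathbf u_j^{\mathsf T}\) be a spectral decomposition.

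\textbf{Lower bound.} Take \(\Pi\) to be the projection onto \(\Span\{\mathbf u_j:\lambda_j>0\}\). Then \(\tr(\Pi A)=\sum_j(\lambda_j)_+=\tr(A_+)\). If there are no positive eigenvalues, this is \(\Pi=0\).

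\textbf{Upper bound.} Fix a projection \(\Pi\) and set \(a_j=\norm{\Pi\mathbf u_j}^2\). Since \(\Pi\) is an orthogonal projection and \(\mathbf u_j\) is a unit vector, \(0\le a_j\le1\). Moreover
\[
\tr(\Pi A)=\sum_j\lambda_j\,\mathbf u_j^{\mathsf T}\Pi\mathbf u_j=\sum_j\lambda_j a_j\le\sum_j(\lambda_j)_+ .
\]
Alternatively, for rank \(k\ge1\) one can invoke Lemma~\ref{lem:kyfan}, which gives \(\tr(\Pi A)\le s_k(A)\le\tr(A_+)\). The rank-zero case contributes \(0\le\tr(A_+)\).

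Second, I will apply the identity to \(A=A_1+\cdots+A_\ell\). Choose a projection \(\Pi\) attaining the maximum for \(A\). By linearity of the trace,
\[
\tr(A_+)=\tr(\Pi A)=\sum_{i=1}^\ell\tr(\Pi A_i).
\]
Each summand satisfies \(\tr(\Pi A_i)\le\tr((A_i)_+)\) by the upper bound of the first step. Summing these bounds gives the claimed inequality.

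I do not expect a real obstacle here. The only point needing care is allowing projections of arbitrary rank, including zero, so that the maximum equals the sum of the positive eigenvalues rather than a fixed Ky Fan sum. The elementary computation with \(a_j\in[0,1]\) handles this directly and avoids any rank bookkeeping.
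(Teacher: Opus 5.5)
Your proposal is correct and follows essentially the same route as the paper: the variational identity \(\tr(A_+)=\max_\Pi\tr(\Pi A)\) over orthogonal projections of all ranks (proved via \(\tr(\Pi A)=\sum_j\lambda_j\norm{\Pi\mathbf u_j}^2\le\sum_j(\lambda_j)_+\), attained by the projection onto the positive eigenspaces), followed by linearity of the trace and the fact that a maximum of a sum is at most the sum of the maxima. Fixing a maximizer for \(A_1+\cdots+A_\ell\) and bounding each \(\tr(\Pi A_i)\) separately is the same step written out explicitly.
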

	
	\begin{proof}
		For an arbitrary real symmetric matrix
		\(A=\sum_j\lambda_j(A)\mathbf u_j\mathbf u_j^{\mathsf T}\) and any orthogonal
		projection \(\Pi\), the spectral theorem gives
		\[
		\tr(\Pi A)=\sum_j\lambda_j(A)\norm{\Pi\mathbf u_j}^2
		\le\sum_{\lambda_j(A)>0}\lambda_j(A)=\tr(A_+).
		\]
		Equality is attained by the projection onto the direct sum of the
		positive eigenspaces, and by \(\Pi=0\) if \(A\preceq0\).  Therefore
		\(\tr(A_+)=\max_\Pi\tr(\Pi A)\), where \(\Pi\) ranges over projections of
		all ranks.  Applying this identity to the sum gives
		\begin{align*}
			\tr\bigl((A_1+\cdots+A_\ell)_+\bigr)
			&=\max_\Pi\sum_{i=1}^\ell\tr(\Pi A_i)\\
			&\le\sum_{i=1}^\ell\max_{\Pi_i}\tr(\Pi_iA_i)
			=\sum_{i=1}^\ell\tr((A_i)_+).
		\end{align*}
	\end{proof}

To handle principal submatrices, we require the following interlacing bound.
We state the equality condition explicitly in terms of index support to facilitate our structural classification later.
	
	\begin{lemma}\label{lem:compression-equality}
		Let \(B\succeq0\) be a real symmetric matrix, and let \(A\) be a principal submatrix of \(B\).
		For every \(1\le r\le\dim A\), $s_r(A)\le s_r(B).$
		Equality holds if and only if there exists an \(r\)-dimensional subspace \(\scrU\) supported entirely on the indices of \(A\) such that \(\tr(\Pi_{\scrU}B)=s_r(B)\).
	\end{lemma}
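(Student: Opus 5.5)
We work directly with Ky Fan's maximum principle (Lemma~\ref{lem:kyfan}). Let $B\in\R^{\Omega\times\Omega}$ with $B\succeq0$, let $S\subseteq\Omega$ index the principal submatrix $A=B_{S,S}$, and let $J\colon\R^S\to\R^\Omega$ be extension by zero. Then $J^{\mathsf T}J=I$ and $A=J^{\mathsf T}BJ$. Positive semidefiniteness of $B$ is not needed for the inequality; it only guarantees that the padded Ky Fan sums of the Preliminary are meaningful.

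For the inequality, fix $1\le r\le|S|$ and a rank-$r$ projection $\Pi$ on $\R^S$ with $\tr(\Pi A)=s_r(A)$; this exists by Lemma~\ref{lem:kyfan}. Put $\scrU=J(\im\Pi)\subseteq\R^\Omega$. Since $J$ is an isometry, $\dim\scrU=r$, $\supp(\scrU)\subseteq S$, and $\Pi_{\scrU}=J\Pi J^{\mathsf T}$. Cyclicity of trace gives
\[
\tr(\Pi_{\scrU}B)=\tr(J\Pi J^{\mathsf T}B)=\tr(\Pi J^{\mathsf T}BJ)=\tr(\Pi A)=s_r(A).
\]
Applying Lemma~\ref{lem:kyfan} to $B$ then yields $s_r(A)\le s_r(B)$.

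For the equality characterization, first suppose $s_r(A)=s_r(B)$. The subspace $\scrU$ just built is $r$-dimensional, is supported on $S$, and satisfies $\tr(\Pi_{\scrU}B)=s_r(B)$, so it is the required witness. Conversely, suppose $\scrU\subseteq\R^\Omega$ is $r$-dimensional with $\supp(\scrU)\subseteq S$ and $\tr(\Pi_{\scrU}B)=s_r(B)$. Then $\scrU=J(\scrU')$ for the $r$-dimensional subspace $\scrU'=J^{\mathsf T}\scrU\subseteq\R^S$, and again $\Pi_{\scrU}=J\Pi_{\scrU'}J^{\mathsf T}$. The same trace identity gives $\tr(\Pi_{\scrU'}A)=s_r(B)$. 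Lemma~\ref{lem:kyfan} applied to $A$ bounds the left side by $s_r(A)$, so $s_r(B)\le s_r(A)$. Combined with the inequality, this forces equality.

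The only point needing care, and the closest thing to an obstacle, is justifying $\Pi_{J(\scrW)}=J\Pi_{\scrW}J^{\mathsf T}$ for subspaces $\scrW\subseteq\R^S$. To check it, note that $J\Pi_{\scrW}J^{\mathsf T}$ is symmetric and idempotent because $J^{\mathsf T}J=I$. Its image is $J(\scrW)$, since $J^{\mathsf T}$ is surjective onto $\R^S$. We also need that every subspace supported on $S$ lies in $\im J$, which holds because $\im J$ is exactly the set of vectors vanishing off $S$. The constraint $r\le|S|$ ensures that rank-$r$ projections on $\R^S$ exist.
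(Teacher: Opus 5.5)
Your proof is correct and follows essentially the paper's route: extend an optimal $r$-frame for $A$ by zero, use Ky Fan's principle for $B$ to get the inequality, and characterize equality by whether some maximizing subspace for $B$ is supported on the indices of $A$. You write out the converse direction (restricting such a supported maximizer back to $\R^S$ and applying Ky Fan to $A$) more explicitly than the paper's one-line equality remark.
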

	
	\begin{proof}
		Up to a permutation of coordinates, we can write \(B\) in block form as
		\[
		B = \begin{pmatrix} A & C \\ C^{\mathsf T} & D \end{pmatrix}.
		\]
		By Lemma~\ref{lem:kyfan}, \(s_r(A) = \max_U \tr(U^{\mathsf T} A U)\), where \(U\) ranges over all matrices with \(r\) orthonormal columns of the same row dimension as \(A\).
		For any such \(U\), the extended block matrix \(V = \begin{pmatrix} U \\ 0 \end{pmatrix}\) also has \(r\) orthonormal columns. Matrix multiplication yields
$\tr(U^{\mathsf T} A U) = \tr(V^{\mathsf T} B V).$
Applying Lemma~\ref{lem:kyfan} to \(B\) gives \(\tr(V^{\mathsf T} B V) \le s_r(B)\). Taking the maximum over all admissible \(U\) yields \(s_r(A) \le s_r(B)\).
		
		Equality holds exactly when the maximum is attained by some extended matrix \(V\) whose lower block is zero. The column space of such a \(V\) is an \(r\)-dimensional subspace \(\scrU\) supported entirely on \(A\), and it satisfies \(\tr(\Pi_{\scrU}B)=s_r(B)\).
	\end{proof}

We now present a rigid trace bound for a specific class of symmetric block matrices, which models our boundary operators.
\begin{lemma}\label{lem:positive-excess-rigidity}
Let \(a, b_1, \ldots, b_\ell > 0\), and let \(\mathbf z_1, \ldots, \mathbf z_\ell\) be vectors of squared norm \(a\) in a finite-dimensional real inner-product space \(\scrH_0\). Define \(X:\R^\ell\to\scrH_0\) on the standard basis by \(Xe_i=\sqrt{b_i}\,\mathbf z_i\), and set
		\[
		T=
		\begin{pmatrix}
			0&X\\
			X^{\mathsf T}&\diag(b_1,\ldots,b_\ell)-aI_\ell
		\end{pmatrix}
		\quad\text{on }\scrH_0\oplus\R^\ell.
		\]
		Then \(\tr(T_+)\le\sum_{i=1}^{\ell}b_i\). Equality holds if and only if the vectors \(\mathbf z_i\) are pairwise orthogonal. In this case, the nonzero eigenvalues of \(T\) are \(b_1,\ldots,b_\ell\) with respective eigenvectors \((\mathbf z_i,\sqrt{b_i}\,e_i)\), and \(-a\) with multiplicity \(\ell\).
	\end{lemma}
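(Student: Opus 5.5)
The plan is to decompose \(T\) as a sum of \(\ell\) rank-two pieces, bound each piece's positive trace by Lemma~\ref{lem:positive-part}, and then analyse when every one of these bounds is tight.

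For each \(i\), let
\[
T_i=\begin{pmatrix}0&\sqrt{b_i}\,\mathbf z_ie_i^{\mathsf T}\\ \sqrt{b_i}\,e_i\mathbf z_i^{\mathsf T}&(b_i-a)e_ie_i^{\mathsf T}\end{pmatrix}.
\]
Then \(T=\sum_iT_i\). Each \(T_i\) acts nontrivially only on the two-dimensional space spanned by \(\hat{\mathbf z}_i=\mathbf z_i/\sqrt a\) and \(e_i\). In that orthonormal basis it is the \(2\times2\) matrix
\[
\begin{pmatrix}0&\sqrt{ab_i}\\ \sqrt{ab_i}&b_i-a\end{pmatrix}.
\]
Its trace is \(b_i-a\) and its determinant is \(-ab_i\). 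The eigenvalues are therefore exactly \(b_i\) and \(-a\), with eigenvectors \((\mathbf z_i,\sqrt{b_i}e_i)\) and \((\sqrt{b_i}\,\mathbf z_i,-a e_i)\) up to scaling; a direct check confirms this. Hence \(\tr((T_i)_+)=b_i\), and Lemma~\ref{lem:positive-part} immediately gives \(\tr(T_+)\le\sum_ib_i\).

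For the "if" direction of the equality case, assume the \(\mathbf z_i\) are pairwise orthogonal. Then the two-dimensional supports \(\Span\{\hat{\mathbf z}_i,e_i\}\) are mutually orthogonal, and \(T\) is the orthogonal direct sum of the \(T_i\) (acting as zero on the complement). Its nonzero spectrum is therefore \(b_1,\dots,b_\ell\) with the stated eigenvectors, together with \(-a\) of multiplicity \(\ell\). In particular \(\tr(T_+)=\sum b_i\).

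The "only if" direction is the main obstacle. Here I would use the trace identity rather than the variational characterization. Since \(\tr T=\sum_i(b_i-a)\), we have \(\tr(T_+)=\tr T+\tr(T_-)\), where \(T_-=(-T)_+\). Equality \(\tr(T_+)=\sum b_i\) is thus equivalent to \(\tr((-T)_+)=\ell a\).

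Now \(-T=\sum_i(-T_i)\), and each \(-T_i\) has a single positive eigenvalue \(a\), with unit eigenvector \(\mathbf w_i\propto(\sqrt{b_i}\,\mathbf z_i,-ae_i)\). So \(-T\preceq\sum_i a\,\mathbf w_i\mathbf w_i^{\mathsf T}=aG\), where \(G=\sum_i\mathbf w_i\mathbf w_i^{\mathsf T}\). The plan is to show that \(\tr((-T)_+)=\ell a\) forces \(G\) to be a rank-\(\ell\) projection, i.e.\ the \(\mathbf w_i\) to be orthonormal. Since the \(e_i\)-components of the \(\mathbf w_i\) are already orthogonal, orthonormality of the \(\mathbf w_i\) means \(b_i\langle\mathbf z_i,\mathbf z_j\rangle\sqrt{b_j}/(\text{norms})=0\) for \(i\neq j\), i.e.\ \(\mathbf z_i\perp\mathbf z_j\).

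To get this rigidity I would go back to the proof of Lemma~\ref{lem:positive-part}. Let \(\Pi\) be the projection onto the positive eigenspace of \(-T\). Equality requires \(\tr(\Pi(-T_i))=a\) for every \(i\), which by the equality analysis of Lemma~\ref{lem:kyfan} (applied to the \(2\times2\) block) forces \(\mathbf w_i\in\operatorname{im}\Pi\) and \(\Pi\) orthogonal to the negative eigenvector \(\mathbf y_i\) of \(-T_i\), which has eigenvalue \(-b_i<0\). By the same argument applied to \(T\) and its positive-part projection \(\Pi'\), we also get \(\mathbf y_i\in\operatorname{im}\Pi'\) and \(\mathbf w_i\perp\operatorname{im}\Pi'\). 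Then each \(\mathbf w_i\) lies in the positive eigenspace of \(-T\) and each \(\mathbf y_i\) in the positive eigenspace of \(T\). These two eigenspaces are orthogonal, so \(\mathbf w_i\perp\mathbf y_j\) for all \(i,j\).

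Writing out \(\langle\mathbf w_i,\mathbf y_j\rangle\) for \(i\ne j\): the \(e\)-parts contribute nothing, so the inner product is a positive multiple of \(\langle\mathbf z_i,\mathbf z_j\rangle\). Hence \(\langle\mathbf z_i,\mathbf z_j\rangle=0\) for all \(i\neq j\). The delicate point to verify is that the equality condition in Lemma~\ref{lem:positive-part} really does force termwise equality \(\tr(\Pi T_i)=\tr((T_i)_+)\); this holds because each summand satisfies \(\tr(\Pi T_i)\le\tr((T_i)_+)\) and the sum is tight.
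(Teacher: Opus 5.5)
Your proof is correct and is essentially the paper's: the same decomposition \(T=\sum_iT_i\) into rank-two pieces with eigenvalues \(b_i\) and \(-a\), Lemma~\ref{lem:positive-part} for the bound, and the orthogonal direct sum for the converse. For rigidity you use, as the paper does, termwise equality \(\tr(\Pi'T_i)=b_i\) for the positive-part projection \(\Pi'\) of \(T\), which forces \(\mathbf y_i\in\im\Pi'\) and \(\mathbf w_i\perp\im\Pi'\); the detour through \(\tr((-T)_+)=\ell a\), the matrix \(G\), and the positive eigenspace of \(-T\) is redundant, since \(\mathbf w_i\perp\im\Pi'\ni\mathbf y_j\) already gives \(\ip{\mathbf w_i}{\mathbf y_j}=0\), which for \(i\ne j\) is a nonzero multiple of \(\ip{\mathbf z_i}{\mathbf z_j}\).
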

	
	\begin{proof}
		For \(1\le i\le\ell\), define \(X_i:\R^\ell\to\scrH_0\) by \(X_i e_j=\delta_{ij}\sqrt{b_i}\,\mathbf z_i\), and put
		\[
		T_i=
		\begin{pmatrix}
			0&X_i\\
			X_i^{\mathsf T}&(b_i-a)e_ie_i^{\mathsf T}
		\end{pmatrix}.
		\]
		Then \(T=\sum_iT_i\). In an orthonormal basis of \(\scrH_0\) extending \(\mathbf z_i/\sqrt{a}\), the principal block of \(T_i\) is orthogonally similar to
		\[
		\begin{pmatrix}
			0&\sqrt{ab_i}\\
			\sqrt{ab_i}&b_i-a
		\end{pmatrix}.
		\]
		The nonzero eigenvalues of \(T_i\) are exactly the roots of \((\lambda-b_i)(\lambda+a)\), namely \(b_i\) and \(-a\), with orthogonal eigenvectors
		\[
		\mathbf u_i=(\mathbf z_i,\sqrt{b_i}\,e_i),
		\qquad
		\mathbf v_i=(-\sqrt{b_i/a}\,\mathbf z_i,\sqrt a\,e_i).
		\]
		Both have squared norm \(a+b_i\). Spectral decomposition gives
		\[
		T_i=\frac{b_i}{a+b_i}\mathbf u_i\mathbf u_i^{\mathsf T}
		-\frac{a}{a+b_i}\mathbf v_i\mathbf v_i^{\mathsf T}.
		\]
		Thus \(\tr((T_i)_+)=b_i\), and Lemma~\ref{lem:positive-part} yields
		\[
		\tr(T_+)\le\sum_{i=1}^{\ell}\tr((T_i)_+)
		=\sum_{i=1}^{\ell}b_i.
		\]
		If equality holds, let \(\Pi\) be the orthogonal projection onto the positive eigenspace of \(T\). Since
		\[
		\tr(\Pi T_i)
		=\frac{b_i}{a+b_i}\norm{\Pi\mathbf u_i}^{2}
		-\frac{a}{a+b_i}\norm{\Pi\mathbf v_i}^{2}\le b_i,
		\]
		equality forces \(\Pi\mathbf u_i=\mathbf u_i\) and \(\Pi\mathbf v_i=0\) for all \(i\). Using \(\Pi=\Pi^{\mathsf T}\), for \(i\ne j\),
		\[
		0=\ip{\mathbf u_i}{\Pi\mathbf v_j}=\ip{\Pi\mathbf u_i}{\mathbf v_j}=\ip{\mathbf u_i}{\mathbf v_j}
		=-\sqrt{b_j/a}\,\ip{\mathbf z_i}{\mathbf z_j}.
		\]
		Hence, the vectors \(\mathbf z_i\) are pairwise orthogonal.
		Conversely, if the \(\mathbf z_i\) are pairwise orthogonal, the subspaces
		\[
			\Span\{(\mathbf z_i,0),(0,e_i)\}
			\qquad(1\le i\le\ell)
		\]
		are mutually orthogonal and \(T\)-invariant.  On the \(i\)-th subspace,
		\(T\) has eigenvalues \(b_i\) and \(-a\), and \(T=0\) on their
		orthogonal complement.  The assertions follow.
	\end{proof}

We conclude this section with a simple algebraic bound for eigenvalue partial sums.
	\begin{lemma}\label{lem:excess-controls}
		Let \(A\succeq0\) and \(a\ge0\). For every \(r\ge1\),
		\[
		s_r(A)\le ra+\sum_{\lambda\in\Spec(A)}(\lambda-a)_+.
		\]
	\end{lemma}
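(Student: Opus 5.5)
The inequality follows from a termwise comparison in the spectral decomposition of \(A\). Write \(\lambda_1\ge\lambda_2\ge\cdots\ge0\) for the eigenvalues of \(A\), padded with zeros if \(r\) exceeds the dimension; this padding is consistent with the convention for \(s_r\) on positive-semidefinite matrices. For each index \(i\) one has the elementary scalar inequality
\[
\lambda_i\le a+(\lambda_i-a)_+ ,
\]
which holds because \(\lambda_i\le a\) gives \(\lambda_i\le a\), while \(\lambda_i>a\) gives \(\lambda_i=a+(\lambda_i-a)\).

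Summing this over \(i=1,\ldots,r\) yields
\[
s_r(A)\le ra+\sum_{i=1}^r(\lambda_i-a)_+ .
\]
Since every term \((\lambda-a)_+\) is nonnegative, extending the sum from the first \(r\) eigenvalues to the whole spectrum \(\Spec(A)\) can only increase the right-hand side. The padded zero eigenvalues cause no trouble: they contribute \((0-a)_+=0\) because \(a\ge0\), so whether they are counted in \(\Spec(A)\) does not matter.

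There is no real obstacle here. The only point needing care is this padding convention when \(r>\dim A\), and it is handled by the hypothesis \(a\ge0\).

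Equivalently, the bound is the Ky Fan sum of \(aI+(A-aI)_+\), a matrix that dominates \(A\) in the Loewner order, so Lemma~\ref{lem:kyfan-monotone} would also give it. The direct scalar argument is shorter, and I would use that.
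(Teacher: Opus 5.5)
Your proposal is correct and matches the paper's proof: both use the scalar bound $\lambda_j\le a+(\lambda_j-a)_+$ for the zero-padded eigenvalues, sum it over $j=1,\ldots,r$, and then enlarge the sum to all of $\Spec(A)$ because each term $(\lambda-a)_+$ is nonnegative. Your remark that the padded zeros contribute $(0-a)_+=0$ because $a\ge0$ correctly spells out a point the paper leaves implicit.
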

	
	\begin{proof}
		Let \(\lambda_1\ge\lambda_2\ge\cdots\ge0\) be the eigenvalues of \(A\), extended by zeros. Since \(\lambda_j\le a+(\lambda_j-a)_+\) for all \(j\),
		\[
		s_r(A)
		=
		\sum_{j=1}^r\lambda_j
		\le
		ra+\sum_{j=1}^r(\lambda_j-a)_+
		\le
		ra+\sum_{\lambda\in\Spec(A)}(\lambda-a)_+.
		\]
	\end{proof}
	
\section{Systematic Counterexamples}\label{sec:seeds}
This section develops the systematic counterexamples required for Theorem \ref{thm:counterexamples}.
For a uniform family \(F\) and an integer \(r\ge1\), define its signed
defect at index \(r\) by
\[
\Delta_r(F)=s_r(F)-D_r(F).
\]
Thus \(F\) is a strict counterexample at index \(r\) precisely when
\(\Delta_r(F)>0\).
Two $3$-uniform seeds, identified through an exhaustive computer search on seven vertices,
play complementary roles.
  A sixteen-facet seed
	gives the counterexample at \(r=5\) directly.  A fifteen-facet seed drives
	the transfer construction: complementation sends its defect at \(r=9\) to
	\(15-9=6\), while \(t\) ridge whiskers preserve its defect at \(r=8\) and
	complementation then sends it to \((15+t)-8=7+t\).  The whiskering is
	enabled by a distinguished ridge satisfying the eigenvector condition
	proved below.  Thus the two seeds cover every \(r\ge5\).
	Both lie on \(\{0,1,\ldots,6\}\).
Write \(ijk\) for the set \(\{i,j,k\}\).  On the ground set
	\(\{0,1,\ldots,6\}\), define
	\begin{align*}
		\calF_{16}=\{&012,013,015,026,123,124,125,126,\\
		&134,135,136,145,146,156,236,256\},\\
		\calF_{15}=\{&023,025,026,035,036,123,125,126,135,136,\\
		&234,235,236,256,345\}.
	\end{align*}
	Their degree sequences, in vertex order, are
	\[
	(4,13,8,6,4,6,7),
	\qquad
	(5,5,10,10,2,7,6).
	\]
To explicitly determine the spectral defects of the two seeds, we first record their characteristic polynomials.
\begin{lemma}\label{lem:seed-polynomials}
		Put
		\begin{align*}
			p_{16}(x)&=x^5-19x^4+133x^3-413x^2+527x-175,\\
			p_{15}(x)&=x^5-16x^4+91x^3-224x^2+233x-84.
		\end{align*}
		Then
		\begin{align*}
			\det(xI-M_{\calF_{16}})
			&=x^2(x-1)^5(x-3)(x-7)^3p_{16}(x),\\
			\det(xI-M_{\calF_{15}})
			&=x^4(x-2)(x-4)^2(x-6)^2(x-7)p_{15}(x)
		\end{align*}
		and \(p_{15}(x)=-p_{16}(7-x)\).
	\end{lemma}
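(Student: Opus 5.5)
This is a finite exact computation, and I would organize it so that every step can be checked by hand or with exact integer arithmetic. The plan is to write down $M_{\calF}=B_{\calF}^{\mathsf T}B_{\calF}$ explicitly for both seeds, then verify the stated factorizations by exhibiting enough eigenvectors to split off the linear factors and treating the remaining degree-five factor separately.

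First I would fix the vertex order $0<1<\cdots<6$ and build $M_F$. Its diagonal entries all equal $q=3$. For distinct facets $\sigma,\tau$ the entry is nonzero exactly when $|\sigma\cap\tau|=2$, and then it equals $\varepsilon(\sigma\cap\tau,\sigma)\,\varepsilon(\sigma\cap\tau,\tau)=\pm1$. This gives a $16\times16$ and a $15\times15$ integer matrix, which the appendix records.

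Second, I would split off the linear factors.
\begin{itemize}
\item The kernel of $M_F$ equals $\ker B_F$, which is spanned by cycles. For $\calF_{16}$ it should be $2$-dimensional and for $\calF_{15}$ it should be $4$-dimensional. I would exhibit these cycles as explicit $\pm1$ combinations of facets, for instance boundaries of tetrahedra whose four faces all lie in $F$. Checking that $\bd$ kills them is routine.
\item For each remaining linear factor, I would find integer eigenvectors by solving $(M_F-cI)\mathbf x=0$ over $\mathbb{Q}$. The values needed are $c=1,3,7$ for $\calF_{16}$ and $c=2,4,6,7$ for $\calF_{15}$.
\item In each case I would verify that the multiplicity equals $\dim\ker(M_F-cI)$. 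This holds because the matrices are symmetric, so geometric and algebraic multiplicities agree.
\end{itemize}

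Third, I would identify the quintic factors. The eigenvectors found so far span a subspace $W$ of dimension $11$ for $\calF_{16}$ and $10$ for $\calF_{15}$. The orthogonal complement $W^\perp$ is $M_F$-invariant of dimension $5$. The characteristic polynomial of $M_F$ restricted to $W^\perp$ is then $\det(xI-M_F)$ divided by the known linear factors.

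For a check that does not depend on choosing a basis of $W^\perp$, I would instead use trace power sums. I would compute $\tr(M_F^k)$ for $k=1,\dots,5$ and subtract the contributions of the known eigenvalues. Newton's identities then recover the coefficients of the quintic.

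As a first consistency test, the trace should equal $3|F|$. For $\calF_{16}$ this requires $48=0+5+3+21+19$, where $19$ is the sum of the roots of $p_{16}$. For $\calF_{15}$ it requires $45=2+8+12+7+16$, where $16$ is the sum of the roots of $p_{15}$. Both sums hold, which supports the stated factorizations.

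Finally, the identity $p_{15}(x)=-p_{16}(7-x)$ is a direct polynomial expansion. I would substitute $7-x$ into $p_{16}$ and compare coefficients.

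I expect the main obstacle to be doing the determinant-level verification rigorously without a computer. The cleanest certificate is to give explicit eigenvector bases for the linear factors, together with an explicit basis of $W^\perp$ and the $5\times5$ matrix of $M_F$ on it, and then check that this small matrix has characteristic polynomial $p_{16}$ (respectively $p_{15}$). That check is just a $5\times5$ determinant. In practice I would rely on the exact integer arithmetic recorded in the appendix.
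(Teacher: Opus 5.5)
Your plan is correct, but it certifies the factorization by a different route from the paper. The paper records the two integer Gram matrices and evaluates $\det(xI-M_{\calF})$ directly by Bareiss fraction-free elimination over $\mathbb Z[x]$. That computation is uniform and mechanical and produces the whole polynomial at once, but it explains none of the factors and is realistically machine work.

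You instead split $\R^{\calF}=W\oplus W^\perp$ along explicitly exhibited eigenvectors. Only the irreducible part then goes into a $5\times5$ compression, or into Newton's identities for $\tr(M_{\calF}^k)$, $k\le5$. This gives a certificate whose pieces can each be checked by a single matrix--vector product, and the pieces have structural meaning:
\begin{itemize}
\item Your guess about the kernels is right. They are spanned by the boundaries of the tetrahedra $\{1,2,3,6\},\{1,2,5,6\}$ for $\calF_{16}$ and $\{0,2,3,5\},\{0,2,3,6\},\{1,2,3,5\},\{1,2,3,6\}$ for $\calF_{15}$. These are exactly the tetrahedra with all faces in the family.
\item The eigenvalue-$7$ eigenvectors are signed ridge vectors: $\mathbf y_{\{1,2\}},\mathbf y_{\{1,3\}},\mathbf y_{\{1,5\}}$ for $\calF_{16}$, and $\mathbf y_{\{2,3\}}$ for $\calF_{15}$. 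The last one is precisely the vector the paper later needs for ridge-whiskering.
\end{itemize}

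The price is extra bookkeeping. You still have to produce eigenvectors for $1,3$ (resp.\ $2,4,6$), a rational basis of $W^\perp$, and the compressed matrix on it.

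One simplification: you need not determine $\dim\ker(M_{\calF}-cI)$ exactly. Once the exhibited eigenvectors span a $W$ of dimension $11$ (resp.\ $10$), the characteristic polynomial is multiplicative over the invariant decomposition $W\oplus W^\perp$. That alone makes the stated product exact. The trace identity is then, as you say, only a consistency check.
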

	
	\begin{proof}
		Order the facets as displayed above and retain the vertex order
		\(0<1<\cdots<6\).  For either
		\(\calF\in\{\calF_{15},\calF_{16}\}\) and
		\(\sigma,\tau\in\calF\), the entry
		\((M_{\calF})_{\sigma,\tau}=\ip{\bd e_\sigma}{\bd e_\tau}\) is \(3\) on the
		diagonal and is \(0\) or \(\pm1\) off the diagonal, with a nonzero entry
		exactly when \(\sigma\) and \(\tau\) share an edge.  This rule determines
		the two integer matrices uniquely; they are recorded in
		Appendix~\ref{app:seed-verification} for direct verification.
		Bareiss fraction-free elimination of \(xI-M_{\calF}\), entirely in
		\(\mathbb Z[x]\), gives
		the desired result.  Substitution gives
		\(p_{15}(x)=-p_{16}(7-x)\).
	\end{proof}
	The following root isolation establishes the necessary bounds for the eigenvalues of the sixteen-facet seed.
	\begin{lemma}\label{lem:seed-roots}
		The roots of \(p_{16}\), in increasing order, satisfy
		\[
		\theta_1\in(0,1),\quad
		\theta_2\in(2,3),\quad
		\theta_3\in(4,5),\quad
		\theta_4\in(5,6),\quad
		\theta_5\in(6,7),
		\]
		and \(\theta_4+\theta_5>12\).
	\end{lemma}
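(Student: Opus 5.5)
The plan is to combine exact integer evaluation with the intermediate value theorem, then use Vieta's formulas and a few refined rational evaluations for the sum bound.

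\emph{Root intervals.} I would evaluate \(p_{16}\) exactly at \(x=0,1,\ldots,7\). A direct computation in \(\mathbb Z\) gives
\[
p_{16}(0)=-175,\ p_{16}(1)=54,\ p_{16}(2)=19,\ p_{16}(3)=-16,\ p_{16}(4)=-3,\ p_{16}(5)=10,\ p_{16}(6)=-1,\ p_{16}(7)=84 .
\]
These show sign changes on \((0,1)\), \((2,3)\), \((4,5)\), \((5,6)\) and \((6,7)\). The intermediate value theorem then gives a root in each of the five intervals. Since \(\deg p_{16}=5\), these are all the roots; each is simple and all are real. This matches the fact that \(p_{16}\) divides the characteristic polynomial of the symmetric matrix \(M_{\calF_{16}}\) (Lemma~\ref{lem:seed-polynomials}). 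It yields the displayed localization \(\theta_1<\theta_2<\cdots<\theta_5\) in the stated intervals.

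\emph{The sum bound.} By Vieta, \(\theta_1+\cdots+\theta_5=19\), so \(\theta_4+\theta_5>12\) is equivalent to \(\theta_1+\theta_2+\theta_3<7\). The integer brackets alone only give \(\theta_1+\theta_2+\theta_3<9\) and \(\theta_4+\theta_5>11\), so I will refine. The small values \(p_{16}(6)=-1\) and \(p_{16}(4)=-3\) suggest \(\theta_5\) lies just above \(6\) and \(\theta_4\) close to \(6\) from below. I would therefore evaluate \(p_{16}\) exactly at rationals near \(6\), say \(x=5.9\) and then \(x=5.95\) or \(x=6.05\) as needed, clearing denominators so the computation stays in \(\mathbb Z\). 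The goal is to find rationals \(c_4<\theta_4\) and \(c_5<\theta_5\), certified by sign changes, with \(c_4+c_5\ge12\). For example, it would suffice to show \(p_{16}(5.9)>0\), which gives \(\theta_4\in(5.9,6)\), together with \(p_{16}(6.1)<0\), which gives \(\theta_5>6.1\).

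If the mass sits elsewhere, I would apply the same procedure on the other side. That means certifying upper bounds \(\theta_1<u_1\), \(\theta_2<u_2\), \(\theta_3<u_3\) by sign changes with \(u_1+u_2+u_3\le7\), for instance near the small value \(p_{16}(4)=-3\).

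\emph{Main obstacle.} The main difficulty is choosing evaluation points fine enough that the certified bounds actually sum past \(12\). If \(\theta_4+\theta_5-12\) is tiny, coarse decimal points will not suffice. In that case I would fall back on a cleaner certificate: form the degree-\(10\) polynomial whose roots are the pairwise sums \(\theta_i+\theta_j\), computed exactly via resultants. I would then count its real roots exceeding \(12\) with a Sturm sequence, or check sign changes around \(12\). The only pair that can sum above \(11\) is \(\{\theta_4,\theta_5\}\), so any root above \(12\) must be \(\theta_4+\theta_5\).
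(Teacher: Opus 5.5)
Your localization of the five roots is the same as the paper's argument: exact values of $p_{16}$ at $0,\dots,7$, the intermediate value theorem, and the degree count. It is correct.

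The gap is in the bound $\theta_4+\theta_5>12$. You describe a search for a sign-change certificate rather than giving one, and the concrete instance you propose fails. In fact $\theta_4\approx5.8364$ and $\theta_5\approx6.1638$, so $p_{16}(5.9)\approx-0.566<0$, and the pair $5.9,\,6.1$ certifies nothing useful. More fundamentally, the margin is only $d_*=\theta_4+\theta_5-12\approx2\times10^{-4}$. Any rational lower bounds $c_4<\theta_4$, $c_5<\theta_5$ with $c_4+c_5\ge12$ must therefore have total slack $(\theta_4-c_4)+(\theta_5-c_5)<d_*$. Points such as $5.95$ or $6.05$ cannot work, and whether your plan succeeds depends entirely on a computation you have not carried out. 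The same applies to the resultant/Sturm fallback, which is only named. Also, your remark that only $\{\theta_4,\theta_5\}$ can sum above $11$ does not follow from the brackets, which only give $\theta_3+\theta_5\in(10,12)$. What is true, and sufficient, is that no other pair can reach $12$.

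The plan can be completed. Put $g(t)=p_{16}(6+t)=t^5+11t^4+37t^3+37t^2-t-1$. Exact evaluation gives $g(0.1638)\approx-4.3\times10^{-4}<0$ and $g(-0.1638)\approx1.7\times10^{-3}>0$. Together with $g(0)=-1$ and $g(1)=84$, this yields $\theta_4>5.8362$ and $\theta_5>6.1638$. However, the admissible symmetric points $6\pm s$ require $s$ in the narrow window $(6-\theta_4,\theta_5-6)\approx(0.16363,0.16383)$, so this must genuinely be computed.

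The paper avoids numerics altogether. Setting $h(t)=t^4+37t^2-1$, it uses the exact identities $g(t)=(1+t)h(t)+10t^4$ and $g(t)-g(-t)=2t\,h(t)$. At $a=\theta_5-6\in(0,1)$, the first identity gives $h(a)=-10a^4/(1+a)<0$. The second then gives $g(-a)=20a^5/(1+a)>0$. Since $g(0)=-1<0$, the unique root $\theta_4-6$ of $g$ in $(-1,0)$ lies in $(-a,0)$, which is $\theta_4+\theta_5>12$. This is your symmetric-certificate idea, but evaluated at the root $\theta_5$ itself, so no approximation or resultant computation is needed.
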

	
	\begin{proof}
		Direct substitution gives
		\[
		\begin{array}{c|rrrrrrrr}
			x&0&1&2&3&4&5&6&7\\ \hline
			p_{16}(x)&-175&54&19&-16&-3&10&-1&84.
		\end{array}
		\]
		The intermediate value theorem produces a root in each of
		\((0,1)\), \((2,3)\), \((4,5)\), \((5,6)\), and \((6,7)\).
		These five intervals are disjoint, and \(p_{16}\) has degree five, so
		each interval contains exactly one root and there are no others.
		For the final assertion, set
		\[
		a=\theta_5-6\in(0,1),
		\qquad
		g(t)=p_{16}(6+t),
		\qquad
		h(t)=t^4+37t^2-1.
		\]
		The explicit polynomial $g(t)=t^5+11t^4+37t^3+37t^2-t-1$
		satisfies
		\[
		g(t)=(1+t)h(t)+10t^4,
		\qquad
		g(t)-g(-t)=2t h(t).
		\]
		
		Since \(g(a)=0\), the first identity gives
		$h(a)=-\frac{10a^4}{1+a}<0.$
		The second identity therefore yields
		$g(-a)=-2a h(a)=\frac{20a^5}{1+a}>0.$
		Because \(g(0)=-1<0\), the intermediate value theorem gives a root of
		\(g\) in \((-a,0)\).  The first part shows that the unique root of
		\(g\) in \((-1,0)\) is \(\theta_4-6\).  Hence
		$\theta_4-6>-a=6-\theta_5,$
		which is equivalent to \(\theta_4+\theta_5>12\).
	\end{proof}
	Set
$
d_*=\theta_4+\theta_5-12>0.
$
	Numerically, \(d_*\approx1.9634\times10^{-4}\), which indicates the
	small scale of the violation; its positivity follows exactly from
	Lemma~\ref{lem:seed-roots}, not from floating-point computation.
Combining the characteristic polynomials with the root bounds,
we explicitly determine the partial sums at the critical indices.
	
	\begin{proposition}\label{prop:seed-defects}
		The following identities hold:
		\begin{align*}
			s_5(\calF_{16})&=D_5(\calF_{16})+d_*=33+d_*,\\
			s_8(\calF_{15})&=D_8(\calF_{15})+d_*=41+d_*,\\
			s_9(\calF_{15})&=D_9(\calF_{15})+d_*=43+d_*.
		\end{align*}
	\end{proposition}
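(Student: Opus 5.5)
The plan is to read off the ordered spectra from Lemma~\ref{lem:seed-polynomials} and place the roots of \(p_{16}\) and \(p_{15}\) among the integer eigenvalues using Lemma~\ref{lem:seed-roots}. The partial sums then follow from Vieta's formula for \(p_{16}\), and the \(D_r\) values are computed directly from the listed degree sequences. Recall that \(s_r(F)=s_r(M_F)\), so it suffices to work with \(M_F\).

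\emph{The sixteen-facet seed.} By Lemma~\ref{lem:seed-polynomials} the spectrum of \(M_{\calF_{16}}\) is \(0^{2},1^{5},3,7^{3}\) together with \(\theta_1,\dots,\theta_5\); this gives \(2+5+1+3+5=16\) eigenvalues. Lemma~\ref{lem:seed-roots} gives \(\theta_5\in(6,7)\), \(\theta_4\in(5,6)\), and all other eigenvalues are at most \(5\). Hence the five largest eigenvalues are \(7,7,7,\theta_5,\theta_4\), and
\[
s_5(\calF_{16})=21+\theta_4+\theta_5=33+d_*.
\]
The degree sequence is \((4,13,8,6,4,6,7)\). Capping each degree at \(5\) gives \(4+5+5+5+4+5+5=33=D_5(\calF_{16})\).

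\emph{The fifteen-facet seed.} Since \(p_{15}(x)=-p_{16}(7-x)\), the roots of \(p_{15}\) are \(7-\theta_i\). By Lemma~\ref{lem:seed-roots} these lie in
\[
7-\theta_1\in(6,7),\quad 7-\theta_2\in(4,5),\quad 7-\theta_3\in(2,3),\quad 7-\theta_4\in(1,2),\quad 7-\theta_5\in(0,1).
\]
Together with \(0^4,2,4^2,6^2,7\) from Lemma~\ref{lem:seed-polynomials}, this gives \(15\) eigenvalues, ordered as
\[
7>7-\theta_1>6=6>7-\theta_2>4=4>7-\theta_3>2>7-\theta_4>7-\theta_5>0.
\]
Therefore
\[
s_8(\calF_{15})=7+12+8+21-(\theta_1+\theta_2+\theta_3)=48-(\theta_1+\theta_2+\theta_3).
\]
By Vieta, the root sum of \(p_{16}\) is \(19\), so
\[
\theta_1+\theta_2+\theta_3=19-(\theta_4+\theta_5)=7-d_*,
\]
and hence \(s_8(\calF_{15})=41+d_*\). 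The ninth largest eigenvalue is exactly \(2\), because \(7-\theta_4<2\) follows from \(\theta_4>5\). Thus \(s_9(\calF_{15})=43+d_*\).

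For the degree side, the degree sequence is \((5,5,10,10,2,7,6)\). Capping at \(8\) gives \(5+5+8+8+2+7+6=41\), and capping at \(9\) gives \(5+5+9+9+2+7+6=43\).

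\emph{Main obstacle.} The only delicate point is the ordering near the cut-offs. We need strictness in \(\theta_5<7\) and \(\theta_4>5\), which ensures that \(7-\theta_4<2<7-\theta_3\) and that \(\theta_4\) exceeds all remaining eigenvalues of \(\calF_{16}\). Both come from the open intervals in Lemma~\ref{lem:seed-roots}. The positivity of \(d_*\) is likewise inherited from that lemma.
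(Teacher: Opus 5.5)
Your proof is correct and follows the paper's own argument essentially step for step: you read the ordered spectra off Lemma~\ref{lem:seed-polynomials}, place the roots of $p_{16}$ and $p_{15}$ (the latter being $7-\theta_i$) using Lemma~\ref{lem:seed-roots}, invoke $\sum_i\theta_i=19$, and cap the degree sequences. One small correction to your closing remark: the separations that matter are $\theta_3<5<\theta_4$, not $\theta_5<7$, and even these need not be strict, since ties do not change Ky Fan sums.
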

	
	\begin{proof}
		By Lemmas~\ref{lem:seed-polynomials} and \ref{lem:seed-roots}, the five
		largest eigenvalues of \(M_{\calF_{16}}\) are
		\(7,7,7,\theta_5,\theta_4\).  Hence
		\[
		s_5(\calF_{16})=21+\theta_4+\theta_5=33+d_*.
		\]
		The displayed degree sequence gives \(D_5(\calF_{16})=33\).
		The roots of \(p_{15}\) are \(7-\theta_i\).  The eight largest
		eigenvalues of \(M_{\calF_{15}}\) are
		\[
		7,\ 7-\theta_1,\ 6,\ 6,\ 7-\theta_2,\ 4,\ 4,\ 7-\theta_3.
		\]
		Using \(\sum_i\theta_i=19\), their sum is \(41+d_*\).  The next
		eigenvalue is \(2\), so \(s_9(\calF_{15})=43+d_*\).  The degree
		sequence gives \(D_8(\calF_{15})=41\) and \(D_9(\calF_{15})=43\).
	\end{proof}

	The two seed gaps will be propagated by ridge-whiskering and set-complement
	duality.  We develop both operations in the generality needed below.
For every face \(\rho\subseteq V\), define its \(F\)-degree by
$d_F(\rho)=|\{\sigma\in F:\rho\subseteq\sigma\}|.$
For a vertex \(\rho=\{v\}\), this agrees with the previously defined
vertex degree \(d_F(v)\).
Let \(F\subseteq\binom Vq\) and \(\eta\in\binom V{q-1}\).  Using the
	oriented incidence coefficient defined in Section~\ref{sec:notation}, put
	\[
	\mathbf y_\eta
	=\bigl(\varepsilon(\eta,\sigma)\bigr)_{\sigma\in F}
	\in\R^F.
	\]
	Thus \(\norm{\mathbf y_\eta}^2=d_F(\eta)\).  For \(t\ge0\), choose pairwise
	distinct fresh vertices \(w_1,\ldots,w_t\notin V\) and set
	\[
	F(t)=F\cup\{\eta\cup\{w_i\}:1\le i\le t\}.
	\]
	
	\begin{lemma}\label{lem:ridge-transfer}
		Assume \(d_F(\eta)>0\) and $
		M_F\mathbf y_\eta=(d_F(\eta)+q-1)\mathbf y_\eta.$
		Then
		\[
		\det(xI-M_{F(t)})
		=\det(xI-M_F)
		\frac{(x-(d_F(\eta)+q-1+t))(x-(q-1))^t}
		{x-(d_F(\eta)+q-1)}.
		\]
	\end{lemma}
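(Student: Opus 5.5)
Write \(d=d_F(\eta)\) and \(\mu=d+q-1\). The plan is to write \(M_{F(t)}\) in block form and then split \(\R^{F}\oplus\R^t\) into invariant subspaces on which the spectrum can be read off directly.

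\emph{Block form.} Order the facets of \(F(t)\) as \(F\) first, then the whiskers \(\omega_i=\eta\cup\{w_i\}\). For \(\sigma\in F\), the facets \(\sigma\) and \(\omega_i\) share a ridge only if that ridge is \(\eta\), because every other ridge of \(\omega_i\) contains the fresh vertex \(w_i\). Hence
\[
\ip{\bd e_\sigma}{\bd e_{\omega_i}}=\varepsilon(\eta,\sigma)\,\varepsilon(\eta,\omega_i).
\]
Likewise two whiskers \(\omega_i,\omega_j\) with \(i\ne j\) share only \(\eta\), so their entry is \(\varepsilon(\eta,\omega_i)\varepsilon(\eta,\omega_j)\). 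The diagonal entries are \(q\). Conjugating by the diagonal sign matrix with entries \(\varepsilon(\eta,\omega_i)=\pm1\) on the whisker coordinates, which is an orthogonal similarity, gives
\[
M_{F(t)}\sim\begin{pmatrix} M_F & \mathbf y_\eta\one^{\mathsf T}\\ \one\mathbf y_\eta^{\mathsf T} & (q-1)I_t+\one\one^{\mathsf T}\end{pmatrix}.
\]

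\emph{Invariant decomposition.} Consider three pieces.
\begin{enumerate}[label=(\arabic*)]
\item Vectors \((0,\mathbf u)\) with \(\mathbf u\perp\one\). They are eigenvectors with eigenvalue \(q-1\), and they span a space of dimension \(t-1\).
\item The subspace \(W=\Span\{(\mathbf y_\eta,0),(0,\one)\}\). It is invariant: \((\mathbf y_\eta,0)\mapsto(\mu\mathbf y_\eta,\,d\,\one)\), using the hypothesis and \(\norm{\mathbf y_\eta}^2=d\). Also \((0,\one)\mapsto(t\mathbf y_\eta,\,(q-1+t)\one)\).
\item The space of vectors \((\mathbf x,0)\) with \(\mathbf x\perp\mathbf y_\eta\). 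Here the off-diagonal block \(\one\mathbf y_\eta^{\mathsf T}\) kills \(\mathbf x\), and \(M_F\) preserves \(\mathbf y_\eta^\perp\) because \(\mathbf y_\eta\) is an eigenvector of the symmetric matrix \(M_F\). So this piece has characteristic polynomial \(\det(xI-M_F)/(x-\mu)\).
\end{enumerate}
The three pieces are mutually orthogonal and have total dimension \(2+(t-1)+(|F|-1)\), as required.

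\emph{The \(2\times2\) block on \(W\).} In the basis above, the matrix on \(W\) is
\[
\begin{pmatrix}\mu & t\\ d & q-1+t\end{pmatrix}.
\]
Its trace is \(\mu+q-1+t\) and its determinant is \(\mu(q-1+t)-dt=(q-1)(\mu+t)\). The eigenvalues are therefore \(\mu+t\) and \(q-1\). Multiplying the three factors gives the claimed identity.

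The case \(t=0\) is trivial, and for \(t\ge1\) the argument above applies. The only delicate point is the sign bookkeeping in the off-diagonal entries, that is, checking that all cross terms factor through \(\varepsilon(\eta,\cdot)\) so that the diagonal sign conjugation is consistent. This follows from the shared-ridge observation in the block-form step.
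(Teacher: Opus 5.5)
Your proposal is correct and follows essentially the same route as the paper: the same block form after normalizing the whisker signs, and the same three mutually orthogonal invariant pieces \(\mathbf y_\eta^\perp\oplus 0\), \(0\oplus\one^\perp\), and \(\Span\{(\mathbf y_\eta,0),(0,\one)\}\). The differences are cosmetic. You conjugate by a diagonal sign matrix where the paper reorients the whiskers, and you compute the \(2\times2\) block in the non-orthonormal basis via trace and determinant, whereas the paper uses the orthonormal basis with off-diagonal entry \(\sqrt{td}\).
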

\begin{proof}
Put \(d=d_F(\eta)\). The case \(t=0\) is immediate, so assume
\(t\geq1\). Orient each new facet \(\eta\cup\{w_i\}\) so that its
incidence coefficient at \(\eta\) is \(+1\); changing facet orientations
does not change the characteristic polynomial. Since the \(w_i\) are
fresh, an old facet and a new facet can share only \(\eta\), while two
distinct new facets share exactly \(\eta\). Thus
\[
M_{F(t)}
=
\begin{pmatrix}
M_F&\mathbf y_\eta\mathbf 1_t^{\mathsf T}\\
\mathbf 1_t\mathbf y_\eta^{\mathsf T}
&(q-1)I_t+\mathbf 1_t\mathbf 1_t^{\mathsf T}
\end{pmatrix}.
\]
If \(\mathbf z\perp\mathbf y_\eta\), then symmetry of \(M_F\) and the
hypothesis \(M_F\mathbf y_\eta=(d+q-1)\mathbf y_\eta\) give
\(\langle M_F\mathbf z,\mathbf y_\eta\rangle=0\). Hence the displayed
matrix sends \((\mathbf z,0)\) to \((M_F\mathbf z,0)\). Similarly, if
\(\mathbf u\perp\mathbf 1_t\), it sends \((0,\mathbf u)\) to
\((0,(q-1)\mathbf u)\). These two subspaces are therefore invariant.
Their orthogonal complement is two-dimensional, with orthonormal basis
$\left(\frac{\mathbf y_\eta}{\sqrt d},0\right)$ and $
\left(0,\frac{\mathbf 1_t}{\sqrt t}\right).$
In this basis, the diagonal entries are \(d+q-1\) and \(q-1+t\), and
the off-diagonal entry is \(\sqrt{td}\). Consequently,
\[
M_{F(t)}
\sim
\left.M_F\right|_{\mathbf y_\eta^\perp}
\oplus(q-1)I_{t-1}
\oplus
\begin{pmatrix}
d+q-1&\sqrt{td}\\
\sqrt{td}&q-1+t
\end{pmatrix},
\qquad
M_F\sim
\left.M_F\right|_{\mathbf y_\eta^\perp}\oplus(d+q-1),
\]
where \(\sim\) denotes orthogonal similarity.
The characteristic polynomial of the two-dimensional block is $
(x-(q-1))(x-(d+q-1+t)).$
Taking characteristic polynomials in the two orthogonal decompositions
therefore gives
\[
\det(xI-M_{F(t)})
=
\det(xI-M_F)
\frac{(x-(d+q-1+t))(x-(q-1))^t}
{x-(d+q-1)},
\]
as required.
\end{proof}

	Fix the ambient vertex set \(V\).  Let \(|V|=n\), let \(1\le q<n\), let
	\(F\subseteq\binom Vq\), and define its set-complement family by
	\[
	F^\star=\{V\setminus\sigma:\sigma\in F\}
	\subseteq\binom V{n-q}.
	\]
	This operation depends on the specified ambient set \(V\).  In particular,
	isolated vertices may be deleted when studying \(s_r(F)\) and \(D_r(F)\),
	but they must not be deleted silently before forming \(F^\star\).
	The following is Proposition~4.2 of Duval and Reiner
	\cite{DuvalReiner2002}, restated in our notation.
	
	\begin{lemma}\label{lem:hodge-star}
		Write \(V=\{v_1<\cdots<v_n\}\), and index \(M_{F^\star}\) by \(F\)
		via \(\sigma\leftrightarrow V\setminus\sigma\).  Then
		$M_{F^\star}=S(nI-M_F)S,$ where $
		S=\diag\left((-1)^{\sum_{v_i\in\sigma}i}:\sigma\in F\right).$
	\end{lemma}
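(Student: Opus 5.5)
We prove the identity \(M_{F^\star}=S(nI-M_F)S\) entrywise, indexing both matrices by \(F\) through \(\sigma\leftrightarrow\sigma^\star:=V\setminus\sigma\). Two entries must be matched. On the diagonal, \((M_{F^\star})_{\sigma^\star\sigma^\star}=|\sigma^\star|=n-q\), while \((nI-M_F)_{\sigma\sigma}=n-q\); conjugation by the diagonal sign matrix \(S\) leaves diagonal entries unchanged, so these agree.

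Off the diagonal, for distinct \(\sigma,\tau\in F\), we have \(\ip{\bd e_{\sigma^\star}}{\bd e_{\tau^\star}}\neq0\) exactly when \(|\sigma^\star\cap\tau^\star|=n-q-1\). Since \(|\sigma^\star\cup\tau^\star|=n-|\sigma\cap\tau|\), this is equivalent to \(|\sigma\cap\tau|=q-1\), which is exactly when \((M_F)_{\sigma\tau}\neq0\). So the supports coincide, and both nonzero entries are \(\pm1\). It therefore remains to verify the sign relation
\[
\varepsilon(\sigma^\star\cap\tau^\star,\sigma^\star)\,\varepsilon(\sigma^\star\cap\tau^\star,\tau^\star)
=-(-1)^{\sum_{v_i\in\sigma}i+\sum_{v_i\in\tau}i}\,\varepsilon(\sigma\cap\tau,\sigma)\,\varepsilon(\sigma\cap\tau,\tau).
\]

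To prove it, write \(\rho=\sigma\cap\tau\), \(\sigma=\rho\cup\{v_a\}\) and \(\tau=\rho\cup\{v_b\}\) with \(a\ne b\). Then \(\sigma^\star=\mu\cup\{v_b\}\) and \(\tau^\star=\mu\cup\{v_a\}\), where \(\mu=V\setminus(\rho\cup\{v_a,v_b\})\). Each incidence sign \(\varepsilon(\rho,\rho\cup\{v_a\})\) equals \((-1)^{k}\), where \(k=|\{u\in\rho:u<v_a\}|\), and the other three incidence signs have the same form. Let \(N_{<a}\) denote the number of vertices below \(v_a\).

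Next we count. The elements below \(v_a\) split among \(\rho\), \(\mu\), and possibly \(v_b\). Hence
\[
|\{u\in\rho:u<v_a\}|+|\{u\in\mu:u<v_a\}|=a-1-[b<a],
\]
and the analogous identity holds with \(a\) and \(b\) swapped. Multiplying all four incidence signs therefore gives
\[
(-1)^{(a-1-[b<a])+(b-1-[a<b])}=(-1)^{a+b-1},
\]
because exactly one of the indicators \([b<a]\) and \([a<b]\) equals \(1\).

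Finally, the \(S\)-factor is \((-1)^{\sum_\sigma i+\sum_\tau i}=(-1)^{2\sum_\rho i+a+b}=(-1)^{a+b}\). So the required relation reduces to the identity \((-1)^{a+b-1}=-(-1)^{a+b}\), which holds. This completes the entrywise verification.

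The main obstacle is the sign bookkeeping in this last step: keeping the \(\pm\) conventions consistent and remembering that \(v_b\) itself lies below \(v_a\) when \(b<a\). The global minus sign from \(nI-M_F\) is absorbed precisely by the \(-1\) that those two indicator terms produce.
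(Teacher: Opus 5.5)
Your proof is correct. The support comparison via $|\sigma^\star\cap\tau^\star|=n-|\sigma\cup\tau|$ is right, and the count $|\{u\in\rho:u<v_a\}|+|\{u\in\mu:u<v_a\}|=a-1-[b<a]$ correctly accounts for $v_b$. The resulting product $(-1)^{a+b-1}$ of the four incidence signs is exactly $-s_\sigma s_\tau$, writing $s_\sigma$ for the diagonal entries of $S$, and this is what the off-diagonal entries of $S(nI-M_F)S$ require.

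Your route differs from the paper's. The paper gives no argument of its own here; it imports the identity as Proposition~4.2 of Duval--Reiner, whose mechanism (as the label suggests) is the Hodge star $e_\sigma\mapsto\pm e_{V\setminus\sigma}$ on the exterior algebra of the full simplex on $V$. That map intertwines $\partial$ with the full-simplex coboundary $\partial^{\mathsf T}$. It therefore carries $\partial^{\mathsf T}\partial$ on $q$-sets to $\partial\partial^{\mathsf T}$ on $(n-q)$-sets. The identity $\partial\partial^{\mathsf T}+\partial^{\mathsf T}\partial=nI$ (the same one proved for $\bd_Q$ in Lemma~\ref{lem:complete-simplex}) turns this into $nI-\partial^{\mathsf T}\partial$. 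Restricting to the coordinates indexed by $F$ and $F^\star$ then gives the lemma. In that picture $S$ appears as the shuffle sign of $(\sigma,V\setminus\sigma)$, which equals $(-1)^{\sum_{v_i\in\sigma}i}$ up to a global constant that cancels in $S(\cdot)S$.

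Your verification is elementary and fully self-contained, and it checks the exact sign matrix directly. The Hodge-star route explains why complementation reflects the spectrum, $\lambda\mapsto n-\lambda$, and treats all sizes uniformly, at the cost of exterior-algebra sign conventions.
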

	
By exploiting the complementarity of the spectra,
we obtain the following transfer identity for the spectral defect.
\begin{lemma}\label{lem:defect-transfer}
Let \(f=|F|\). If \(1\le r<f\), then
$
\Delta_{f-r}(F^\star)=\Delta_r(F).
$
\end{lemma}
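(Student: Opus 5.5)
The plan is to compute the two sides of $\Delta_{f-r}(F^\star)=\Delta_r(F)$ separately. On the spectral side we use Lemma~\ref{lem:hodge-star}; on the degree side we use the fact that complementation replaces each vertex degree $d$ by $f-d$.

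\emph{Spectral side.} By Lemma~\ref{lem:hodge-star}, $M_{F^\star}$ is orthogonally similar to $nI-M_F$, since $S$ is a diagonal sign matrix with $S^2=I$. Let $\mu_1\ge\cdots\ge\mu_f$ be the eigenvalues of the $f\times f$ matrix $M_F$, so that $s_j(F)=\sum_{i\le j}\mu_i$ for $j\le f$. The eigenvalues of $M_{F^\star}$ in decreasing order are then $n-\mu_f\ge\cdots\ge n-\mu_1$. These are all nonnegative, because $M_{F^\star}\succeq0$, so no zero padding interferes. Summing the $f-r$ largest gives
\[
s_{f-r}(F^\star)=(f-r)n-\Bigl(\tr M_F-s_r(F)\Bigr)=(f-r)n-qf+s_r(F),
\]
using $\tr M_F=qf$. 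Here $r\ge1$ and $f-r\ge1$ guarantee that both indices lie in the range $1,\dots,f$ where this bookkeeping is valid.

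\emph{Degree side.} For each $v\in V$ we have $d_{F^\star}(v)=f-d_F(v)$, and the sum runs over the full ambient $V$. Isolated vertices of $F$ have degree $f$ in $F^\star$, and this is consistent with the formula. We need the elementary identity
\[
\min\{f-d,\,f-r\}=f-\max\{d,r\}=f-d-r+\min\{d,r\},
\]
which follows from $\max\{d,r\}+\min\{d,r\}=d+r$. Summing over $v\in V$ and using $\sum_v d_F(v)=qf$ gives
\[
D_{f-r}(F^\star)=nf-qf-nr+D_r(F)=(f-r)n-qf+D_r(F).
\]

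Subtracting the two displays, the common term $(f-r)n-qf$ cancels, which gives $\Delta_{f-r}(F^\star)=s_r(F)-D_r(F)=\Delta_r(F)$. The argument is routine; the only point requiring care is the indexing. The spectrum of $M_{F^\star}$ must be taken with exactly $f$ entries, including any zeros of $M_F$, which become $n$ in $M_{F^\star}$. Likewise the degree sum must be over the full ambient $V$, consistent with the remark preceding Lemma~\ref{lem:hodge-star}.
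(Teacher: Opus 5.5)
Your proof is correct and follows the paper's argument. Both use Lemma~\ref{lem:hodge-star} to obtain $s_{f-r}(F^\star)=(f-r)n-qf+s_r(F)$, and both use $d_{F^\star}(v)=f-d_F(v)$ summed over the full ambient $V$ to obtain the matching expression for $D_{f-r}(F^\star)$. The only cosmetic difference is that you apply $\min\{f-d,f-r\}=f-d-r+\min\{d,r\}$ directly, whereas the paper passes through $(d-r)_+$.
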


\begin{proof}
Let \(f=|F|\) and \(\lambda_1\geq\cdots\geq\lambda_f\) be the eigenvalues of \(M_F\),
including zeros. By Lemma~\ref{lem:hodge-star}, the eigenvalues of
\(M_{F^\star}\), in decreasing order, are
\(n-\lambda_f,\ldots,n-\lambda_1\). Since
\(\tr(M_F)=qf\), it follows that
\begin{equation}\label{a}
s_{f-r}(F^\star)
=n(f-r)-\sum_{i=r+1}^f\lambda_i
=(n-q)f-nr+s_r(F).
\end{equation}
For every \(v\in V\), we have
\(d_{F^\star}(v)=f-d_F(v)\). Moreover, for \(0\leq d\leq f\),
\[
\min\{f-d,f-r\}=f-r-(d-r)_+,
\qquad
\min\{d,r\}=d-(d-r)_+.
\]
The second identity and \(\sum_{v\in V}d_F(v)=qf\) give
\[
\sum_{v\in V}(d_F(v)-r)_+=qf-D_r(F).
\]
Applying the first identity with \(d=d_F(v)\), we obtain
\begin{equation}\label{b}
D_{f-r}(F^\star)
=n(f-r)-\sum_{v\in V}(d_F(v)-r)_+
=(n-q)f-nr+D_r(F).
\end{equation}
Equalities \eqref{a}-\eqref{b} give
\(\Delta_{f-r}(F^\star)=\Delta_r(F)\), as required.
\end{proof}
	For \(\calF_{15}\), take the ridge \(\eta=\{2,3\}\).  In the displayed
	facet order, the signed ridge-incidence vector defined above is
	\[
	\mathbf y_\eta=(1,0,0,0,0,1,0,0,0,0,1,1,1,0,0)^{\mathsf T}.
	\]
	Exact multiplication gives
	\begin{equation}\label{eq:F15-ridge-vector}
		\norm{\mathbf y_\eta}^2=5,
		\qquad
		M_{\calF_{15}}\mathbf y_\eta=7\mathbf y_\eta.
	\end{equation}
	For \(t\geq0\), let \(w_1,\ldots,w_t\) be pairwise distinct vertices
outside \(\{0,1,\ldots,6\}\), and define
\[
\calF_{15}(t)
=
\calF_{15}\cup\{\{2,3,w_i\}:1\leq i\leq t\}.
\]
	
	\begin{proposition}\label{prop:whiskered-seed}
		For every \(t\ge0\),
		\begin{align}
			\det(xI-M_{\calF_{15}(t)})
			&=x^4(x-2)^{t+1}(x-4)^2(x-6)^2(x-t-7)p_{15}(x),
			\label{eq:whiskered-charpoly}\\
			\Delta_8(\calF_{15}(t))&=d_*.\label{eq:whiskered-defect}
		\end{align}
	\end{proposition}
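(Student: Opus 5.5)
The plan is to obtain \eqref{eq:whiskered-charpoly} directly from Lemma~\ref{lem:ridge-transfer}, and then to read off \eqref{eq:whiskered-defect} from the resulting spectrum together with the degree sequence of \(\calF_{15}(t)\).

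For the characteristic polynomial, take \(F=\calF_{15}\), \(q=3\), and \(\eta=\{2,3\}\). By \eqref{eq:F15-ridge-vector}, \(d_F(\eta)=5>0\) and \(M_F\mathbf y_\eta=7\mathbf y_\eta=(d_F(\eta)+q-1)\mathbf y_\eta\), so the hypotheses of Lemma~\ref{lem:ridge-transfer} hold. The lemma multiplies \(\det(xI-M_{\calF_{15}})\) by \((x-(7+t))(x-2)^t/(x-7)\). Lemma~\ref{lem:seed-polynomials} gives \(\det(xI-M_{\calF_{15}})=x^4(x-2)(x-4)^2(x-6)^2(x-7)p_{15}(x)\). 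The factor \(x-7\) cancels and the power of \(x-2\) becomes \(t+1\), which yields \eqref{eq:whiskered-charpoly}.

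For the defect, list the eigenvalues. The roots of \(p_{15}\) are \(7-\theta_i\), lying in \((6,7),(4,5),(2,3),(1,2),(0,1)\) by Lemma~\ref{lem:seed-roots}. The remaining eigenvalues are \(t+7\), then \(6,6,4,4\), then \(2\) with multiplicity \(t+1\), and zeros. The eight largest are therefore \(t+7,\ 7-\theta_1,\ 6,6,\ 7-\theta_2,\ 4,4,\ 7-\theta_3\). Compared with Proposition~\ref{prop:seed-defects}, the only change is that \(7\) is replaced by \(t+7\), so \(s_8(\calF_{15}(t))=41+t+d_*\). One should check that \(7-\theta_3\in(2,3)\) still beats the eigenvalue \(2\); the sum is unaffected in any case.

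On the degree side, adding the whiskers raises \(d(2)\) from \(10\) to \(10+t\) and \(d(3)\) from \(10\) to \(10+t\). Both of these already exceed \(8\), so their \(\min\{\cdot,8\}\) contributions are unchanged. The \(t\) new vertices \(w_i\) each have degree \(1\) and add \(t\) in total. Hence \(D_8(\calF_{15}(t))=41+t\), and \(\Delta_8=d_*\).

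The main obstacle is only bookkeeping: confirming that no eigenvalue other than \(t+7\) shifts. Lemma~\ref{lem:ridge-transfer} ensures this, since the new eigenvalue \(2\) lies below \(7-\theta_3\). The case \(t=0\) agrees with Proposition~\ref{prop:seed-defects}.
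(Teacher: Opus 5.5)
Your proposal is correct and follows the paper's proof essentially verbatim. You apply Lemma~\ref{lem:ridge-transfer} with $q=3$, $\eta=\{2,3\}$, $d_{\calF_{15}}(\eta)=5$ to replace the factor $(x-7)$ by $(x-t-7)(x-2)^t$, and then use $2<7-\theta_3<3$ from Lemma~\ref{lem:seed-roots} to identify the eight largest eigenvalues, giving $s_8=41+t+d_*$ against $D_8=41+t$ from the degree sequence $(5,5,10+t,10+t,2,7,6,1,\ldots,1)$, exactly as the paper does.
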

	
	\begin{proof}
		Equation \eqref{eq:F15-ridge-vector} satisfies
		Lemma~\ref{lem:ridge-transfer} with \(q=3\) and \(d_{\calF_{15}}(\eta)=5\), proving
		\eqref{eq:whiskered-charpoly}.  The degrees are
		\[
		(5,5,10+t,10+t,2,7,6,
		\underbrace{1,\ldots,1}_{t\text{ times}}),
		\]
		so \(D_8(\calF_{15}(t))=41+t\).  Since \(2<7-\theta_3<3\), the eight
		largest eigenvalues obtained from \eqref{eq:whiskered-charpoly} are
		\[
		t+7,\ 7-\theta_1,\ 6,\ 6,\ 7-\theta_2,\ 4,\ 4,\ 7-\theta_3.
		\]
		They sum to \(41+t+d_*\), proving
		\eqref{eq:whiskered-defect}.
	\end{proof}
	
These properties of the seed families provide all the necessary ingredients to establish Theorem~\ref{thm:counterexamples}.
	\begin{proof}[\textup{\textbf{Proof of Theorem~\ref{thm:counterexamples}}}]
		At \(r=5\), take \(\calF_{16}\).  By
		Proposition~\ref{prop:seed-defects},
		$\Delta_5(\calF_{16})=d_*>0.$
		Every entry of its displayed degree sequence is strictly less than
		\(|\calF_{16}|=16\), so no vertex belongs to every facet.  Hence
		\(\calF_{16}\) has empty total facet intersection.
		For \(t\ge0\), put
		\[
			V_t=\{0,1,\ldots,6,w_1,\ldots,w_t\},
			\qquad
			\mathcal G_t
			=\{V_t\setminus\sigma:\sigma\in\calF_{15}(t)\},
		\]
		where complementation is taken relative to \(V_t\). Thus
		\(\mathcal G_t\) is \((t+4)\)-uniform and has \(15+t\) facets.
		Every vertex of \(V_t\) occurs in some member of
		\(\calF_{15}(t)\), and hence is omitted from some member of
		\(\mathcal G_t\). Therefore \(\mathcal G_t\) has empty total facet
		intersection.
		At \(r=6\), take \(\mathcal G_0\). Lemma~\ref{lem:defect-transfer}
		and Proposition~\ref{prop:seed-defects} give
		\[
			\Delta_6(\mathcal G_0)
			=\Delta_9(\calF_{15})=d_*>0.
		\]
		For \(r\ge7\), put \(t=r-7\). Since
		\(|\calF_{15}(t)|=15+t\), Lemma~\ref{lem:defect-transfer} and
		\eqref{eq:whiskered-defect} yield
		\[
			\Delta_r(\mathcal G_t)
			=\Delta_{7+t}(\mathcal G_t)
			=\Delta_8(\calF_{15}(t))
			=d_*>0.
		\]
		This proves the assertion for every \(r\ge5\).
		It remains to consider the uniformity.  For \(q=3\), use
		\(\calF_{16}\).  For \(q\ge4\), put \(t=q-4\).  Then
		\(\mathcal G_t\) is \(q\)-uniform, and the preceding display gives
		a strict counterexample at index \(7+t=q+3\), again with empty total
		facet intersection.
	\end{proof}
	
	\section{The Second Partial-Sum Inequality and Its Equality Cases}
	\label{sec:core-inequality}
	We now shift our focus from the counterexamples to the universal majorization bound at the second index.
	For the index \(r=2\), recall the pendant and core sets \(P(F)\) and
	\(Q(F)\) from the Introduction.
	Then  \(V=P(F)\cup Q(F)\). Since \(\min\{d_F(v),2\}\) is \(1\) on \(P(F)\) and \(2\) on \(Q(F)\), we have
	\begin{equation}\label{eq:R2-pq}
		D_2(F)
		=
		\sum_{v\in V}\min\{d_F(v),2\}
		=
		|P(F)|+2|Q(F)|.
	\end{equation}
Let
	\[
		\widehat F=F\cup\binom{Q(F)}q.
	\]
	When \(|Q(F)|<q\), the completion adds no facets.
	
	\begin{lemma}\label{lem:completion}
		With the notation above,
		\[
		P(\widehat F)=P(F),
		\qquad
		Q(\widehat F)=Q(F),
		\qquad
		D_2(\widehat F)=D_2(F).
		\]
		Moreover, for every \(r\ge1\),
		\(s_r(F)\le s_r(\widehat F)\).
	\end{lemma}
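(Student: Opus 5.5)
The plan is to treat the combinatorial identities and the spectral monotonicity separately.

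\emph{Degree claims.} Every facet of \(\binom{Q(F)}q\) lies inside \(Q(F)\). Hence passing from \(F\) to \(\widehat F\) changes degrees only at vertices of \(Q(F)\), and those degrees can only increase. A vertex \(v\in P(F)\) is not in \(Q(F)\), so \(d_{\widehat F}(v)=d_F(v)=1\). A vertex \(v\in Q(F)\) satisfies \(d_{\widehat F}(v)\ge d_F(v)\ge2\). The vertex set is unchanged, since every facet of \(\widehat F\) lies in \(V\) and every vertex of \(V\) already occurs in \(F\). Therefore \(P(\widehat F)=P(F)\) and \(Q(\widehat F)=Q(F)\). Formula \eqref{eq:R2-pq}, applied to both families, gives \(D_2(\widehat F)=|P(F)|+2|Q(F)|=D_2(F)\). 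The degenerate case \(|Q(F)|<q\), where nothing is added, is trivial.

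\emph{Spectral claim.} I will use the facet Gram matrices. Index rows of \(B_F\) and \(B_{\widehat F}\) by all of \(\binom Vq\!{}_{-1}\), that is, by \(\binom V{q-1}\) padded with zero rows, so that both live in the same ambient space. Then \(B_F\) is the column submatrix of \(B_{\widehat F}\) indexed by \(F\subseteq\widehat F\). Consequently
\[
M_F=B_F^{\mathsf T}B_F
\]
is the principal submatrix of
\[
M_{\widehat F}=B_{\widehat F}^{\mathsf T}B_{\widehat F}
\]
on the index set \(F\). Since \(M_{\widehat F}\succeq0\), Lemma~\ref{lem:compression-equality} gives \(s_r(M_F)\le s_r(M_{\widehat F})\) for \(1\le r\le|F|\). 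Equivalently, one can note that \(L_{\widehat F}^+-L_F^+=\sum_{\sigma\in\widehat F\setminus F}\bd e_\sigma\bd e_\sigma^{\mathsf T}\succeq0\) and apply Lemma~\ref{lem:kyfan-monotone}.

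For \(r>|F|\), the padded convention applies. We have \(s_r(F)=s_{|F|}(F)=\tr M_F\), and \(s_r(\widehat F)\ge s_{|F|}(\widehat F)\) because all eigenvalues are nonnegative. This gives the inequality for every \(r\ge1\).

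There is no real obstacle here. The only points needing care are the padding of rows, so that the submatrix relation is literal, and the extension of Ky Fan sums beyond the matrix size, which holds by positivity.
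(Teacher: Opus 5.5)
Your proof is correct, and the degree part is essentially the paper's argument. The spectral part differs only in which side of the boundary matrix you work on.

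The paper stays with the up-Laplacian. It writes $\widetilde B_{\widehat F}=(\widetilde B_F\ C)$, so $L^+_{\widehat F}=L^+_F+CC^{\mathsf T}$ with $CC^{\mathsf T}\succeq0$. It then applies the monotonicity of Lemma~\ref{lem:kyfan-monotone} after padding both matrices with zeros to a common size at least $r$. This is exactly your ``equivalently'' remark.

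Your main route instead treats $M_F$ as the principal submatrix of $M_{\widehat F}$ indexed by $F$ and invokes the compression bound of Lemma~\ref{lem:compression-equality}. That lemma only covers $r\le|F|$, so you need a separate step for $r>|F|$. You supply it correctly: $s_r(F)=\tr M_F=s_{|F|}(F)\le s_{|F|}(\widehat F)\le s_r(\widehat F)$.

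The Laplacian route avoids this case split in $r$. Your version, however, uses the mechanism the paper needs later anyway. In the equality analysis of Theorem~\ref{thm:main-inequality}, the step $s_2(F)=s_2(\widehat F)$ is characterized through Lemma~\ref{lem:compression-equality}, namely by a maximizing subspace of $M_{\widehat F}$ supported on $F$. So your proof of the inequality already sits in the framework that yields the equality conditions.
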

	
	\begin{proof}
		Since \(\widehat F\setminus F\subseteq\binom{Q(F)}q\), no added facet contains a
		vertex outside \(Q(F)\). Hence
		\(d_{\widehat F}(v)=d_F(v)\) if \(v\notin Q(F)\), and
		\(d_{\widehat F}(v)\ge d_F(v)\ge2\) if \(v\in Q(F)\).
		It follows that \(P(\widehat F)=P(F)\) and
		\(Q(\widehat F)=Q(F)\).
		Hence, by \eqref{eq:R2-pq},
		\(D_2(\widehat F)=D_2(F)\).
		Let \(\widetilde B_F\) and \(\widetilde B_{\widehat F}\) be the boundary
		matrices with rows indexed by \(\binom V{q-1}\) and columns indexed by
		\(F\) and \(\widehat F\), respectively.  Thus \(\widetilde B_F\)
		is obtained from \(B_F\) by adjoining zero rows, so
		\(\widetilde B_F\widetilde B_F^{\mathsf T}\) has the same nonzero
		eigenvalues as \(L_F^+\). Let \(C\) be the
		matrix whose columns are the coordinate vectors of \(\bd e_{\sigma}\)
		for \(\sigma\in\widehat F\setminus F\). Then
		\(\widetilde B_{\widehat F}
		=\begin{pmatrix}\widetilde B_F&C\end{pmatrix}\). Hence
		\[
		\widetilde B_{\widehat F}\widetilde B_{\widehat F}^{\mathsf T}
		=
		\widetilde B_F\widetilde B_F^{\mathsf T}
		+
		CC^{\mathsf T}.
		\]
		Since \(CC^{\mathsf T}\) is positive
		semidefinite, fix \(r\ge1\)
		and, if necessary, adjoin the same number of zero rows and columns to
		both matrices so that their common size is at least \(r\).  This does
		not change either Ky Fan sum under the appended-zero convention, and
		the difference remains positive semidefinite.  Lemma~\ref{lem:kyfan-monotone}
		therefore gives \(s_r(F)\le s_r(\widehat F)\).  Since \(r\) was arbitrary,
		the inequality holds for every \(r\ge1\).
	\end{proof}
	
	We call the vertices in \(P(F)\) \emph{pendant vertices} and the vertices in
	\(Q(F)\) \emph{core vertices}. A facet containing exactly one pendant
	vertex is \emph{one-pendant}, and a facet containing at least two pendant
	vertices is \emph{heavy}.
	In any \(q\)-family, every facet is naturally either a core facet (contained entirely in \(Q(F)\)), a one-pendant facet \(\rho\cup\{z\}\) with \(\rho\in\binom{Q(F)}{q-1}\) and \(z\in P(F)\), or a heavy facet.
	
	\begin{lemma}\label{lem:star}
		Let \(\rho\) be a \((q-1)\)-set, let \(Z\) be a finite set disjoint
		from \(\rho\), and put
		\(\mathcal S(\rho,Z)=\{\rho\cup\{z\}:z\in Z\}\).
		If \(|Z|\ge1\), then the eigenvalues of
		\(M_{\mathcal S(\rho,Z)}\) are \(|Z|+q-1\), and \(q-1\) repeated
		\(|Z|-1\) times.
	\end{lemma}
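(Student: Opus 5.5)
The Gram matrix \(M_{\mathcal S(\rho,Z)}\) can be computed directly, so the plan is to show it equals \((q-1)I+\mathbf 1\mathbf 1^{\mathsf T}\) up to orthogonal similarity and then read off the spectrum.

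First, orient each facet \(\rho\cup\{z\}\) so that its incidence coefficient at \(\rho\) is \(+1\). Reorienting facets conjugates \(M\) by a diagonal \(\pm1\) matrix, which does not change the spectrum (as noted in Section~\ref{sec:notation}). Each diagonal entry is \(\norm{\bd e_\sigma}^2=q\).

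Next, compute the off-diagonal entries. Let \(z\ne z'\) in \(Z\), and take \(\sigma=\rho\cup\{z\}\) and \(\tau=\rho\cup\{z'\}\). Their only common ridge is \(\rho\), because any other ridge of \(\sigma\) contains \(z\notin\tau\). Hence
\[
\ip{\bd e_\sigma}{\bd e_\tau}=\varepsilon(\rho,\sigma)\varepsilon(\rho,\tau)=1.
\]
It follows that
\[
M_{\mathcal S(\rho,Z)}=(q-1)I_{|Z|}+\mathbf 1\mathbf 1^{\mathsf T}.
\]

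Finally, read off the eigenvalues. The matrix \(\mathbf 1\mathbf 1^{\mathsf T}\) has eigenvalue \(|Z|\) on \(\mathbf 1\) and eigenvalue \(0\) on \(\mathbf 1^\perp\), which has dimension \(|Z|-1\). Therefore \(M\) has eigenvalue \(|Z|+q-1\) once and \(q-1\) with multiplicity \(|Z|-1\).

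An alternative route is Lemma~\ref{lem:ridge-transfer} applied to a single facet \(\rho\cup\{z_1\}\) with \(t=|Z|-1\) whiskers. There \(M=(q)\), \(\mathbf y_\rho=(1)\), and the eigen-condition \(M\mathbf y_\rho=(1+q-1)\mathbf y_\rho\) holds trivially, giving the same spectrum. The only step needing care is the sign normalization, and the \(\pm1\) conjugation handles it.
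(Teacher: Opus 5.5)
Your proof is correct and matches the paper's: the paper rescales each basis vector \(e_{\sigma_z}\) by the sign \(\varepsilon(\rho,\sigma_z)\), which is exactly your reorientation, and then reads the spectrum of \((q-1)I_b+\one_b\one_b^{\mathsf T}\) off \(\one_b\) and \(\one_b^\perp\). Your alternative via Lemma~\ref{lem:ridge-transfer}, with a single facet \(\rho\cup\{z_1\}\) and \(t=|Z|-1\) fresh whiskers, is also valid, though it uses a heavier tool for what is a direct computation.
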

	
	\begin{proof}
		Write \(b=|Z|\) and \(\sigma_z=\rho\cup\{z\}\) for \(z\in Z\). The
		diagonal entries of \(M_{\mathcal S(\rho,Z)}\) are \(q\). For
		\(z\ne w\), the facets \(\sigma_z,\sigma_w\) have the unique common
		\((q-1)\)-face \(\rho\).
		Thus
		\((M_{\mathcal S(\rho,Z)})_{zw}
		=\varepsilon(\rho,\sigma_z)\varepsilon(\rho,\sigma_w)\).
		Rescaling each basis vector \(e_{\sigma_z}\) by the sign
		\(\varepsilon(\rho,\sigma_z)\) yields an orthogonally similar matrix
		\[
		(q-1)I_b+\one_b\one_b^{\mathsf T}.
		\]
		The all-ones vector \(\one_b\) is an eigenvector of this matrix with eigenvalue
		\(b+q-1\), and its orthogonal complement \(\one_b^\perp\) is the eigenspace
		for the eigenvalue \(q-1\). Hence the eigenvalues of
		\(M_{\mathcal S(\rho,Z)}\) are \(b+q-1\) with multiplicity one, and \(q-1\) with
		multiplicity \(b-1\).
	\end{proof}
To bound the eigenvalues of $M_F$, we first identify the explicit eigenvectors generated by edges with at least two pendants.
\begin{lemma}\label{lem:heavy-isolated}
	Let \(F\subseteq\binom Vq\), and let \(\sigma\in F\) contain at least two pendant vertices. Then \(e_\sigma\) is an eigenvector of \(M_F\) with eigenvalue \(q\).
\end{lemma}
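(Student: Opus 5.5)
The plan is to show directly that the column \(M_Fe_\sigma\) equals \(q\,e_\sigma\), i.e.\ every off-diagonal entry \((M_F)_{\tau,\sigma}=\ip{\bd e_\tau}{\bd e_\sigma}\) with \(\tau\ne\sigma\) vanishes, while the diagonal entry is \(\norm{\bd e_\sigma}^2=q\), since \(\bd e_\sigma\) has \(q\) coordinates equal to \(\pm1\).

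For the off-diagonal entries, recall that \(\ip{\bd e_\tau}{\bd e_\sigma}\) can be nonzero only if \(\tau\) and \(\sigma\) share a ridge, i.e.\ \(|\tau\cap\sigma|=q-1\). Let \(z_1\ne z_2\) be two pendant vertices in \(\sigma\). Suppose \(\tau\in F\), \(\tau\ne\sigma\), and \(|\tau\cap\sigma|=q-1\). Then \(\sigma\setminus\tau\) is a single vertex, so \(\tau\) contains at least one of \(z_1,z_2\); say \(z_1\in\tau\). But \(d_F(z_1)=1\) and \(z_1\in\sigma\), so \(\sigma\) is the unique facet containing \(z_1\), forcing \(\tau=\sigma\), a contradiction. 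Hence every column entry off the diagonal is zero, and \(M_Fe_\sigma=q e_\sigma\).

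There is no real obstacle; the only point needing care is the counting step that a facet sharing a ridge with \(\sigma\) misses exactly one vertex of \(\sigma\), hence cannot avoid both pendant vertices. This uses \(q\ge2\) only implicitly, through the existence of two distinct vertices in \(\sigma\).
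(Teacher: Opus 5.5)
Your proposal is correct and matches the paper's proof: both check that the diagonal entry is \(\norm{\bd e_\sigma}^2=q\), and that no other facet \(\tau\) can share a ridge with \(\sigma\) because \(\tau\) would have to contain one of the two pendant vertices of \(\sigma\). The paper states this directly as \(|\sigma\cap\tau|\le q-2\), whereas you argue by contradiction from \(|\sigma\setminus\tau|=1\); this is only a difference of phrasing.
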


\begin{proof}
	Let \(\tau\in F\setminus\{\sigma\}\). By definition, the pendant vertices in \(\sigma\) belong to no other facet in \(F\). Since \(\sigma\) contains at least two such vertices, it follows that \(|\sigma\cap\tau|\le q-2\). Thus \(\sigma\) and \(\tau\) share no \((q-1)\)-face, and hence \((M_F)_{\sigma,\tau}=\ip{\bd e_\sigma}{\bd e_\tau}=0\). Furthermore, \((M_F)_{\sigma,\sigma}=\norm{\bd e_\sigma}^2=q\). Therefore, \(M_Fe_\sigma=qe_\sigma\).
\end{proof}
	
\begin{lemma}\label{lem:leaf-differences}
Fix \(\rho\in\binom{Q(F)}{q-1}\), and let
\(\sigma_i=\rho\cup\{z_i\}\), \(1\leq i\leq b\), be all one-pendant
facets with core \(\rho\). Then
\[
\left\{
\sum_{i=1}^b
a_i\varepsilon(\rho,\sigma_i)e_{\sigma_i}:
\sum_{i=1}^b a_i=0
\right\}
\]
is contained in the \((q-1)\)-eigenspace of \(M_F\).
\end{lemma}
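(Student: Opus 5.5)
We verify directly that for $\mathbf x=\sum_i a_i\varepsilon(\rho,\sigma_i)e_{\sigma_i}$ with $\sum_i a_i=0$ we have $M_F\mathbf x=(q-1)\mathbf x$. The approach is to compute $\bd\mathbf x$ and then use $M_F=B_F^{\mathsf T}B_F$, so that $(M_F\mathbf x)_\tau=\ip{\bd e_\tau}{\bd\mathbf x}$ for every $\tau\in F$.

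First we compute $\bd\mathbf x$. Each $\bd e_{\sigma_i}$ equals $\varepsilon(\rho,\sigma_i)e_\rho$ plus ridges of the form $\gamma\cup\{z_i\}$, where $\gamma\in\binom{\rho}{q-2}$. The $e_\rho$-coefficient of $\bd\mathbf x$ is therefore $\sum_i a_i\varepsilon(\rho,\sigma_i)^2=\sum_i a_i=0$. The remaining terms give
\[
\bd\mathbf x=\sum_i a_i\varepsilon(\rho,\sigma_i)\sum_{\gamma}\varepsilon(\gamma\cup\{z_i\},\sigma_i)\,e_{\gamma\cup\{z_i\}}.
\]
These ridges are distinct for distinct $i$, since they contain different pendant vertices $z_i$.

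Next we evaluate $\ip{\bd e_\tau}{\bd\mathbf x}$ for each $\tau\in F$. A ridge $\gamma\cup\{z_i\}$ contains the pendant vertex $z_i$, whose unique facet is $\sigma_i$. Hence for $\tau\notin\{\sigma_1,\ldots,\sigma_b\}$ no such ridge lies in $\tau$, and $(M_F\mathbf x)_\tau=0=(q-1)x_\tau$.

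For $\tau=\sigma_j$, the ridges of $\sigma_j$ are $\rho$ and the $q-1$ ridges $\gamma\cup\{z_j\}$. The $\rho$-contribution vanishes because the $e_\rho$-coefficient of $\bd\mathbf x$ is $0$. Each of the other $q-1$ ridges contributes $a_j\varepsilon(\rho,\sigma_j)\varepsilon(\gamma\cup\{z_j\},\sigma_j)^2=a_j\varepsilon(\rho,\sigma_j)$. The total is $(q-1)a_j\varepsilon(\rho,\sigma_j)=(q-1)x_{\sigma_j}$, so $M_F\mathbf x=(q-1)\mathbf x$.

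This is a local computation with no real obstacle. The one point needing care is the claim that the ridges $\gamma\cup\{z_i\}$ lie only in $\sigma_i$; this uses $d_F(z_i)=1$ and the fact that $z_i\notin\rho$, since $\rho\subseteq Q(F)$. When $q=1$ the index set of $\gamma$ is empty, but the statement assumes $q\ge2$, so this case does not arise. The plan is also consistent with Lemma~\ref{lem:star}: that lemma gives eigenvalue $q-1$ on $\one^\perp$ for the isolated star, and here all coupling to the rest of $F$ passes through $\rho$, whose coefficient in $\bd\mathbf x$ is zero.
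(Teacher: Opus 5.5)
Your proof is correct and is essentially the paper's argument. Both verify $M_F\mathbf x=(q-1)\mathbf x$ directly, using two facts: each $z_i$ lies only in $\sigma_i$, so $\rho$ is the only ridge through which the star couples to the rest of $F$; and $\sum_i a_i=0$ cancels that coupling. The difference is only bookkeeping: you pass through $\bd\mathbf x$ and note that its $e_\rho$-coefficient vanishes, whereas the paper reads off the Gram entries, namely the block $(q-1)I_b+\one_b\one_b^{\mathsf T}$ on the star and the entries $\varepsilon(\rho,\tau)\varepsilon(\rho,\sigma_i)$ outside it.
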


\begin{proof}
The assertion is trivial if \(b=0\), so assume \(b\geq1\). Write
\(\varepsilon_i=\varepsilon(\rho,\sigma_i)\), and let
$\mathbf u=\sum_{i=1}^b a_i\varepsilon_i e_{\sigma_i},$
where $\sum_{i=1}^b a_i=0.$
In the rescaled basis \(\varepsilon_i e_{\sigma_i}\), the principal
submatrix indexed by \(\sigma_1,\ldots,\sigma_b\) is
\((q-1)I_b+\mathbf 1_b\mathbf 1_b^{\mathsf T}\): its diagonal entries
are \(q\), and its off-diagonal entries are \(1\), since two distinct
\(\sigma_i\) share exactly the ridge \(\rho\). It therefore sends the
coefficient vector \((a_1,\ldots,a_b)^{\mathsf T}\) to
\((q-1)(a_1,\ldots,a_b)^{\mathsf T}\).

It remains to check the coordinates outside
\(\{\sigma_1,\ldots,\sigma_b\}\). Let
\(\tau\in F\setminus\{\sigma_1,\ldots,\sigma_b\}\). Any ridge shared by
\(\tau\) and \(\sigma_i\) cannot contain \(z_i\), because \(z_i\) occurs
only in \(\sigma_i\); hence the shared ridge, if one exists, must be
\(\rho\). Thus either all relevant Gram entries vanish, or
\[
(M_F\mathbf u)_\tau
=
\sum_{i=1}^b
a_i\varepsilon_i
\varepsilon(\rho,\tau)\varepsilon_i
=
\varepsilon(\rho,\tau)\sum_{i=1}^b a_i
=0.
\]
Hence \(M_F\mathbf u\) has no coordinates outside
\(\{\sigma_1,\ldots,\sigma_b\}\), while its coordinates inside this set
are those of \((q-1)\mathbf u\). Therefore
\(M_F\mathbf u=(q-1)\mathbf u\), as required.
\end{proof}
	
Let
\[
Q=Q(F),\qquad P=P(F),\qquad m=|Q|,\qquad p=|P|.
\]
Define
\[
\calF_Q=\binom Qq, \qquad  \calR_Q=\binom Q{q-1}.
\]
Recall the
signed incidence map \(\bd_Q:\R^{\calF_Q}\to\R^{\calR_Q}\) from the
Introduction, given explicitly by
\[
\bd_Q e_\sigma
=
\sum_{j=1}^q(-1)^{j-1}e_{\sigma\setminus\{v_j\}},
\qquad
\sigma=\{v_1<\cdots<v_q\}.
\]
Furthermore, put
\[
\scrH_Q=\im(\bd_Q^{\mathsf T}),
\qquad
\mathbf y_\rho=\bd_Q^{\mathsf T}e_\rho
\quad(\rho\in\calR_Q).
\]
The complete-core matrix is then
\(M_{\calF_Q}=\bd_Q^{\mathsf T}\bd_Q\).	
Next, we evaluate the boundary operator and the specific support structures within a complete simplex.
\begin{lemma}\label{lem:complete-simplex}
	Let \(m=|Q|\ge q\). Then
	\[ \dim(\im\bd_Q^{\mathsf T})=\binom{m-1}{q-1}, \qquad
	\bd_Q^{\mathsf T}\bd_Q=
	\begin{cases}
		mI,&\text{on }\im\bd_Q^{\mathsf T},\\
		0,&\text{on }\Ker\bd_Q.
	\end{cases}
	\]
	For \(\rho\in\calR_Q\), the vector \(\mathbf y_\rho\) satisfies
	\begin{align*}
		\supp(\mathbf y_\rho)&=\{\rho\cup\{w\}:w\in Q\setminus\rho\},\\
		\norm{\mathbf y_\rho}^2&=m-q+1
	\end{align*}
	and, for distinct \(\rho,\gamma\in\calR_Q\),
	\(\ip{\mathbf y_\rho}{\mathbf y_\gamma}=0\) if and only if
	\(|\rho\cap\gamma|\le q-3\).
\end{lemma}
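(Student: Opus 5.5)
The plan is to work entirely with the coboundary on the complete simplex \(\calF_Q=\binom Qq\), using the Hodge-type identity \(\bd_Q\bd_Q^{\mathsf T}+\delta\delta^{\mathsf T}=mI\) on \(\R^{\calR_Q}\). Here \(\delta:\R^{\binom Q{q-2}}\to\R^{\calR_Q}\) denotes the coboundary one dimension lower, that is, the transpose of the boundary from \(\calR_Q\) to \(\binom Q{q-2}\). This identity is the combinatorial Laplacian of the full simplex.

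\textbf{Step 1: the identity and the dimension count.} First I verify directly that \(\bd_Q\bd_Q^{\mathsf T}+\bd'^{\mathsf T}\bd'=mI\), where \(\bd'\) is the boundary \(\R^{\calR_Q}\to\R^{\binom Q{q-2}}\), by computing entries.
\begin{itemize}[leftmargin=*]
\item On the diagonal at \(\rho\), the first term contributes the number of \(w\in Q\setminus\rho\), namely \(m-q+1\), and the second contributes \(|\rho|=q-1\). The total is \(m\).
\item Off the diagonal, for \(|\rho\cap\gamma|=q-2\), the two contributions are \(\varepsilon(\rho,\rho\cup\gamma)\varepsilon(\gamma,\rho\cup\gamma)\) and \(\varepsilon(\rho\cap\gamma,\rho)\varepsilon(\rho\cap\gamma,\gamma)\). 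These cancel by the same sign bookkeeping as \(\bd^2=0\).
\item Otherwise both terms vanish.
\end{itemize}
Multiplying \(\bd_Q^{\mathsf T}\) by this identity on the left and using \(\bd_Q^{\mathsf T}\bd'^{\mathsf T}=(\bd'\bd_Q)^{\mathsf T}=0\) gives \(\bd_Q^{\mathsf T}\bd_Q\bd_Q^{\mathsf T}=m\bd_Q^{\mathsf T}\). Hence \(\bd_Q^{\mathsf T}\bd_Q\) acts as \(mI\) on \(\im\bd_Q^{\mathsf T}\) and as \(0\) on \(\Ker\bd_Q=(\im\bd_Q^{\mathsf T})^\perp\). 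For the dimension, I would use \(\tr(\bd_Q^{\mathsf T}\bd_Q)=q\binom mq\), which gives \(m\cdot\dim=q\binom mq\), so \(\dim=\frac qm\binom mq=\binom{m-1}{q-1}\). This needs \(m\ge q\) so that the count is meaningful; for \(m>0\) it holds.

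\textbf{Step 2: the vectors \(\mathbf y_\rho\).} By definition \(\mathbf y_\rho=\bd_Q^{\mathsf T}e_\rho\) has entries \(\varepsilon(\rho,\sigma)\). These are nonzero (\(\pm1\)) exactly when \(\sigma=\rho\cup\{w\}\) with \(w\in Q\setminus\rho\), which gives the support and the squared norm \(m-q+1\). For the inner product, \(\ip{\mathbf y_\rho}{\mathbf y_\gamma}=(\bd_Q\bd_Q^{\mathsf T})_{\rho\gamma}\).
\begin{itemize}[leftmargin=*]
\item If \(|\rho\cap\gamma|=q-2\), the only common coface is \(\rho\cup\gamma\), so the inner product is \(\pm1\neq0\).
\item If \(|\rho\cap\gamma|\le q-3\), there is no common coface (a common coface would have at least \(|\rho\cup\gamma|\ge q+1\) elements), so the supports are disjoint and the inner product is \(0\).
\end{itemize}
This gives the stated equivalence. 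The off-diagonal computation in Step 1 is therefore only needed for the identity itself.

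\textbf{Main obstacle.} The only delicate point is the sign cancellation in the off-diagonal entries of the Step 1 identity. I would handle it by writing \(\rho=\tau\cup\{a\}\) and \(\gamma=\tau\cup\{b\}\), and expressing each \(\varepsilon\) through positions. Alternatively, I would use the standard fact \(\varepsilon(\rho,\rho\cup\{b\})\varepsilon(\tau,\rho)=-\varepsilon(\gamma,\gamma\cup\{a\})\varepsilon(\tau,\gamma)\), which is precisely the \(\bd^2=0\) cancellation already recorded in Section~\ref{sec:notation}.
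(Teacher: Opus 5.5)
Your proof is correct. For the identity \(\bd_Q\bd_Q^{\mathsf T}+(\bd_Q^-)^{\mathsf T}\bd_Q^-=mI\), the resulting action of \(\bd_Q^{\mathsf T}\bd_Q\) on \(\im\bd_Q^{\mathsf T}\oplus\Ker\bd_Q\), and the support, norm and orthogonality claims for \(\mathbf y_\rho\), it follows the paper essentially step by step. This includes getting the off-diagonal cancellation from the coefficient of \(e_{\rho\cap\gamma}\) in \(\bd^2e_{\rho\cup\gamma}=0\) and then multiplying by a sign.

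The one genuine difference is the dimension count. The paper proves \(\rank\bd_Q=\binom{m-1}{q-1}\) independently of the identity, by a two-sided bound. The star of a fixed vertex \(v\) gives a signed-identity submatrix, so \(\rank\bd_Q\ge\binom{m-1}{q-1}\). The same device for the lower boundary, together with \(\bd_Q^-\bd_Q=0\), gives \(\rank\bd_Q\le\binom m{q-1}-\binom{m-1}{q-2}\).

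You instead note that, once the identity is known, \(m^{-1}\bd_Q^{\mathsf T}\bd_Q\) is the orthogonal projection onto \(\im\bd_Q^{\mathsf T}\), so its rank equals its trace. Then \(\tr(\bd_Q^{\mathsf T}\bd_Q)=\sum_\sigma\norm{\bd_Qe_\sigma}^2=q\binom mq\) gives \(\dim\im\bd_Q^{\mathsf T}=\frac qm\binom mq=\binom{m-1}{q-1}\).

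Your route is shorter and needs no separate rank estimates. The paper's route does not depend on the spectral identity, and it introduces the signed-identity star submatrix that reappears in Lemma~\ref{lem:support-rank}. There it shows that restricting \(\scrH_Q\) to the facets through \(v\) is an isomorphism.
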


\begin{proof}
	We first establish the rank. Put \(h=\binom{m-1}{q-1}\), and fix \(v\in Q\). Among the columns of
	\(\bd_Q\) indexed by \(\sigma=\rho\cup\{v\}\),
	\(\rho\in\binom{Q\setminus\{v\}}{q-1}\), and the rows indexed by those
	same \(\rho\)'s, the resulting submatrix is a signed identity matrix.
	Hence
	\[
	\rank\bd_Q\ge\binom{m-1}{q-1}=h.
	\]
	Let \(\bd_Q^-:\R^{\calR_Q}\to\R^{\binom Q{q-2}}\) be the lower
	boundary. The analogous submatrix of \(\bd_Q^-\), using columns
	\(\eta\cup\{v\}\) and rows
	\(\eta\in\binom{Q\setminus\{v\}}{q-2}\), is also a signed identity.
	Thus \(\rank\bd_Q^-\ge\binom{m-1}{q-2}\). Since
	\(\bd_Q^-\bd_Q=0\),
	\[
	\rank\bd_Q
	\le\dim\Ker\bd_Q^-
	\le\binom m{q-1}-\binom{m-1}{q-2}
	=\binom{m-1}{q-1}=h.
	\]
	Consequently, \(\dim(\im\bd_Q^{\mathsf T}) = \rank\bd_Q = h\).
	
	We next prove the complete-simplex identity with signs. If
	\(\rho,\gamma\in\binom Q{q-1}\) are distinct, both off-diagonal entries
	\((\bd_Q\bd_Q^{\mathsf T})_{\rho,\gamma}\) and
	\(((\bd_Q^-)^{\mathsf T}\bd_Q^-)_{\rho,\gamma}\) vanish unless
	\(|\rho\cap\gamma|=q-2\). When \(|\rho\cap\gamma|=q-2\), let
	\(\eta=\rho\cap\gamma\) and \(\sigma=\rho\cup\gamma\). The coefficient of
	\(e_\eta\) in \(\bd^2e_\sigma=0\) is
	\[
	\varepsilon(\rho,\sigma)\varepsilon(\eta,\rho)
	+\varepsilon(\gamma,\sigma)\varepsilon(\eta,\gamma)=0.
	\]
	Multiplying this identity by the sign
	\(\varepsilon(\gamma,\sigma)\varepsilon(\eta,\rho)\) and using the fact that
	every incidence coefficient squares to one yields
	\[
	\varepsilon(\rho,\sigma)\varepsilon(\gamma,\sigma)
	+\varepsilon(\eta,\rho)\varepsilon(\eta,\gamma)=0.
	\]
	The two terms in the last display are precisely the off-diagonal entries
	of \(\bd_Q\bd_Q^{\mathsf T}\) and
	\((\bd_Q^-)^{\mathsf T}\bd_Q^-\), so those entries cancel.
	On the diagonal indexed by \(\rho\), the two contributions are
	\(m-q+1\), the number of \(q\)-sets containing \(\rho\), and \(q-1\),
	the number of codimension-one faces of \(\rho\). Therefore
	\[
	\bd_Q\bd_Q^{\mathsf T}+(\bd_Q^-)^{\mathsf T}\bd_Q^-=mI.
	\]
	The orthogonal decomposition
	$\R^{\calF_Q}=\im\bd_Q^{\mathsf T}\oplus\Ker\bd_Q$
	follows from \((\im\bd_Q^{\mathsf T})^\perp=\Ker\bd_Q\). The
	operator \(\bd_Q^{\mathsf T}\bd_Q\) is zero on \(\Ker\bd_Q\).
	If \(\mathbf x=\bd_Q^{\mathsf T}\mathbf z\), then the identity above and
	\(\bd_Q^-\bd_Q=0\) give
	\begin{align*}
		\bd_Q^{\mathsf T}\bd_Q\mathbf x=\bd_Q^{\mathsf T}\bd_Q\bd_Q^{\mathsf T}\mathbf z=\bd_Q^{\mathsf T}
		\bigl(mI-(\bd_Q^-)^{\mathsf T}\bd_Q^-\bigr)\mathbf z
		=m\mathbf x.
	\end{align*}
	This proves the asserted action.
	Finally, \(\mathbf y_\rho=\bd_Q^{\mathsf T}e_\rho\) has a coordinate of
	\(\pm1\) for every \(q\)-set containing \(\rho\), and is zero otherwise. Hence
	\[
	\supp(\mathbf y_\rho)=\{\rho\cup\{w\}:w\in Q\setminus\rho\},
	\qquad \norm{\mathbf y_\rho}^2=m-q+1.
	\]
	For distinct \(\rho,\gamma\), their inner product has a non-zero contribution precisely
	when a \(q\)-set contains both, which occurs precisely when
	\(|\rho\cap\gamma|=q-2\); the unique contribution is then \(\pm1\). Thus
	\(\ip{\mathbf y_\rho}{\mathbf y_\gamma}=0\) exactly when
	\(|\rho\cap\gamma|\le q-3\).
\end{proof}
	
Let \(\calC\) be the set of ridges that are contained in at least one
one-pendant facet:
\[
\calC=\{\rho\in\calR_Q:\rho\cup\{z\}\in F\text{ for some }z\in P\}.
\]
For each \(\rho\in\calC\), let \(b_\rho=|\{z\in P:\rho\cup\{z\}\in F\}|\) denote the
number of pendant vertices associated with \(\rho\), and put
\(p_1=\sum_{\rho\in\calC}b_\rho\). Since every pendant vertex in \(P\) is
contained in a unique facet, it follows that \(p_1\le p\). Define the linear
operator
$$
Y:\R^{\calC}\to\scrH_Q, \qquad Ye_\rho=\sqrt{b_\rho}\mathbf y_\rho\  (\rho\in\calC).
$$
We define the matrix \(T_F\) as the block operator acting on \(\scrH_Q\oplus\R^{\calC}\) given by
\begin{equation*}
	T_F=
	\begin{pmatrix}
		0 & Y \\
		Y^{\mathsf T} & \diag(b_\rho:\rho\in\calC)-(m-q+1)I
	\end{pmatrix}.
\end{equation*}

\begin{lemma}\label{lem:core-excess} Assume $\binom{Q}q\subseteq F$ and $m\ge q$. Every eigenvalue of $M_F$ strictly exceeding $m$ is $m+\mu$ for a positive eigenvalue $\mu$ of $T_F$, with the same multiplicity. Hence $$ \sum_{\lambda\in\Spec(M_F)}(\lambda-m)_+ =\tr((T_F)_+)\le p_1\le p, $$ and, for every $r\ge1$, $$ s_r(F) \le rm+\tr((T_F)_+) \le rm+p_1 \le rm+p. $$ \end{lemma}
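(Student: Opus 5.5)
The plan is to split \(\R^F\) into mutually orthogonal \(M_F\)-invariant subspaces, identify every eigenvalue that can exceed \(m\) with a block of the form \(mI+T_F\), and then apply Lemmas~\ref{lem:positive-excess-rigidity} and~\ref{lem:excess-controls}. Since \(\binom Qq\subseteq F\), every facet is of exactly one of three kinds:
\begin{itemize}[leftmargin=*]
\item a core facet in \(\calF_Q\);
\item a one-pendant facet \(\rho\cup\{z\}\) with \(\rho\in\calC\) and \(z\in P\);
\item a heavy facet.
\end{itemize}
For each \(\rho\in\calC\), write \(\sigma^\rho_1,\ldots,\sigma^\rho_{b_\rho}\) for its one-pendant facets, and put
\[
\mathbf w_\rho=b_\rho^{-1/2}\sum_i\varepsilon(\rho,\sigma^\rho_i)e_{\sigma^\rho_i}.
\]
I would decompose
\[
\R^F=\Span\{e_\sigma:\sigma\text{ heavy}\}\oplus\bigoplus_{\rho\in\calC}\mathscr L_\rho\oplus\Ker\bd_Q\oplus\bigl(\scrH_Q\oplus\Span\{\mathbf w_\rho:\rho\in\calC\}\bigr),
\]
where \(\mathscr L_\rho\) is the zero-sum space of Lemma~\ref{lem:leaf-differences}, and \(\Ker\bd_Q\) and \(\scrH_Q\) are viewed inside the core coordinates \(\R^{\calF_Q}\). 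These summands are pairwise orthogonal and together span \(\R^F\).

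\emph{Step 1 (the small pieces).} By Lemma~\ref{lem:heavy-isolated}, each heavy coordinate vector is an eigenvector with eigenvalue \(q\le m\). By Lemma~\ref{lem:leaf-differences}, each \(\mathscr L_\rho\) lies in the \((q-1)\)-eigenspace. Both of these pieces are therefore invariant and contribute no eigenvalue above \(m\).

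\emph{Step 2 (the Gram entries).} I compute \(M_F\) on the last summand via \(\ip{\bd\mathbf x}{\bd\mathbf x'}\).
\begin{itemize}[leftmargin=*]
\item In \(\bd\mathbf w_\rho\), the coefficient of \(e_\rho\) is \(\sqrt{b_\rho}\). Every other ridge appearing in \(\bd\mathbf w_\rho\) contains a pendant vertex, so it is orthogonal to all core ridges.
\item Hence, for a core facet \(\sigma\), \(\ip{\bd e_\sigma}{\bd\mathbf w_\rho}=\sqrt{b_\rho}\,\varepsilon(\rho,\sigma)\). In other words, the cross block of \(M_F\) from \(\Span\{\mathbf w_\rho\}\) into the core coordinates is \(Y\), with columns \(\sqrt{b_\rho}\,\mathbf y_\rho\in\scrH_Q\).
\item For \(\rho\ne\gamma\), the facets underlying \(\mathbf w_\rho\) and \(\mathbf w_\gamma\) share no ridge, because any shared ridge would contain a pendant vertex lying in only one facet. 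Thus \(\ip{\mathbf w_\rho}{M_F\mathbf w_\gamma}=0\).
\item On the diagonal, the rescaled star block \((q-1)I+\one\one^{\mathsf T}\) from Lemma~\ref{lem:star} gives \(\ip{\mathbf w_\rho}{M_F\mathbf w_\rho}=b_\rho+q-1\).
\item Each \(\mathbf w_\rho\) is orthogonal to \(\mathscr L_\rho\), and \(\mathscr L_\rho\) is invariant, so \(M_F\mathbf w_\rho\) has no component in any \(\mathscr L_\gamma\) or heavy coordinate.
\end{itemize}

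\emph{Step 3 (identifying \(mI+T_F\)).}
\begin{itemize}[leftmargin=*]
\item The vectors \(Y e_\rho\) lie in \(\scrH_Q\), which is orthogonal to \(\Ker\bd_Q\). By Lemma~\ref{lem:complete-simplex}, \(\bd_Q^{\mathsf T}\bd_Q\) vanishes on \(\Ker\bd_Q\). Hence \(\Ker\bd_Q\) is invariant with eigenvalue \(0\).
\item On \(\scrH_Q\oplus\R^{\calC}\), again by Lemma~\ref{lem:complete-simplex}, \(M_F\) acts as
\[
\begin{pmatrix}mI&Y\\Y^{\mathsf T}&\diag(b_\rho)+(q-1)I\end{pmatrix}=mI+T_F .
\]
\end{itemize}
Collecting all the summands, the eigenvalues of \(M_F\) exceeding \(m\) are exactly \(m+\mu\) for positive eigenvalues \(\mu\) of \(T_F\), with multiplicities preserved by the orthogonal decomposition. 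This yields the identity
\[
\sum_{\lambda\in\Spec(M_F)}(\lambda-m)_+=\tr((T_F)_+).
\]

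\emph{Step 4 (the bounds).} If \(\calC=\varnothing\), then \(T_F=0\) on \(\scrH_Q\) and \(p_1=0\), so the bound is trivial. Otherwise, Lemma~\ref{lem:positive-excess-rigidity} applies with \(a=m-q+1\), \(\mathbf z_\rho=\mathbf y_\rho\) and \(b_\rho>0\); the hypothesis \(\norm{\mathbf y_\rho}^2=a\) is supplied by Lemma~\ref{lem:complete-simplex}. It gives \(\tr((T_F)_+)\le p_1\le p\). Finally, Lemma~\ref{lem:excess-controls} with \(a=m\), applied to \(M_F\), yields
\[
s_r(F)\le rm+\tr((T_F)_+)\le rm+p_1\le rm+p .
\]

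I expect the main care to lie in Step 2. The delicate points are the sign bookkeeping for the cross block, checking that it is exactly \(Y\) in the normalization \(\mathbf w_\rho\), and the invariance of the last summand. In particular, Step 2 must confirm that \(M_F\mathbf w_\rho\) has no component along \(\Ker\bd_Q\), along other \(\mathbf w_\gamma\), or along the leaf-difference spaces.
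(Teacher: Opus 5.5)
Your proposal is correct and follows essentially the same route as the paper's proof: the same orthogonal splitting of $\R^F$ into heavy coordinates (eigenvalue $q$), the leaf-difference spaces (eigenvalue $q-1$), $\Ker\bd_Q$ (eigenvalue $0$), and $\scrH_Q\oplus\Span\{\mathbf w_\rho:\rho\in\calC\}$, together with the same Gram-entry computation identifying the last block as $mI+T_F$; your $\mathbf w_\rho$ are the paper's $\boldsymbol{\ell}_\rho$. The bounds then follow, exactly as in the paper, from Lemma~\ref{lem:positive-excess-rigidity} with $a=m-q+1$ and Lemma~\ref{lem:excess-controls} with $a=m$.
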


\begin{proof}
For each \(\rho\in\calC\), define
\[
\boldsymbol{\ell}_\rho
=
b_\rho^{-1/2}
\sum_{\substack{z\in P\\ \rho\cup\{z\}\in F}}
\varepsilon(\rho,\rho\cup\{z\})e_{\rho\cup\{z\}}.
\]
Each \(\boldsymbol{\ell}_\rho\) has norm one. Since every one-pendant
facet has a unique core, these vectors have disjoint supports for
distinct \(\rho\), and hence are orthonormal.
Every facet is either a core facet, a one-pendant facet, or a heavy
facet. The assumption \(\binom Qq\subseteq F\) implies that the core
facets are exactly \(\calF_Q=\binom Qq\). Let \(g\) be the number of
heavy facets. Lemma~\ref{lem:heavy-isolated} gives the reducing block
\(qI_g\). For each \(\rho\in\calC\),
Lemma~\ref{lem:leaf-differences} gives a
\((b_\rho-1)\)-dimensional eigenspace with eigenvalue \(q-1\).
Since \(M_F\) is symmetric, these eigenspaces are reducing and together
give the block \((q-1)I_{p_1-|\calC|}\). Their orthogonal complement,
together with that of the heavy-facet space, is
\[
\mathbb R^{\calF_Q}
\oplus
\Span\{\boldsymbol{\ell}_\rho:\rho\in\calC\}.
\]

If \(\rho\neq\gamma\), a one-pendant facet with core \(\rho\) and one
with core \(\gamma\) intersect in \(\rho\cap\gamma\), which has size at
most \(q-2\). They therefore share no ridge, so
\(\langle M_F\boldsymbol{\ell}_\rho,
\boldsymbol{\ell}_\gamma\rangle=0\).
Lemma~\ref{lem:star} also gives
\(\langle M_F\boldsymbol{\ell}_\rho,
\boldsymbol{\ell}_\rho\rangle=b_\rho+q-1\).
For \(\sigma\in\calF_Q\), direct calculation gives
\[
\langle M_Fe_\sigma,\boldsymbol{\ell}_\rho\rangle
=
b_\rho^{-1/2}
\sum_{\substack{z\in P\\ \rho\cup\{z\}\in F}}
\varepsilon(\rho,\rho\cup\{z\})
\langle\partial e_\sigma,\partial e_{\rho\cup\{z\}}\rangle
=
\sqrt{b_\rho}\,\varepsilon(\rho,\sigma).
\]
Thus the coupling between the core space and the leaf-average space is
\(Y\), and the latter has diagonal block
\(\diag(b_\rho+q-1:\rho\in\calC)\).
By Lemma~\ref{lem:complete-simplex},
\(\mathbb R^{\calF_Q}=\scrH_Q\oplus\Ker\partial_Q\),
with \(M_{\calF_Q}=mI\) on \(\scrH_Q\) and \(M_{\calF_Q}=0\) on
\(\Ker\partial_Q\). If \(\mathbf x\in\Ker\partial_Q\), then
\(\mathbf x\perp\scrH_Q\); since \(\im Y\subseteq\scrH_Q\), this gives
\(Y^{\mathsf T}\mathbf x=0\). Hence, using the definition of \(T_F\),
we obtain the orthogonal decomposition
\[
M_F\simeq
qI_g
\oplus(q-1)I_{p_1-|\calC|}
\oplus0_{\binom{m-1}{q}}
\oplus
\left(
mI_{\binom{m-1}{q-1}+|\calC|}+T_F
\right).
\]
Because \(m\geq q\), every eigenvalue of the first three blocks is at
most \(m\). The eigenvalues of the last block are \(m+\mu\), where
\(\mu\) runs through the eigenvalues of \(T_F\), with the same
multiplicities. Therefore
\[
\sum_{\lambda\in\Spec(M_F)}(\lambda-m)_+
=
\tr((T_F)_+).
\]
If \(\calC=\varnothing\), then \(T_F=0\). Otherwise,
Lemma \ref{lem:complete-simplex} permits us to apply
Lemma~\ref{lem:positive-excess-rigidity} with
\(a=m-q+1\), \(\mathbf z_\rho=\mathbf y_\rho\),
\(b_i=b_\rho\), and \(X=Y\). In either case,
\[
\tr((T_F)_+)
\leq\sum_{\rho\in\calC}b_\rho
=p_1
\leq p.
\]
Finally, Lemma~\ref{lem:excess-controls}, applied with \(A=M_F\) and
\(a=m\), gives
\[
s_r(F)
\leq rm+\sum_{\lambda\in\Spec(M_F)}(\lambda-m)_+
=rm+\tr((T_F)_+)
\leq rm+p_1
\leq rm+p,
\]
as required.
\end{proof}
Before proceeding to the main analysis, we record a global upper bound that
naturally applies when the condition $m < q$ is met.
\begin{lemma}\label{lem:low-core-bounds}
	 If $m<q,$ then  $s_r(F)\le D_r(F)\le p+rm$ for every \(r\ge1\).
\end{lemma}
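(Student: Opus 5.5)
We need to show that when \(m<q\), \(s_r(F)\le D_r(F)\le p+rm\). The plan is to prove the two inequalities separately, treating the second as an elementary counting statement and deriving the first from the structure forced by a small core.

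The second inequality comes from \(D_r(F)=\sum_v\min\{d_F(v),r\}\). A pendant vertex contributes \(\min\{1,r\}=1\), and a core vertex contributes at most \(r\). Summing gives \(D_r(F)\le p+rm\).

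For the first inequality, the key observation is that \(|Q|=m<q\), so every facet contains at least \(q-m\ge1\) pendant vertices.

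\textbf{Case \(m\le q-2\).} Every facet contains at least two pendant vertices, so it is heavy. By Lemma~\ref{lem:heavy-isolated}, \(M_F=qI_{|F|}\). Hence \(s_r(F)=q\min\{r,|F|\}\). On the degree side, each facet has \(q-m\) pendant vertices and these are distinct across facets, so \(p=(q-m)|F|\). We must show \(q\min\{r,|F|\}\le D_r(F)\).
\begin{itemize}
\item If \(r\ge|F|\), then \(D_r(F)=q|F|\) because \(r\) is at least every degree, and equality holds.
\item If \(r<|F|\), then \(D_r(F)\ge p\ge 2|F|\) and we need \(qr\) on the right. More directly, \(D_r(F)=p+\sum_{v\in Q}\min\{d_F(v),r\}\). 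A core vertex has degree \(\ge2\), but its degree could be less than \(r\), so it is cleaner to bound the difference termwise: \(D_r(F)-qr\ge(q-m)|F|-(q-m)r\ge0\) provided the core contributions are at least \(mr\). That lower bound on the core is not guaranteed.
\end{itemize}
A better route is concavity of \(x\mapsto\min\{x,r\}\). Each facet \(\sigma\) could be assigned the \(q\) unit contributions of its vertices, giving \(D_r(F)\ge\sum_\sigma\ldots\). However, the per-facet count \(\sum_{v\in\sigma}\min\{d(v),r\}/d(v)\ge q\cdot\min\{1,r/|F|\}\) is not an obviously valid bound either. The clean bound is this: each core vertex satisfies \(\min\{d,r\}\ge\min\{d,r\}\), and trivially \(D_r(F)\ge p=(q-m)|F|\ge(q-m)(r+1)\). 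Separately \(D_r(F)\ge\sum_{v\in Q}\min\{d_F(v),r\}+p\). I expect to combine these as \(\min\{d_F(v),r\}\ge d_F(v)\,r/|F|\) for \(v\in Q\), since \(d_F(v)\le|F|\). This gives \(D_r(F)\ge(q-m)|F|+m|F|\cdot r/|F|\ge(q-m)r+mr=qr\).

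\textbf{Case \(m=q-1\).} Every facet is either heavy or one-pendant, and the core of a one-pendant facet is a \((q-1)\)-subset of \(Q\), which must be \(Q\) itself. So \(F=\mathcal S(Q,Z)\cup H\), where \(H\) consists of heavy facets. Lemma~\ref{lem:heavy-isolated} splits off \(qI_{|H|}\), and Lemma~\ref{lem:star} gives the spectrum \(|Z|+q-1\) together with \(q-1\) repeated \(|Z|-1\) times. We then compare \(s_r(F)\) with \(D_r(F)\) on the largest \(r\) eigenvalues. The top eigenvalue \(|Z|+q-1\) should be charged to the \(|Z|\) pendant vertices of the star plus one unit from each of the \(q-1\) core vertices. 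Each remaining eigenvalue, \(q\) or \(q-1\), is charged to one additional unit per core vertex (available while \(d_F(v)\ge j\)) plus the pendant vertices of the corresponding heavy facet. The same inequality \(\min\{d,r\}\ge dr/f\) argument, applied with eigenvalue multiplicities, closes this case.

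\textbf{Main obstacle.} The bookkeeping in the case \(m=q-1\) is where I expect the difficulty. There a core vertex may have degree less than \(r\), and this must be offset by the \(|Z|\) pendants absorbed by the single large eigenvalue. I would handle it by ordering the eigenvalues and using the concavity bound vertexwise.
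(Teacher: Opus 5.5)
\paragraph{Verdict.} Your proof of \(D_r(F)\le p+rm\) and the spectra you read off from Lemmas~\ref{lem:heavy-isolated} and~\ref{lem:star} are correct. However, the inequality \(s_r(F)\le D_r(F)\) is not established: the case \(m\le q-2\) contains a false claim, and the case \(m=q-1\) has a genuine gap.

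\paragraph{The case \(m\le q-2\).} You assert that every facet has exactly \(q-m\) pendant vertices, so \(p=(q-m)|F|\). This is false, because a facet need not contain all of \(Q\). For \(q=4\) and facets \(\{a,b,x_1,x_2\}\), \(\{a,b,y_1,y_2\}\), \(\{a,z_1,z_2,z_3\}\), one has \(Q=\{a,b\}\) and \(p=7>6\). Only \(p\ge(q-m)|F|\) holds. Then \(\sum_{v\in Q}d_F(v)=q|F|-p\le m|F|\), so your step replacing this sum by \(m|F|\) goes in the wrong direction. The case is easily repaired. For \(r\le|F|\), the bound \(\min\{d,r\}\ge dr/|F|\) also holds for pendant vertices. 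Summing it over all of \(V\) gives \(D_r(F)\ge qr=s_r(F)\) directly.

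\paragraph{The case \(m=q-1\).} Here the charging scheme is never turned into an inequality. Its termwise matching of eigenvalues with conjugate-degree columns breaks down for the reason you note yourself: a core vertex may have degree below \(r\). The bound \(D_r(F)\ge qr\) from the first case is also too weak here. For the star \(F=\{Q\cup\{z_i\}:i=1,2,3\}\) and \(r=2\), one has \(s_2(F)=2q+1>2q\).

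What is missing is a single global comparison of both sides with \((q-1)r+n\), where \(n=|F|\).
\begin{itemize}
\item For \(r\ge n\), both sides equal \(qn\).
\item For \(r<n\), write \(b=|Z|\) and \(e=|H|\). Your spectrum gives \(s_r(F)=(q-1)r+b+\min\{e,r-1\}\le(q-1)r+n\) when \(b\ge1\). When \(b=0\), Lemma~\ref{lem:star} does not apply, but then \(M_F=qI_n\) and \(s_r(F)=qr\le(q-1)r+n\).
\item On the degree side, keep the pendant term in your vertexwise concavity bound. This gives \(D_r(F)\ge p+\frac{r}{n}(qn-p)=qr+p\bigl(1-\frac{r}{n}\bigr)\). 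Since every facet contains a pendant vertex, \(p\ge n\), and hence \(D_r(F)\ge(q-1)r+n\ge s_r(F)\).
\end{itemize}

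\paragraph{Comparison with the paper.} With \(p\ge(q-m)n\), your concavity route yields exactly the paper's bound \(D_r(F)\ge qn-m(n-r)\). The paper obtains it more directly from \(D_r(F)=qn-\sum_{v\in Q}(d_F(v)-r)_+\) and \((d_F(v)-r)_+\le n-r\), and then closes both subcases with one line each.
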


\begin{proof}
	For every $r\ge1$,
	\[
	D_r(F)=p+\sum_{v\in Q}\min\{d_F(v),r\}\le p+rm.
	\]
	It remains to prove $s_r(F)\le D_r(F)$.
	Put $n=|F|$. Since $\sum_v d_F(v)=qn$ and
	$\min\{a,r\}=a-(a-r)_+$, we have
	\[
	D_r(F)=qn-\sum_{v\in Q}(d_F(v)-r)_+.
	\]
	If $r\ge n$, then $s_r(F)=\tr M_F=qn=D_r(F)$. Hence assume $r<n$.
	Since $d_F(v)\le n$, $D_r(F)\ge qn-m(n-r).$
	If $m\le q-2$, every facet is heavy, so
	Lemma~\ref{lem:heavy-isolated} gives $M_F=qI_n$. Thus
	\[
	s_r(F)=qr\le qr+(q-m)(n-r)=qn-m(n-r)\le D_r(F).
	\]
	Now let $m=q-1$, and let $b$ and $e$ be the numbers of one-pendant
	and heavy facets, respectively, so that $n=b+e$. If $b=0$, then
	\[
	s_r(F)=qr=rm+r\le rm+n.
	\]
	If $b\ge1$, Lemmas~\ref{lem:star} and \ref{lem:heavy-isolated} give
	the eigenvalues $b+m$, $m+1$ with multiplicity $e$, and $m$ with
	multiplicity $b-1$. Hence
	\[
	s_r(F)
	=rm+b+\min\{e,r-1\}
	\le rm+n.
	\]
	In either case,
	\[
	s_r(F)\le rm+n=qn-m(n-r)\le D_r(F).
	\]
\end{proof}
	
We call a \(q\)-family \emph{low-core} if
	\(|Q(F)|<q\), and \emph{high-core} if \(|Q(F)|\ge q\).
We now determine the precise equality conditions for the regime $m < q$.	

	\begin{lemma}\label{lem:low-equality}
		Let \(F\subseteq\binom Vq\) be a finite \(q\)-family without isolated vertices, where \(q\ge2\). Assume its core \(Q=Q(F)\) has size \(m=|Q|<q\).
		\begin{enumerate}[label=\textup{(\roman*)}]
			\item If \(m\le q-2\), then \(s_2(F)=D_2(F)\) if and only if \(|F|\le2\).
			\item If \(m=q-1\), then \(s_2(F)=D_2(F)\) if and only if \(F=\{Q\cup\{z\}:z\in Z\}\) with \(|Z|\ge2\) and \(Q\cap Z=\varnothing\).
		\end{enumerate}
	\end{lemma}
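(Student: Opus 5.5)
Both parts come from the explicit spectra used in the proof of Lemma~\ref{lem:low-core-bounds}, specialised to \(r=2\), with the chain of inequalities there tracked for equality. Put \(n=|F|\). By \eqref{eq:R2-pq}, \(D_2(F)=p+2m\), and since no vertex is isolated, \(D_2(F)=qn-\sum_{v\in Q}(d_F(v)-2)_+\).

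\emph{Case (i), \(m\le q-2\).} Every facet is heavy, so Lemma~\ref{lem:heavy-isolated} gives \(M_F=qI_n\) and \(s_2(F)=q\min\{n,2\}\). If \(n\le2\), then \(s_2(F)=qn=\tr M_F\), and also \(D_2(F)=qn\) because every degree is at most \(n\le2\). So equality holds. If \(n\ge3\), then \(s_2(F)=2q\), while \(D_2(F)=p+2m\ge qn-m(n-2)\). Hence
\[
D_2(F)-s_2(F)\ge (q-m)(n-2)\ge 2(n-2)>0,
\]
and equality fails. This case is immediate.

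\emph{Case (ii), \(m=q-1\).} Every non-heavy facet contains all of \(Q\), so it is \(Q\cup\{z\}\) with \(z\) pendant. Let \(b\) be the number of such facets and \(e\) the number of heavy facets. Each core vertex lies in at least two facets, and a heavy facet has at most \(q-2\) core vertices. So if \(b\le1\), every core vertex needs a heavy facet, which forces \(e\ge1\).

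For \(b\ge1\), the proof of Lemma~\ref{lem:low-core-bounds} gives \(s_2(F)=2m+b+\min\{e,1\}\). For \(b=0\), \(s_2(F)=q\min\{n,2\}\).

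We compare this with the exact value \(D_2(F)=p+2m\). Pendant vertices are counted facet by facet: each one-pendant facet contributes \(1\) and each heavy facet \(\sigma\) contributes \(q-|\sigma\cap Q|\ge2\). Thus \(p=b+\sum_{\text{heavy}}(q-|\sigma\cap Q|)\ge b+2e\), and so
\[
D_2(F)-s_2(F)\ge b+2e-b-\min\{e,1\}>0 \quad\text{whenever } e\ge1,\ b\ge1.
\]
For \(b=0\), \(D_2(F)=p+2m\ge 2n+2(q-1)>s_2(F)\), so equality fails. When \(e=0\), we have \(b=n\ge2\) by the remark above, \(p=b\), and \(s_2(F)=2m+b=D_2(F)\). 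Equality therefore holds exactly when \(F=\{Q\cup\{z\}:z\in Z\}\) with \(|Z|\ge2\). The converse is the same computation via Lemma~\ref{lem:star}: the eigenvalues are \(|Z|+q-1\) and then \(q-1\), so \(s_2(F)=|Z|+2(q-1)=D_2(F)\).

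The main obstacle is the bookkeeping in case (ii): getting the precise count \(p\ge b+2e\) from the pendant vertices of heavy facets, and handling \(b\le1\), where the degree-\(\ge2\) condition on \(Q\) is what forces heavy facets to exist.
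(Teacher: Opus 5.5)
Your proof is correct and follows essentially the same route as the paper: $M_F=qI$ in case (i); in case (ii), the split into one-pendant facets $Q\cup\{z\}$ and heavy facets, the count $p\ge b+2e$, and the spectra from Lemmas~\ref{lem:star} and~\ref{lem:heavy-isolated}. The only differences are cosmetic: in case (i) you bound $D_2$ by $qn-m(n-2)$ instead of the paper's $p\ge |F|(q-m)$, and in the subcase $b=0$ your inequality $2n+2(q-1)>q\min\{n,2\}$ holds for every $n\ge1$, so you do not need the paper's observation that $e\ge2$.
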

\begin{proof}
	Put \(P=P(F)\) and \(p=|P|\). Then \(D_2(F)=p+2m\) by \eqref{eq:R2-pq}.
Suppose \(m\le q-2\). Lemma~\ref{lem:heavy-isolated} implies \(M_F=qI_{|F|}\), whence \(s_2(F) = q\min\{|F|,2\}\). If \(|F|\le 2\), the core is the common intersection of all facets, which forces \(p=|F|(q-m)\) and yields \(D_2(F)=|F|q=s_2(F)\). Conversely, if \(|F|\ge 3\), the lower bound \(p\ge|F|(q-m)\) ensures
	\[
	D_2(F) \ge |F|(q-m)+2m \ge 3q-m > 2q = s_2(F),
	\]
	ruling out equality.
Now suppose \(m=q-1\). Let \(b\) and \(e\) denote the number of one-pendant and heavy facets in \(F\), respectively. If \(e=0\), then \(F\) is a pure star over \(Q\) with \(b\ge 2\) to satisfy the core degree constraints, and Lemma~\ref{lem:star} immediately yields the equality \(s_2(F) = (b+q-1)+(q-1) = b+2q-2 = D_2(F)\).
	
	It remains to show strict inequality whenever \(e\ge 1\). Since each heavy facet contributes at least two pendant vertices, we have \(p\ge b+2e\). If \(b\ge 1\), Lemmas~\ref{lem:star} and \ref{lem:heavy-isolated} yield \(s_2(F)=b+2q-1\), whereas
	\[
	D_2(F) = p+2q-2 \ge b+2e+2q-2 \ge b+2q > s_2(F).
	\]
	If \(b=0\), Lemma~\ref{lem:heavy-isolated} gives \(M_F=qI_{|F|}\). The condition \(d_F(v)\ge 2\) for all \(v\in Q\) forces \(|F|=e\ge 2\), implying \(p\ge 2e\ge 4\). Consequently,
	\[
	s_2(F) = 2q < p+2q-2 = D_2(F),
	\]
	which completes the proof.
\end{proof}

Fix \(q\geq2\) and a finite set \(Q\) with \(m=|Q|\geq q\). Retain the
complete-simplex notation \(\calF_Q\), \(\calR_Q\), \(\bd_Q\), and
\(\scrH_Q\) introduced above. Let \(\calK\subseteq\calF_Q\), put
\(\calN=\calF_Q\setminus\calK\), and define
\[
\scrV_{\calK}
=
\{\mathbf x\in\scrH_Q:\supp(\mathbf x)\subseteq\calK\}.
\]
Thus \(\scrV_{\calK}\) is the subspace of the complete-simplex row
space supported on \(\calK\).
We next evaluate the exact dimension of $\scrV_{\calK}$ and record its structural implications for $\calK$.
\begin{lemma}\label{lem:support-rank}
The subspace \(\scrV_{\calK}\) has dimension
\[
\dim\scrV_{\calK}
=
\binom{m-1}{q-1}-\rank_{\mathbb R}\bd_{\calN}.
\]
Moreover, if \(\dim\scrV_{\calK}\geq2\), then every vertex of \(Q\)
belongs to at least two members of \(\calK\).
\end{lemma}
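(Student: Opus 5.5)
The dimension formula is a linear-algebra statement about the row space $\scrH_Q=\im\bd_Q^{\mathsf T}$, which Lemma~\ref{lem:complete-simplex} shows has dimension $\binom{m-1}{q-1}$. Write $\Pi_{\calN}$ for the coordinate projection of $\R^{\calF_Q}$ onto the coordinates indexed by $\calN$, and restrict it to $\scrH_Q$. The kernel of this restriction is exactly $\scrV_{\calK}$. A vector of $\scrH_Q$ lies in the kernel precisely when its $\calN$-coordinates vanish, that is, when its support lies in $\calK$. By rank–nullity,
\[
\dim\scrV_{\calK}=\binom{m-1}{q-1}-\dim\Pi_{\calN}(\scrH_Q).
\]
The image $\Pi_{\calN}(\scrH_Q)$ is the set of vectors $\bigl((\bd_Q^{\mathsf T}\mathbf z)_\sigma\bigr)_{\sigma\in\calN}=\bd_{\calN}^{\mathsf T}\mathbf z$ as $\mathbf z$ ranges over $\R^{\calR_Q}$. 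It is therefore the column space of $\bd_{\calN}^{\mathsf T}$, and its dimension is $\rank_{\R}\bd_{\calN}$. This proves the formula. The only point needing care is the identification of the restricted coordinate map with $\bd_{\calN}^{\mathsf T}$, which is immediate once rows and columns are consistently indexed.

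For the structural claim, fix $v\in Q$ and suppose, toward a contradiction, that $v$ lies in at most one member of $\calK$. I will show $\dim\scrV_{\calK}\le1$. The tool is a projection that is injective on $\scrH_Q$ and is supported on the facets containing $v$. Consider the map $\scrH_Q\to\R^{\{\sigma\in\calF_Q:\,v\in\sigma\}}$ given by restricting coordinates to facets containing $v$. I claim it is injective. The proof of Lemma~\ref{lem:complete-simplex} shows that the columns $\rho\cup\{v\}$ with $\rho\in\binom{Q\setminus\{v\}}{q-1}$ form a signed identity block against the rows indexed by those $\rho$. Hence these $h=\binom{m-1}{q-1}$ columns of $\bd_Q$ are linearly independent and span its column space, since the rank is $h$. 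Dually, if $\mathbf x=\bd_Q^{\mathsf T}\mathbf z\in\scrH_Q$ vanishes on all facets containing $v$, then $\mathbf x\perp\scrH_Q$. To see this, note that for any $\mathbf w$,
\[
\ip{\mathbf x}{\bd_Q^{\mathsf T}\mathbf w}=\ip{\bd_Q\mathbf x}{\mathbf w},
\]
and $\bd_Q\mathbf x$ is a combination of the spanning columns $\bd_Q e_{\rho\cup\{v\}}$ with coefficients $x_{\rho\cup\{v\}}=0$. So $\bd_Q\mathbf x=0$, which means $\mathbf x\in\Ker\bd_Q\cap\scrH_Q=0$.

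It follows that the restriction to facets containing $v$ is injective on $\scrH_Q$, hence injective on $\scrV_{\calK}$. Every vector of $\scrV_{\calK}$ is supported in $\calK$, so its restriction is supported on $\{\sigma\in\calK:v\in\sigma\}$, a set of size at most $1$. Therefore $\dim\scrV_{\calK}\le1$, contradicting $\dim\scrV_{\calK}\ge2$. The main obstacle is this injectivity step. The rest is bookkeeping.
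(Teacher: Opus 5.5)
Your dimension count is correct and matches the paper's argument. Coordinate restriction to $\calN$ composed with $\bd_Q^{\mathsf T}$ is $\bd_{\calN}^{\mathsf T}$, its kernel on $\scrH_Q$ is $\scrV_{\calK}$, and rank--nullity finishes.

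The structural part follows the paper's route: restriction to $\calF_v=\{\sigma\in\calF_Q:v\in\sigma\}$ is injective on $\scrH_Q$, so $\dim\scrV_{\calK}\le|\calK\cap\calF_v|$. However, your proof of that injectivity fails, and you yourself identify this step as the crux. The assertion that ``$\bd_Q\mathbf x$ is a combination of the spanning columns $\bd_Q e_{\rho\cup\{v\}}$ with coefficients $x_{\rho\cup\{v\}}$'' is false. By definition $\bd_Q\mathbf x=\sum_{\sigma\in\calF_Q}x_\sigma\,\bd_Q e_\sigma$, and the terms with $v\notin\sigma$ are still there. Knowing that the columns indexed by $\calF_v$ span $\im\bd_Q$ only lets you rewrite $\bd_Q\mathbf x$ with some other coefficients, not with $x_{\rho\cup\{v\}}$. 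This step never uses $\mathbf x\in\scrH_Q$. Applied to $\mathbf x=e_\tau$ with $v\notin\tau$, it would give $\bd_Q e_\tau=0$, which is absurd.

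The repair uses the ingredient you already have, applied to $\mathbf z$ instead of $\mathbf x$. Suppose $\mathbf x=\bd_Q^{\mathsf T}\mathbf z$ vanishes on $\calF_v$. Then $0=x_\sigma=\ip{\bd_Q e_\sigma}{\mathbf z}$ for every $\sigma\in\calF_v$. Since these columns span $\im\bd_Q$, the vector $\mathbf z$ is orthogonal to $\im\bd_Q$, i.e.\ $\mathbf z\in\Ker\bd_Q^{\mathsf T}$, so $\mathbf x=0$.

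Equivalently, as in the paper: restriction to $\calF_v$ composed with $\bd_Q^{\mathsf T}$ equals $\bd_{\calF_v}^{\mathsf T}$. By the signed-identity block, its rank is $|\calF_v|=\binom{m-1}{q-1}=\dim\scrH_Q$. Hence the restriction maps $\scrH_Q$ onto $\R^{\calF_v}$ and is bijective by dimension count. With this correction the rest of your argument goes through unchanged.
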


\begin{proof}
Put \(h=\binom{m-1}{q-1}\). Let \(P_{\calN}:\scrH_Q\to\mathbb R^{\calN}\) be coordinate
restriction. If \(\mathbf x=\bd_Q^{\mathsf T}\mathbf z\), then, for
every \(\sigma\in\calN\),
\[
(P_{\calN}\mathbf x)_\sigma
=(\bd_Q^{\mathsf T}\mathbf z)_\sigma
=\langle\bd_Qe_\sigma,\mathbf z\rangle
=(\bd_{\calN}^{\mathsf T}\mathbf z)_\sigma.
\]
Thus \(P_{\calN}\bd_Q^{\mathsf T}=\bd_{\calN}^{\mathsf T}\).
Since \(\scrH_Q=\im\bd_Q^{\mathsf T}\), it follows that
\(\im P_{\calN}=\im\bd_{\calN}^{\mathsf T}\). Moreover,
\(P_{\calN}\mathbf x=0\) exactly when
\(\supp(\mathbf x)\subseteq\calK\), so
\(\Ker P_{\calN}=\scrV_{\calK}\). Since
\(\dim\scrH_Q=h\) by Lemma~\ref{lem:complete-simplex}, rank--nullity
gives
\[
\dim\scrV_{\calK}
=h-\rank P_{\calN}
=h-\rank_{\mathbb R}\bd_{\calN}.
\]
Fix \(v\in Q\) and put $\calF_v=\{\sigma\in\calF_Q:v\in\sigma\}.$
Every member of \(\calF_v\) has the unique form
\(\rho\cup\{v\}\), where
\(\rho\in\binom{Q\setminus\{v\}}{q-1}\). In
\(\bd_{\calF_v}\), restrict to the rows indexed by these sets \(\rho\).
The resulting square matrix is a signed identity: the row indexed by
\(\rho\) has a nonzero entry in the column indexed by
\(\rho\cup\{v\}\) and zero entries in all other selected columns.
Therefore $\rank\bd_{\calF_v}=|\calF_v|=h.$

Let \(P_v:\scrH_Q\to\mathbb R^{\calF_v}\) be coordinate restriction.
Since \(\rank\bd_{\calF_v}=h=|\calF_v|\), the transpose
\(\bd_{\calF_v}^{\mathsf T}\) has full row rank. Hence, for every
\(\mathbf y\in\mathbb R^{\calF_v}\), there exists \(\mathbf z\) such
that \(\bd_{\calF_v}^{\mathsf T}\mathbf z=\mathbf y\). Setting
\(\mathbf x=\bd_Q^{\mathsf T}\mathbf z\in\scrH_Q\), we obtain
\(P_v\mathbf x=\mathbf y\). Thus \(P_v\) is surjective. Since
\(\dim\scrH_Q=\dim\mathbb R^{\calF_v}=h\), surjectivity also implies
injectivity.
Now let \(\mathbf x\in\scrV_{\calK}\). Since
\(\supp(\mathbf x)\subseteq\calK\), the restricted vector
\(P_v\mathbf x\) is supported on \(\calK\cap\calF_v\). Because \(P_v\)
is injective,
\[
\dim\scrV_{\calK}
=
\dim P_v(\scrV_{\calK})
\leq|\calK\cap\calF_v|.
\]
Consequently, if \(\dim\scrV_{\calK}\geq2\), then
\(|\calK\cap\calF_v|\geq2\). Since \(v\) was arbitrary, every vertex
of \(Q\) belongs to at least two members of \(\calK\).
\end{proof}
	
Equipped with the preceding dimension constraints and core properties, we now proceed to the proof of Theorem~\ref{thm:main-inequality}.
\begin{proof}[\textup{\textbf{Proof of Theorem~\ref{thm:main-inequality}}}]
The case \(q=1\), including the empty family, was proved in Section~\ref{sec:notation} and gives case~\textup{(i)}. Assume \(q\geq2\), delete isolated vertices, and put \(P=P(F)\), \(Q=Q(F)\), \(p=|P|\), and \(m=|Q|\). Then \(D_2(F)=p+2m\) by \eqref{eq:R2-pq}. If \(m<q\), Lemmas~\ref{lem:low-core-bounds} and \ref{lem:low-equality} prove the inequality and show that equality holds exactly in cases~\textup{(ii)} and~\textup{(iii)}.

Assume \(m\geq q\), and put \(\widehat F=F\cup\binom Qq\), \(t=|\calC|\), and \(h=\binom{m-1}{q-1}\). For \(\rho\in\calC\), define
\[
Z_\rho=\{z\in P:\rho\cup\{z\}\in F\},\qquad b_\rho=|Z_\rho|,\qquad
\boldsymbol{\ell}_\rho=b_\rho^{-1/2}\sum_{z\in Z_\rho}\varepsilon(\rho,\rho\cup\{z\})e_{\rho\cup\{z\}},
\]
and put \(p_1=\sum_{\rho\in\calC}b_\rho\) and \(\mathbf u_\rho=(\mathbf y_\rho,\sqrt{b_\rho}\,\boldsymbol{\ell}_\rho)\).  Recall that
\[
\mathbf y_\rho=\bd_Q^{\mathsf T}e_\rho,
\qquad
(\mathbf y_\rho)_\sigma=\varepsilon(\rho,\sigma)
\quad(\sigma\in\calF_Q).
\]
When \(\mathbf y_\rho\) is viewed as a vector indexed by \(\widehat F\),
we set its coordinates outside \(\calF_Q\) equal to zero. Every facet
in \(\widehat F\setminus F\) belongs to \(\binom Qq\), so it contains
only core vertices. Thus completion adds no new one-pendant facets, and
\(\calC,Z_\rho,b_\rho,p_1\), and
\(\boldsymbol{\ell}_\rho\) remain unchanged.
 Since \(p_1\) counts the pendant vertices lying in one-pendant facets, \(p_1\leq p\).
 Lemmas~\ref{lem:completion} and~\ref{lem:core-excess} give
\begin{equation}\label{eq:main-high-chain}
s_2(F)\leq s_2(\widehat F)\leq2m+\tr((T_{\widehat F})_+)
\leq2m+p_1\leq2m+p=D_2(F).
\end{equation}
This proves the inequality.

Suppose that \(s_2(F)=D_2(F)\). Equality then holds throughout \eqref{eq:main-high-chain}. Thus \(p_1=p\), so there are no heavy facets. Equality in the positive-excess bound and Lemma~\ref{lem:positive-excess-rigidity} imply that the vectors \(\mathbf y_\rho\), \(\rho\in\calC\), are pairwise orthogonal and that \(M_{\widehat F}\mathbf u_\rho=(m+b_\rho)\mathbf u_\rho\). By Lemma~\ref{lem:complete-simplex}, pairwise orthogonality is equivalent to \(|\rho\cap\gamma|\leq q-3\) for distinct \(\rho,\gamma\in\calC\). If \(t\geq3\), writing \(b_1\geq\cdots\geq b_t>0\), we would have
\[
s_2(\widehat F)=2m+b_1+b_2<2m+\sum_{i=1}^t b_i=2m+p_1,
\]
contrary to \eqref{eq:main-high-chain}. Hence \(t\leq2\).
The equality statement of
Lemma~\ref{lem:positive-excess-rigidity}, together with the orthogonal
decomposition of \(M_{\widehat F}\) established in the proof of
Lemma~\ref{lem:core-excess}, gives
\[
\scrE_{>m}
=
\Span\{\mathbf u_\rho:\rho\in\calC\},
\qquad
\scrE_{=m}
=
\scrH_Q\cap
\Span\{\mathbf y_\rho:\rho\in\calC\}^{\perp}.
\]
Since the \(t\) vectors \(\mathbf y_\rho\) are nonzero and pairwise
orthogonal in the \(h\)-dimensional space \(\scrH_Q\), their orthogonal
complement in \(\scrH_Q\), namely \(\scrE_{=m}\), has dimension \(h-t\).
The same orthogonal decomposition shows that all remaining eigenvalues
of \(M_{\widehat F}\) are \(q-1\) or \(0\).  If \(h=1\), pairwise orthogonality forces \(t\leq1\); hence the largest eigenvalue is at most \(m+p\), and every subsequent eigenvalue is at most \(q-1<m\). This would give
\[
s_2(\widehat F)\leq(m+p)+(q-1)<2m+p,
\]
again contradicting \eqref{eq:main-high-chain}. Therefore \(h\geq2\), equivalently \(m\geq q+1\).

Since \(s_2(F)=s_2(\widehat F)\), Lemma~\ref{lem:compression-equality} gives a maximizing two-dimensional subspace \(\scrU\) of \(M_{\widehat F}\) supported on \(F\). Lemma~\ref{lem:kyfan} and the displayed eigenspaces show that, if \(t=0\), then \(\scrU\subseteq\scrH_Q\); if \(t=1\), with \(\calC=\{\rho\}\), then \(\scrU=\Span\{\mathbf u_\rho,\mathbf w\}\) for some nonzero \(\mathbf w\in\scrH_Q\cap\mathbf y_\rho^\perp\); and if \(t=2\), with \(\calC=\{\rho,\gamma\}\), then \(\scrU=\Span\{\mathbf u_\rho,\mathbf u_\gamma\}\). Because \(\scrU\) is supported on \(F\), and every \(\boldsymbol{\ell}_\rho\) is already supported on \(F\), these three cases give
\[
\dim\scrV_{\calK}\geq2,\qquad
\supp(\mathbf y_\rho)\subseteq\calK\quad(\rho\in\calC).
\]
Lemmas~\ref{lem:support-rank} and \ref{lem:complete-simplex} give
\[
\rank_{\R}\bd_{\calN}
\leq\binom{m-1}{q-1}-2,
\qquad
\rho\cup\{x\}\in\calK
\quad(\rho\in\calC,\ x\in Q\setminus\rho).
\]
We have also proved that \(m\geq q+1\), \(t=|\calC|\leq2\), and
$|\rho\cap\gamma|\leq q-3\ (\rho,\gamma\in\calC,\ \rho\neq\gamma).$
Finally, the absence of heavy facets means that every facet contains at
most one pendant vertex. Thus all the conditions in
case~\textup{(iv)} are satisfied.

Conversely, cases~\textup{(i)}--\textup{(iii)} were settled above;
assume now that case~\textup{(iv)} holds.  Condition~\textup{(a)} and  Lemma~\ref{lem:support-rank}   give \(\dim\scrV_{\calK}\geq2\), while condition~\textup{(c)} and Lemma \ref{lem:complete-simplex} give \(\mathbf y_\rho\in\scrV_{\calK}\) for every \(\rho\in\calC\). Condition~\textup{(b)} and
Lemma~\ref{lem:complete-simplex} show that these vectors are pairwise
orthogonal. There are no heavy facets, so \(p_1=p\); moreover, \(t\leq2\) and \(h\geq2\). Lemma~\ref{lem:positive-excess-rigidity}, together with the orthogonal
decomposition of \(M_{\widehat F}\) established in the proof of
Lemma~\ref{lem:core-excess}, shows that the two largest eigenvalues of
\(M_{\widehat F}\) are \(m,m\) when \(t=0\),
\(m+b_\rho,m\) when \(t=1\), and
\(m+b_\rho,m+b_\gamma\) when \(t=2\).  Hence
\[
s_2(\widehat F)=2m+\sum_{\rho\in\calC}b_\rho=2m+p=D_2(F).
\]
It remains to recover this value after deleting the missing core facets. If \(t=0\), choose any two-dimensional \(\scrU\subseteq\scrV_{\calK}\). If \(t=1\), choose a nonzero \(\mathbf w\in\scrV_{\calK}\cap\mathbf y_\rho^\perp\) and set \(\scrU=\Span\{\mathbf u_\rho,\mathbf w\}\). If \(t=2\), set \(\scrU=\Span\{\mathbf u_\rho,\mathbf u_\gamma\}\). In every case, \(\scrU\) is a maximizing two-dimensional subspace of \(M_{\widehat F}\) supported on \(F\). Lemma~\ref{lem:compression-equality} therefore gives
\[
s_2(F)=s_2(\widehat F)=D_2(F).
\]
This completes the equality classification.
\end{proof}

	\appendix
	\section{Exact Verification of the Seed Matrices}
	\label{app:seed-verification}
	
	This appendix records the integer Gram matrices underlying
	Lemma~\ref{lem:seed-polynomials}.  It is included to make the two seed
	calculations directly reproducible.  The vertex order is
	\(0<1<\cdots<6\), the facets are ordered exactly as below, and every facet
	is given its increasing orientation.  Reversing facet orientations only
	conjugates the corresponding Gram matrix by a diagonal sign matrix, so it
	does not affect the characteristic polynomial.
	
	For \(\calF_{15}\), use the facet order
	\[
	023,025,026,035,036,123,125,126,135,136,
	234,235,236,256,345.
	\]
We have
\begingroup
\normalsize
\setlength{\arraycolsep}{1.7pt}
\[
M_{\calF_{15}}=
\left(\begin{array}{*{15}{c}}
3& 1& 1&-1&-1& 1& 0& 0& 0& 0& 1& 1& 1& 0& 0\\
		1& 3& 1& 1& 0& 0& 1& 0& 0& 0& 0&-1& 0& 1& 0\\
		1& 1& 3& 0& 1& 0& 0& 1& 0& 0& 0& 0&-1&-1& 0\\
		-1& 1& 0& 3& 1& 0& 0& 0& 1& 0& 0& 1& 0& 0&-1\\
		-1& 0& 1& 1& 3& 0& 0& 0& 0& 1& 0& 0& 1& 0& 0\\
		1& 0& 0& 0& 0& 3& 1& 1&-1&-1& 1& 1& 1& 0& 0\\
		0& 1& 0& 0& 0& 1& 3& 1& 1& 0& 0&-1& 0& 1& 0\\
		0& 0& 1& 0& 0& 1& 1& 3& 0& 1& 0& 0&-1&-1& 0\\
		0& 0& 0& 1& 0&-1& 1& 0& 3& 1& 0& 1& 0& 0&-1\\
		0& 0& 0& 0& 1&-1& 0& 1& 1& 3& 0& 0& 1& 0& 0\\
		1& 0& 0& 0& 0& 1& 0& 0& 0& 0& 3& 1& 1& 0& 1\\
		1&-1& 0& 1& 0& 1&-1& 0& 1& 0& 1& 3& 1&-1&-1\\
		1& 0&-1& 0& 1& 1& 0&-1& 0& 1& 1& 1& 3& 1& 0\\
		0& 1&-1& 0& 0& 0& 1&-1& 0& 0& 0&-1& 1& 3& 0\\
		0& 0& 0&-1& 0& 0& 0& 0&-1& 0& 1&-1& 0& 0& 3
\end{array}\right).
\]
\endgroup
	
	For \(\calF_{16}\), use the facet order
	\[
	012,013,015,026,123,124,125,126,
	134,135,136,145,146,156,236,256.
	\]
	Then
\begingroup
\normalsize
\setlength{\arraycolsep}{1.7pt}
\[
M_{\calF_{16}}=
\left(\begin{array}{*{16}{c}}
3& 1& 1&-1& 1& 1& 1& 1& 0& 0& 0& 0& 0& 0& 0& 0\\
		1& 3& 1& 0&-1& 0& 0& 0& 1& 1& 1& 0& 0& 0& 0& 0\\
		1& 1& 3& 0& 0& 0&-1& 0& 0&-1& 0&-1& 0& 1& 0& 0\\
		-1& 0& 0& 3& 0& 0& 0& 1& 0& 0& 0& 0& 0& 0&-1&-1\\
		1&-1& 0& 0& 3& 1& 1& 1&-1&-1&-1& 0& 0& 0& 1& 0\\
		1& 0& 0& 0& 1& 3& 1& 1& 1& 0& 0&-1&-1& 0& 0& 0\\
		1& 0&-1& 0& 1& 1& 3& 1& 0& 1& 0& 1& 0&-1& 0& 1\\
		1& 0& 0& 1& 1& 1& 1& 3& 0& 0& 1& 0& 1& 1&-1&-1\\
		0& 1& 0& 0&-1& 1& 0& 0& 3& 1& 1&-1&-1& 0& 0& 0\\
		0& 1&-1& 0&-1& 0& 1& 0& 1& 3& 1& 1& 0&-1& 0& 0\\
		0& 1& 0& 0&-1& 0& 0& 1& 1& 1& 3& 0& 1& 1& 1& 0\\
		0& 0&-1& 0& 0&-1& 1& 0&-1& 1& 0& 3& 1&-1& 0& 0\\
		0& 0& 0& 0& 0&-1& 0& 1&-1& 0& 1& 1& 3& 1& 0& 0\\
		0& 0& 1& 0& 0& 0&-1& 1& 0&-1& 1&-1& 1& 3& 0& 1\\
		0& 0& 0&-1& 1& 0& 0&-1& 0& 0& 1& 0& 0& 0& 3& 1\\
		0& 0& 0&-1& 0& 0& 1&-1& 0& 0& 0& 0& 0& 1& 1& 3
\end{array}\right).
\]
\endgroup

The characteristic polynomials are computed directly from the two
displayed integer matrices. Applying Bareiss fraction-free elimination \cite{Bareiss1968}
to \(xI-M_{\calF}\) over \(\mathbb Z[x]\) gives
\[
\begin{aligned}
\det(xI-M_{\calF_{16}})
&=x^2(x-1)^5(x-3)(x-7)^3p_{16}(x),\\
\det(xI-M_{\calF_{15}})
&=x^4(x-2)(x-4)^2(x-6)^2(x-7)p_{15}(x).
\end{aligned}
\]
All divisions in the Bareiss elimination are exact by the
Desnanot--Jacobi identity, and the pivots are nonzero because the
leading principal minors of \(xI-M_{\calF}\) are monic polynomials.
Thus the computation uses exact polynomial arithmetic and no numerical
eigenvalue approximation.

\newpage

{\bf Declaration on the use of AI}

During the preparation of this manuscript, we used ChatGPT and DeepSeek  for language editing,
which included grammar verification, stylistic polishing, and the improvement of expository clarity.

\end{document}